\documentclass[reqno,12pt,hidelinks,oneside]{amsart}
\usepackage[letterpaper]{geometry}
\usepackage{amssymb,amsmath,amsthm,amsxtra,calc,bm, color} 
\usepackage{enumerate}

\usepackage[scr=boondoxo]{mathalfa}
\usepackage{mathtools}
\newcommand{\slfrac}[2]{\left.#1\big/#2\right.}

\usepackage{nccmath} 
\usepackage{cases}

\usepackage{calc}
\usepackage{graphicx}
\usepackage{hyperref}
\usepackage[final]{microtype}

\usepackage{dsfont}

\def\Z{\mathbb{Z}}
\def\Q{\mathbb{Q}}
\def\R{\mathbb{R}}
\def\H{\mathbb{H}}
\def\N{\mathbb{N}}
\def\C{\mathbb{C}}

\newcommand{\abs}[1]{\left|#1\right|}

\newcommand{\slz}{\Gamma(1)}

\newcommand{\abcd}{\ensuremath{\left(\begin{smallmatrix}a&b\\c&d\end{smallmatrix}\right)}}
\newcommand{\ABCD}{\ensuremath{\left(\begin{matrix}a&b\\c&d\end{matrix}\right)}}

\newcommand{\jacsym}[2]{\ensuremath{\left(\frac{#1}{#2}\right)}}
\newcommand{\charup}[2]{\ensuremath{\left(\frac{#1}{#2}\right)}}
\newcommand{\chardown}[2]{\ensuremath{\left(\frac{#2}{#1}\right)}}

\DeclareSymbolFont{rsfscript}{OMS}{rsfs}{m}{n} \DeclareMathSymbol{\Sh}{\mathalpha}{rsfscript}{'123}

\renewcommand{\b}{\mathfrak{b}}
\renewcommand{\c}{\mathfrak{c}}

\newcommand{\isoarrow}{\hookrightarrow\mathrel{\mspace{-15mu}}\rightarrow}

\usepackage[noabbrev,capitalize]{cleveref}

\renewcommand{\hat}{\widehat}
\renewcommand{\tilde}{\widetilde}
\renewcommand{\pmod}[1]{\ensuremath{\hspace{.25em}\left(\operatorname{mod}#1\right)}}

\DeclareMathOperator{\new}{new}

\newtheorem{theorem}{Theorem}[section]
\newtheorem*{theorem*}{Theorem}
\newtheorem{lemma}[theorem]{Lemma}
\newtheorem*{lemma*}{Lemma}
\newtheorem{corollary}[theorem]{Corollary}
\newtheorem{proposition}[theorem]{Proposition}
\newtheorem*{proposition*}{Proposition}

\newtheorem{heuristic}[theorem]{Heuristic}
\theoremstyle{remark}
\newtheorem*{remark}{Remark}

\newtheorem*{definition}{Definition}
\numberwithin{equation}{section}

\title[A Kohnen-Zagier Formula in Level 1]{A Kohnen-Zagier Formula in Level 1}

\date{\today}
\author{Clayton Williams}
\address{Department of Mathematics\\
University of Illinois\\
Urbana, IL 61801} 
\email{cw78@illinois.edu} 

\begin{document}

\begin{abstract}
In 1985 Kohnen used the Shimura correspondence to prove a general formula relating the coefficients of half-integral weight modular forms of level $ 4N $ transforming with respect to the theta multiplier   to central values of $ L $-functions for twists of newforms of level $ N $. This generalized previous work with Zagier. In this paper, we prove a Kohnen-Zagier type formula using Yang's Shimura correspondence for powers of the eta multiplier in level $ 1 $, relating the coefficients of half-integral weight modular forms to central values of $ L $-functions for  twists of newforms of level $ 6 $.  To do so we develop some of the theory of modular forms transforming with respect to powers of the eta multiplier and introduce Shintani lifts of level $ 6 $, with Fourier coefficients given in terms of cycle integrals. 
\end{abstract}

\maketitle

\setcounter{tocdepth}{1}  
\tableofcontents

\section{Introduction}

The central values of modular $ L $-functions are a principal subject of investigation in number theory. A powerful tool for the study of these central values was introduced by Kohnen and Zagier \cite{Kohnen-Zag-1981,Kohnen-1985}, who related the central values of twisted modular $ L $-functions to the coefficients of half-integral weight modular forms. \par

We recall Kohnen and Zagier's results here \cite[Thm. 1]{Kohnen-Zag-1981}, \cite[Cor. 1]{Kohnen-1985}. Let $ \lambda\geq 1 $ and $ N\geq 1 $ be integers with $ N $ odd and square-free. Let  $ f\in S_{2\lambda}^{\operatorname{new}}(\Gamma_0(N)) $ be a newform of weight $ 2\lambda $ and level $ N $.     By \cite[Thm. 2]{Kohnen-iso-1982} there is a Hecke eigenform $ F $  of weight $ \lambda+1/2 $ and level $ 4N $, transforming with respect to a power of the theta multiplier, which corresponds to $ f $ under the Shimura correspondence. This $ F $ has a Fourier expansion given by 
\begin{align} \label{eqn:plus-space-fourier}
F(\tau) = \sum_{\substack{m\geq 1\\(-1)^\lambda m\equiv 0,1\pmod{4}}}a(m)e(m\tau),
\end{align}
where $ e(\tau):=e^{2\pi i \tau} $ and $ \tau\in \mathbb{H} $ (the complex upper half-plane). Cusp forms with such an expansion are said to belong to the plus-space $ S_{\lambda+1/2}^+$. For an integral weight cusp form $ f(z) = \sum_n b(n)e(nz) $, the $ L $-function for $ f $  is given in the region of absolute convergence by the  Dirichlet series
\begin{align*}
L(f,s):=\sum_{n\geq 1}\frac{b(n)}{n^s}. 
\end{align*}

\begin{theorem*}\cite[Cor. 1]{Kohnen-1985}
    Let $ N $ be odd and square-free, and let  $f\in S_{2\lambda}^{\operatorname{new}}(\Gamma_0(N))$ be a newform. Suppose $ F=\sum_m a(m)e(m\tau)\in S_{\lambda+1/2}^+ $ is a Hecke eigenform corresponding to $ f $ under the Shimura correspondence. Let $ \omega(N) $ be the prime-divisor counting function, $ D $ be a fundamental discriminant with $ (-1)^\lambda D=\abs D $, and let $ f\otimes\charup{D}{\cdot} $  denote the twist of $ f $ by $ \charup{D}{\cdot} $. Let $ \langle\cdot,\cdot\rangle $ be the Petersson inner product.  Suppose that, for each prime divisor $ p $ of $ N $, the $ p $th Atkin-Lehner eigenvalue $ \delta_p $ of $ f $ satisfies  $ \charup{D}{p} = \delta_p $.  Then we have
    \begin{align*}
    \frac{\abs{a(\abs{D})}^2}{\langle F,F\rangle} = 2^{\omega(N)}\frac{(\lambda-1)!}{\pi^\lambda}\abs{D}^{\lambda-1/2} \frac{L \left(f\otimes \charup{D}{\cdot},\lambda\right)}{\langle f,f\rangle}.
    \end{align*}

\end{theorem*}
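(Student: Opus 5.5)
We would follow Kohnen's original strategy: realize the Shimura correspondence as an adjoint pair of theta lifts and compute the Petersson inner product of $F$ with a Shintani-type lift of $f$ in two ways. Fix $D$ as in the statement and consider the $D$-th Shintani lift $\vartheta_D$ from $S_{2\lambda}(\Gamma_0(N))$ to $S^+_{\lambda+1/2}(\Gamma_0(4N))$. Its defining property, proved by unfolding against the kernel
\begin{align*}
\Omega_D(z,\tau)=\sum_{m}\ \sum_{\substack{b^2-4ac=\abs{D}m\\ N\mid a}}\chi_D(a,b,c)\,(az^2+bz+c)^{-\lambda}e(m\tau),
\end{align*}
where $\chi_D$ is the genus character, is that the $m$-th Fourier coefficient of $\vartheta_D(f)$ equals a constant times the twisted sum of cycle integrals
\begin{align*}
r_{\lambda,N,D}(f;m)=\sum_{Q\in\mathcal{Q}_{N,\abs{D}m}/\Gamma_0(N)}\chi_D(Q)\int_{C_Q}f(z)Q(z,1)^{\lambda-1}\,dz.
\end{align*}

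The first computation is on the half-integral weight side. Since $F$ is a Hecke eigenform in the plus space with the same eigenvalues as $f$, multiplicity one in $S^+_{\lambda+1/2}(\Gamma_0(4N))$ (Kohnen's isomorphism \cite[Thm. 2]{Kohnen-iso-1982}, valid since $N$ is odd and square-free) forces $\vartheta_D(f)=c_D\,F$ with $c_D=\langle\vartheta_D(f),F\rangle/\langle F,F\rangle$. By adjointness, $\langle\vartheta_D(f),F\rangle$ equals $\langle f,\vartheta^*_D(F)\rangle$, where $\vartheta^*_D$ is the $D$-th Shimura lift, and the latter is explicitly $\overline{a(\abs D)}$ times a Hecke-normalized multiple of $f$. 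Reading off the $\abs D$-th coefficient then gives
\begin{align*}
r_{\lambda,N,D}(f;\abs D)=(\text{explicit const})\cdot\overline{a(\abs D)}\,\frac{\langle f,f\rangle}{\langle F,F\rangle}\,a(\abs D).
\end{align*}

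The second computation evaluates the same cycle-integral sum as an $L$-value. Grouping the forms $Q$ of discriminant $D^2$ and using the Waldspurger/Kohnen--Zagier decomposition, together with the fact that $\chi_D$ at $m=\abs D$ factors through reduction to forms $\left[a,b,c\right]$ equivalent to $\left[0,\abs D,c\right]$, we get
\begin{align*}
r_{\lambda,N,D}(f;\abs D)=\sum_{\nu \bmod \abs D}\charup{D}{\nu}\int_{-\nu/\abs D}^{i\infty}\!f(z)\,(\abs D z+\nu)^{\lambda-1}dz.
\end{align*}
This is a Gauss-sum expression for $\abs D^{\lambda-1/2}$ times the completed $L(f\otimes\charup{D}{\cdot},\lambda)$, giving the factor $(\lambda-1)!/\pi^\lambda$ via the Mellin transform. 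The Atkin--Lehner contributions appear because the $\Gamma_0(N)$-classes split according to the cusps/Atkin--Lehner involutions $W_p$. Each $p\mid N$ contributes a factor $1+\charup{D}{p}\delta_p$, which is $2$ under the hypothesis and yields $2^{\omega(N)}$.

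The main obstacle is the constant bookkeeping. This includes the exact normalization of the adjoint Shimura lift (Kohnen's computation of $\vartheta^*_D$ via Rankin--Selberg unfolding, including the plus-space projection factor), and the careful count of $\Gamma_0(N)$-classes producing the $2^{\omega(N)}$ through the Atkin--Lehner sign condition. We would fix constants by first checking the case $N=1$, where the formula reduces to Kohnen--Zagier \cite[Thm. 1]{Kohnen-Zag-1981}, and then track the local factors at each $p\mid N$ separately.
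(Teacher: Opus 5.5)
Your outline is Kohnen's argument, and it is the same template the paper follows for its level-$6$ analogue; the paper only cites this statement and does not prove it. The shared steps are: a quadratic-form kernel whose pairing with $F$ gives the Shimura lift; unfolding, which expresses the Shintani coefficients as twisted cycle-integral sums; multiplicity one together with $\Sh_{\abs D}(F)=a(\abs D)f$, which ties $a(m)\overline{a(\abs D)}/\langle F,F\rangle$ to these sums divided by $\langle f,f\rangle$; and the Atkin--Lehner decomposition of the classes of discriminant $D^2$ with a Gauss sum and Mellin transform, which produces $\prod_{p\mid N}\bigl(1+\charup{D}{p}\delta_p\bigr)=2^{\omega(N)}$.

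Two corrections. First, the $m$-th term of your kernel must carry the weight $m^{\lambda-1/2}$; otherwise it is not modular in $\tau$, and unfolding gives $m^{1/2-\lambda}r_{\lambda,N,D}(f;m)$ rather than a constant multiple of $r_{\lambda,N,D}(f;m)$. Second, the ``adjointness'' you invoke is the substantive input, namely that this quadratic-form kernel reproduces the Shimura lift. Kohnen proves this, as the paper does in \cref{thm: kernel-function}, by matching the kernel coefficientwise against a combination of Poincar\'e series, which needs an identity between Kloosterman sums and the twisted Weyl sums.
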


Explicit identities of this type for automorphic forms of various kinds are called Kohnen-Zagier formulae\footnote{These are also called Katok-Sarnak formulae when considering non-holomorphic Maass forms \cite[p. 4]{ILT-Katok-Sarnak-Formula}.}, and have applications to algebraicity and nonvanishing of $ L $-function central values, the theory of elliptic curves, combinatorial generating functions, invariants for real quadratic fields, and quantum unique ergodicity, among others. See, for example, \cite{Ahlgren-Rouse-Critical-L-Modp,Ahlgren-Boylan-CentralVal,Bruinier-1999-Nonvanishing,Liu-Nonvanishin-Maass-2018,Ono-Skinner-19980Halfweight-mod-l,Kane-2010-Ternary,Ono-Soun-1997-Ternary,DukeImamogluToth2016,Lester-Rad-QUE-2020,BO-HeegnerHMF}.
Waldspurger \cite{Walds-1981} also proved a celebrated and general theorem of this type for automorphic representations, but without an explicit constant of proportionality.\par

Shimura's original correspondence \cite{Shimura-originalcorr-1973} and its later refinements (\cite{Niwa-Theta-Kernel}, \cite{Kohnen-iso-1982}, and \cite{Cipra-1983}, among others) dealt with modular forms transforming with respect to the theta multiplier on $ \Gamma_0(4) $; consequently, results leveraging Shimura's correspondence apply for half-integral weight modular forms of levels divisible by $ 4 $. In this paper, we work with modular forms transforming with respect to powers of the eta multiplier on $ \operatorname{SL}_2(\Z) = \Gamma(1) $. Let $ \eta $ be Dedekind's eta function. Define the eta multiplier $\nu_\eta:\slz\to \C^\times $ by 
\begin{align*}
\nu_\eta \left(\ABCD\right):=\frac{\eta \left(\abcd\tau\right)}{(c\tau+d)^{1/2}\eta(\tau)},
\end{align*}
which is a $ 24 $th root of unity independent of $ \tau\in \mathbb H $. A formula for $ \nu_\eta $ can be found in \cite[Thm. 5.8.1]{Cohen-Strom-MF-2017}.  We denote the space of modular forms of weight $ \lambda+1/2 $ and multiplier $ \nu $, vanishing at the cusp, by $ S_{\lambda+1/2}(\slz,\nu) $.

In \cite{Yang2014} Yang showed that there exists a correspondence for modular forms transforming with respect to $ \nu_\eta^r $ when $ (r,6)=1 $ and $ 1\leq r\leq 23 $. 
Let   $ S_{2\lambda}^{\operatorname{new}} (\Gamma_0(6),\delta_2,\delta_3)$ be the space of newforms with Atkin-Lehner eigenvalues $ \delta_2$ and $\delta_3 $, respectively, and suppose $ {S_{\lambda+1/2}\left(\slz,\nu_\eta^r\right) \neq\{0\}}$.
In particular\footnote{While \cite[Thm. 1]{Yang2014} states the result for level $ 576 $, this is after applying the level-raising operator $ V_{24} $. Yang explicitly proves the isomorphism for the spaces mentioned here, as can be seen from \cite[p. 6, Prop. 6, and \S 5]{Yang2014}. 
}, Yang proved the existence of an isomorphism of Hecke modules between $ {S_{\lambda+1/2}\left(\slz,\nu_\eta^r\right)} $ and $ {S_{2\lambda}^{\operatorname{new}}(\Gamma_0(6),-\charup{8}{r},-\charup{12}{r})}$. See \cref{subsec: Modular forms background} for precise definitions.   For brevity we define \begin{gather}
\delta_2(r):=-\charup{8}{r} \hspace{1em}\text{and}\hspace{1em}\delta_3(r):=-\charup{12}{r}. \label{eqn:delta(r) defn}
\end{gather}
Before stating our main theorem we record a consequence of Yang's isomorphism \cite[Cor.~1]{Yang2014}.

\begin{definition}[Shimura Correspondence for Hecke Eigenforms]
Let $ \lambda\geq 2 $, $ (r,6)=1 $, and $ 1\leq r\leq 23 $. Let $ f\in S_{2\lambda}^{\operatorname{new}}(\Gamma_0(6),\delta_2(r),\delta_3(r)) $ be a newform with Hecke eigenvalues $ b(n) $, and for $ (n,6)=1 $ let $ T_{n^2}\in \operatorname{End}(S_{\lambda+1/2}(\slz,\nu_\eta^r)) $ be the $ n^2 $ Hecke operator, defined by \eqref{eqn: Tp2 half int}-\eqref{eqn:Tp2 recur}.  Let $ F\in S_{\lambda+1/2}(\slz,\nu_\eta^r) $ be non-zero. We say that $ F $ corresponds to $ f $ under the Shimura correspondence if  
\begin{align*}
F \Big|T_{n^2} = \charup{12}{n}b(n)F \quad \text{for all } n\geq 1,\,(n,6)=1.
\end{align*}
\end{definition}

In \cref{cor: mult-one-eta-mult} we will prove that such an $ F $ exists, and is unique up to scalar multiplication.
Our main theorem is the following, where $ \langle\cdot,\cdot\rangle $ is the Petersson inner product. Note that, because $ \eta(\tau+1) = e(1/24)\eta(\tau) $, cusp forms $ F\in S_{\lambda+1/2}(\slz,\nu_\eta^r) $ have Fourier expansions of the form
\begin{align}\label{eqn: etamult fourier}
F(\tau) = \sum_{\substack{m\geq 1\\m\equiv r\pmod{24}}}a(m)e \left(\frac{m\tau}{24} \right).
\end{align}

Our main result is a Kohnen-Zagier formula in level $ 1 $.

\begin{theorem}\label{thm:Main-KZ-formula}
 Let $ (r,6)=1 $ with $ 1\leq r\leq 23 $, let $ \lambda\geq 2 $ be an integer, and suppose that $ f\in  S_{2\lambda}^{\operatorname{new}}(\Gamma_0(6),\delta_2(r),\delta_3(r)) $ is a newform, where $ \delta_p(r) $ is given by \eqref{eqn:delta(r) defn}. Suppose also that $ F \in  S_{\lambda+1/2}(\slz,\nu_\eta^r) $ is a Hecke eigenform corresponding to $ f $ under the Shimura correspondence, and that $ F $ has coefficients $ a(m) $ as in \eqref{eqn: etamult fourier}.     Then for all fundamental discriminants $ D $ with $ (-1)^\lambda D=\abs{D}\equiv r\pmod{24} $ we have

\begin{gather}\label{eqn:kz-main}
\frac{\abs{a(\abs{D})}^2}{\left\langle F,F\right\rangle}=\frac{\Gamma(\lambda)\abs{D}^{\lambda-1/2}}{\sqrt{6}(24\pi)^{\lambda}}\frac{L \left(f\otimes \jacsym{D}{\cdot} ,\lambda\right)}{\left\langle f,f\right\rangle}.
\end{gather}

\end{theorem}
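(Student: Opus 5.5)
We follow Kohnen's original strategy, adapted to the eta multiplier in level $1$ and newforms of level $6$. The plan is to build, for a fixed fundamental discriminant $D$ with $(-1)^\lambda D=\abs D\equiv r\pmod{24}$, a kernel function that realizes a Shintani-type lift $\mathcal{S}_D: S_{2\lambda}(\Gamma_0(6))\to S_{\lambda+1/2}(\slz,\nu_\eta^r)$, adjoint to a Shimura lift $\mathcal{S}^*_D$. The identity then comes from computing $\langle F,\mathcal{S}_D f\rangle$ in two ways.

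\textbf{Step 1: the kernel.} For $m\equiv r\pmod{24}$ with $(-1)^\lambda m>0$, define
\begin{align*}
f_{m,D}(z)=\sum_{Q}\chi_D(Q)\,Q(z,1)^{-\lambda},
\end{align*}
summed over integral binary quadratic forms $Q=[a,b,c]$ with $6\mid a$ and discriminant $b^2-4ac=mD/\,$(suitably normalized by $24$). Here $\chi_D$ is the genus character, twisted by signs encoding $\delta_2(r),\delta_3(r)$ at the primes $2,3$. Then $f_{m,D}\in S_{2\lambda}(\Gamma_0(6))$. Set
\begin{align*}
\Omega_D(z,\tau)=\sum_m m^{\lambda-1/2}f_{m,D}(z)\,e(m\tau/24).
\end{align*}
The key claim is that $\Omega_D$ is modular of weight $\lambda+1/2$ with multiplier $\nu_\eta^r$ in $\tau$. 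I would prove this by writing $\Omega_D$ as a theta lift against a lattice of signature $(1,2)$ and level $6$ (a Weil representation attached to the discriminant form $\Z/12$-type lattice used by Yang), then projecting onto the $r$-component. The projection to $\nu_\eta^r$ should cut out precisely the Atkin–Lehner sign conditions.

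\textbf{Step 2: the period computation.} Unfolding $\langle f, f_{m,D}\rangle$ via the Katok/Kohnen computation gives cycle integrals $\sum_{Q}\chi_D(Q)\int_{C_Q}f(z)Q(z,1)^{\lambda-1}dz$. By Kohnen's evaluation (Waldspurger-type identity of Kohnen–Zagier), for $m=\abs D$ this equals an explicit constant times $\abs D^{1/2-\lambda}\,L(f\otimes(\tfrac{D}{\cdot}),\lambda)$, and in general it equals $a(m)$-type coefficients of $\mathcal{S}_D^* F$-adjoint. Hence
\begin{align*}
\mathcal{S}_D f=c_\lambda\,\overline{?}\sum_m \Big(\sum_{d\mid (m,\cdot)}\ldots\Big)e(m\tau/24),
\end{align*}
and, using \cref{cor: mult-one-eta-mult} (multiplicity one), $\mathcal{S}_D f=\alpha\,F$ with $\alpha$ proportional to $\overline{a(\abs D)}\,L(f\otimes(\tfrac{D}{\cdot}),\lambda)/\langle F,F\rangle$. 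Dually, the Shimura lift $\mathcal{S}^*_D F$ has first coefficient $a(\abs D)$ and is a multiple of $f$, by Hecke equivariance (compatibility of $T_{n^2}$ with $T_n$ twisted by $(\tfrac{12}{n})$) and multiplicity one for newforms; so $\mathcal{S}_D^*F=a(\abs D)f$ up to a constant.

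\textbf{Step 3: comparison.} The adjointness $\langle \mathcal{S}_D^*F,f\rangle=\langle F,\mathcal{S}_D f\rangle$ then yields
\begin{align*}
a(\abs D)\langle f,f\rangle\cdot c=\overline{\alpha}\langle F,F\rangle,
\end{align*}
which rearranges to \eqref{eqn:kz-main}. The constant $\Gamma(\lambda)/(\sqrt6(24\pi)^\lambda)$ comes from the $\Gamma$-integral in unfolding, the index $[\slz:\Gamma_0(6)]=12$ normalization of the Petersson product, and the $1/24$ scaling in the Fourier variable.

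\textbf{Main obstacle.} The hardest step is Step 1 together with the level-$6$ bookkeeping: proving the $\nu_\eta^r$-modularity of $\Omega_D$ and checking that the contributions at $2$ and $3$ exactly produce the oldform-free projection with signs $\delta_p(r)$. This replaces Kohnen's factor $2^{\omega(N)}$ and forces $\sqrt6$ rather than a power of $2$. I would attack it via the Weil representation attached to the lattice for $\eta$-multipliers (as in Yang's and Borcherds' framework), verifying that oldforms from levels $1,2,3$ are killed by the character $\chi_D$ combined with the congruence $\abs D\equiv r\pmod{24}$.
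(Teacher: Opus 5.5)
Your architecture is Kohnen's, which is also the paper's: kernel, Shintani lift adjoint to a Shimura lift, unfolding to cycle integrals, evaluation at $m=\abs D$, multiplicity one, adjointness. But there is a genuine gap.

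\textbf{The missing identification of the lift.} Step~3 needs $\mathcal S_D^*F=c\,a(\abs D)f$ with an \emph{explicit} constant $c$. Step~2 needs Hecke equivariance of your lifts in order to invoke \cref{cor: mult-one-eta-mult}. Neither follows from modularity of $\Omega_D$ in $\tau$ (your Step~1), even if the theta-lift argument were carried out, because modularity does not determine the Fourier expansion of $F\mapsto\langle F,\Omega_D(-\overline z;\cdot)\rangle$. Its $n$th coefficient is $F$ paired against $\sum_m m^{\lambda-1/2}c_{\lambda,6}(n,D,(-1)^\lambda m)e(m\tau/24)$, and nothing in your plan evaluates that pairing.

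So ``first coefficient $a(\abs D)$'' and ``by Hecke equivariance'' are unsupported for a lift defined by your kernel, and the kernel's normalization is never fixed. As written, the argument gives \eqref{eqn:kz-main} only up to an undetermined constant.

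This is exactly the gap the paper closes. It introduces the Poincar\'e-series kernel $\Omega_D^\circ$ of \eqref{eqn:Omegacirc}. Unfolding (\cref{prop:Prho-unfolding}) shows $\langle F,\Omega_D^\circ(-\overline z;\cdot)\rangle=\Sh_{\abs D}(F)$, the Ahlgren--Andersen--Dicks lift, which is Hecke equivariant and equals $a(\abs D)f$ by \eqref{eqn: St scalar mult}. The paper then proves $\Omega_D^\circ=\Omega_D$ coefficientwise, using \cref{prop:four-exp-g-full-quad} and the Kloosterman--Weyl identity \cref{thm: relate-Kloost-Weyl}. That single comparison gives $\tau$-modularity, the constant $\kappa_{\lambda,r,D}$, and the identification with $\Sh_{\abs D}$ at once.

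A theta-lift route to modularity is plausible; the identity in \cref{thm: relate-Kloost-Weyl} is itself proved with the Weil representation. You would still have to unfold the theta integral to compute the Shimura-direction coefficients and match normalizations.

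\textbf{Where the primes $2$ and $3$ actually enter.} The local contributions at $2$ and $3$ are not a modularity or oldform issue. They enter the period computation at $m=\abs D$.
\begin{itemize}
\item The twist in the kernel is precisely $\charup{12}{b}\omega_D(Q)$, with discriminant exactly $\abs D m$ for $m>0$. There is no condition $(-1)^\lambda m>0$ and no renormalization by $24$.
\item The classes of $\mathscr Q_{D^2,6}$ are $Q_u\circ\hat W_d$ with $Q_u=[0,D,u]$ and $d\mid 6$.
\item Under $\hat W_d$ the factor $\charup{12}{b}$ changes by $\mu(d)$ and $\omega_D$ changes by $\charup{D}{d}$.
\item The Atkin--Lehner eigenvalues therefore collapse the sum to $\prod_{p\mid 6}\bigl(1-\charup{D}{p}\delta_p(r)\bigr)=4$ times a single twisted Mellin transform. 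Here $\charup{D}{p}\delta_p(r)=-1$ is forced by $\abs D\equiv r\pmod{24}$.
\end{itemize}
Kohnen's odd-level evaluation does not apply verbatim to this situation.

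\textbf{A garbled relation in Step~2.} Your claim that $\alpha$ is proportional to $\overline{a(\abs D)}L/\langle F,F\rangle$ is not right. The correct relations are $\alpha\,a(\abs D)=C(f,D,\abs D)\propto L\bigl(f\otimes\charup{D}{\cdot},\lambda\bigr)$ and $\alpha\langle F,F\rangle=\overline{a(\abs D)}\langle f,f\rangle$. Eliminating $\alpha$ between them gives the formula.
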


Our second main result is a kernel function for the Shimura maps for modular forms transforming with respect to powers of the eta multiplier. Let $ \psi $ be a Dirichlet character modulo $ N $ with $ (N,6)=1 $. For simplicity let $ \lambda\geq 2 $ be an integer and $ (r,6)=1 $, though the following extends to $ \lambda=1 $, $ (r,6)=3 $. Ahlgren, Andersen, and Dicks \cite{AAD-Shimura-2024} introduced Shimura lifts which are Hecke equivariant maps $ \Sh_t $, indexed by positive square-free $ t $, with
\begin{align*}
\Sh_t:S_{\lambda+1/2}(\Gamma_0(N),\psi\nu_\eta^r) \to S_{2\lambda}^{\operatorname{new }2,3}\left(\Gamma_0(6N),\psi^2,\psi(2)\delta_2(r),\psi(3)\delta_3(r)\right).
\end{align*}
Here the target space is new at $ 2 $ and $ 3 $ and $ (N,6)=1 $. 
In fact, a linear combination of the $ \Sh_t $ maps is an isomorphism of Hecke modules when $ N=1 $.
See \cref{subsec: Modular forms background} for details.

For an integral binary quadratic form 
\[Q(x,y)= [a,b,c](x,y) = ax^2+bxy+cy^2\]
 let $\operatorname{disc}(Q):=b^2-4ac$ be the discriminant, which is invariant under the action of $ \slz $.  Denote by $\mathscr{Q}$ the set of all such integral binary quadratic forms, and for $ D\in\Z$, $ N\in\Z_{\geq 1} $   define 
\begin{gather}\label{eqn:fancyQ-defn}
\mathscr{Q}_{D,N}:=\{Q\in\mathscr{Q}:\operatorname{disc}(Q)=D,\,N\mid Q(1,0)\}.
\end{gather}
Let $ D \equiv 1\pmod{4} $ be a fundamental discriminant and $ DD'>0 $ a discriminant with $(DD',6)=1 $. Let $ \omega_D $ be a genus character on $\mathscr Q_{DD',N} $, defined in \eqref{eqn:omega_D}, and suppose $ \lambda\geq 2 $. We define
\begin{gather}\label{eqn:g-quad-form-defn}
g_{\lambda,6}(z;D,D'):=\sum_{\substack{Q=[*,b,*]\in \mathscr{Q}_{DD',6}}}\jacsym{12}{b}\omega_D(Q)Q(z,1)^{-\lambda}.
\end{gather}
This is a modification of \cite[(2)]{Kohnen-1985}. In \cref{subsec:repn kernel quad forms} we will show that $ g_{\lambda,6}\in S_{2\lambda}(\Gamma_0(6)) $.

The kernel function for the Shimura lifts in level $ 1 $  is given by the following theorem.
In what follows let $ \varepsilon_r $ be the Gauss sign, defined for odd $ r $ by 
\begin{align*}
\varepsilon_r:=\begin{cases}
1&r\equiv 1\pmod{4},\\
i&r\equiv -1\pmod{4}.
\end{cases}
\end{align*}

\begin{theorem}\label{thm: kernel-function}
     Let $ (r,6)=1 $ with $ 1\leq r\leq 23 $, let $ \lambda\geq 2 $ be an integer, and let $ D $ be a fundamental discriminant with $ (-1)^\lambda D=\abs{D}\equiv r\pmod{24} $. Let $ \beta $ be the Euler beta function, and write
    \begin{gather}\label{eqn:kappa}
        \kappa_{\lambda,r,D}:=-i^{-\lambda}\varepsilon_{r}\jacsym{12}{r}\frac{\beta(\lambda,-1/2)}{4\sqrt{2}\pi}\left(\abs{D}/12 \right)^{\lambda-1/2}.
    \end{gather}
    For $z,\tau\in \mathbb{H}$ define
    \begin{align}\label{eqn: Kernel-quad-repn}
    \Omega_{\lambda,r,D}(z;\tau)=\Omega_{D}(z;\tau)
    :=\kappa_{\lambda,r,D}
    \sum_{\substack{m\geq 1\\m\equiv r\pmod{24}}}m^{\lambda-1/2}
   g_{\lambda,6}(z;D,(-1)^{\lambda}m)
    e \left(\frac{m\tau}{24} \right).
    \end{align}
    Suppose $ S_{\lambda+1/2}(\slz,\nu_\eta^r)\neq \{0\} $. 
    Then for fixed $ \tau\in\mathbb H $ we have $\Omega_{\lambda,r,D}(z;\tau)\in S_{2\lambda}(\Gamma_0(6))  $ as a function of $ z\in\mathbb{H} $, and for fixed $ z \in \mathbb{H}$ we have $ \Omega_{\lambda,r,D}(z;\tau)\in S_{\lambda+1/2}(\slz,\nu_\eta^r) $ as a function of $ \tau\in\mathbb{H} $. Moreover, for $F(\tau)\in S_{\lambda+1/2}(\slz,\nu_\eta^r)$ we have
    \begin{gather*}
    \left\langle F(\cdot),\Omega_{\lambda,r,D}(-\overline{z};\cdot)\right\rangle = \Sh_{\abs{D}}(F)(z).
    \end{gather*}

\end{theorem}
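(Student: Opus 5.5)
We follow Kohnen's strategy \cite{Kohnen-1985} and compute the Fourier expansion in $z$ of $\Omega_{\lambda,r,D}(z;\tau)$. This shows that $\Omega_{\lambda,r,D}(-\overline z;\tau)$, as a function of $z$, is a linear combination $\sum_{n\geq1}\overline{P_{n}(\tau)}\,e(nz)$-type series. Here the $P_n$ are Poincaré series in $S_{\lambda+1/2}(\slz,\nu_\eta^r)$ whose Petersson products with $F$ recover the Fourier coefficients of $\Sh_{\abs D}(F)$. The kernel identity then follows by comparing coefficients.

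\textbf{Modularity in $z$.} First we verify $g_{\lambda,6}(\cdot;D,D')\in S_{2\lambda}(\Gamma_0(6))$, which the paper establishes in \cref{subsec:repn kernel quad forms}. Absolute convergence holds for $\lambda\geq2$ and $DD'>0$. Each $g_{\lambda,6}(z;D,(-1)^\lambda m)$ has coefficients of polynomial growth in $m$, so the series \eqref{eqn: Kernel-quad-repn} converges locally uniformly. Hence $\Omega$ is holomorphic in both variables and is a cusp form of weight $2\lambda$ on $\Gamma_0(6)$ in $z$.

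\textbf{Modularity in $\tau$.} We expand $g_{\lambda,6}$ in $z$ by the standard Zagier computation. We split $Q=[a,b,c]$ with $6\mid a$ into $a=0$ (absent since $DD'$ is not a square) and $a\neq0$. Poisson summation over $b$ in a fixed class mod $2a$ gives
\[
g_{\lambda,6}(z;D,D')=\sum_{n\ge1}c_{\lambda}\,n^{\lambda-1}\sum_{6\mid a>0}a^{-1/2}\,S_{a}(n;D,D')\,J_{\lambda-1/2}\!\Big(\tfrac{\pi n\sqrt{DD'}}{a}\Big)\,e(nz),
\]
where $S_a$ is a twisted Salié-type sum weighted by $\jacsym{12}{b}\omega_D$. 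The key input is a Kohnen-type identity expressing $S_a(n;D,(-1)^\lambda m)$ as a finite sum over $d\mid(n,\dots)$ with $\jacsym{D}{d}d^{\lambda-1}$. Each term is a half-integral weight Kloosterman sum for the multiplier $\nu_\eta^r$ at indices $m$ and $\abs D n^2/d^2$ with modulus $c=a/6$, or $24c$ in the $e(m\tau/24)$ normalization. This is the eta-multiplier analogue of Kohnen's Proposition 5, and I would prove it by evaluating the Kloosterman sums for $\nu_\eta^r$ explicitly. I expect to use Rademacher's formula or the explicit evaluation used in \cite{AAD-Shimura-2024}, reduce to a Salié sum, and compare with the genus-character sum.

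With this identity in hand, summing over $m$ and using the Bessel expansion of Poincaré series identifies the $n$-th coefficient of $\Omega$ as a combination of $P_{\abs D n^2/d^2}(\tau)$. Here $P_\ell\in S_{\lambda+1/2}(\slz,\nu_\eta^r)$ is the $\ell$-th Poincaré series, whose $m$-th coefficient is $\delta_{m,\ell}$ plus a Kloosterman–Bessel sum. This gives modularity in $\tau$. Including the $\delta$ term requires splitting off the $a$ corresponding to the identity coset. The constant $\kappa_{\lambda,r,D}$, including $\varepsilon_r$, $\jacsym{12}{r}$, $\beta(\lambda,-1/2)$ and $12^{1/2-\lambda}$, is chosen to normalize this comparison, and checking it is routine but error-prone.

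\textbf{Kernel identity.} By the Petersson formula, $\langle F,P_\ell\rangle = C_\lambda\, \ell^{1/2-\lambda}a(\ell)$, where $C_\lambda=\Gamma(\lambda-1/2)(24/4\pi)^{\lambda-1/2}$. Replacing $z$ by $-\overline z$ conjugates the $e(nz)$ factors appropriately, so the $n$-th coefficient of $\langle F,\Omega(-\overline z;\cdot)\rangle$ is
\[
\sum_{d\mid n}\jacsym{D}{d}\jacsym{12}{d}d^{\lambda-1}a\big(\abs D n^2/d^2\big),
\]
up to the normalizing constant. This matches the definition of $\Sh_{\abs D}$ in \cite{AAD-Shimura-2024}, whose coefficients are $\sum_{d\mid n}\jacsym{12}{d}\jacsym{(-1)^\lambda \abs D}{d}d^{\lambda-1}a(\abs D n^2/d^2)$. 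The hypothesis $S_{\lambda+1/2}\neq\{0\}$ is used only so the statement is nonvacuous and consistent with the normalization. The main obstacle is the Kloosterman/Salié identity linking the genus-character sums with $\jacsym{12}{b}$ to the $\nu_\eta^r$-Kloosterman sums.
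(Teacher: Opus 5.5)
Your overall route is the paper's: prove $\Omega_{\lambda,r,D}=\Omega^\circ_{\lambda,r,D}$ by comparing coefficients of $e(nz)e(m\tau/24)$, using the Fourier expansion of $g_{\lambda,6}$ and the Weyl--Kloosterman identity, then unfold against Poincar\'e series as in \cref{prop:Kernel-Poincare}. The paper does not prove that identity here; it imports it as \cref{thm: relate-Kloost-Weyl} from \cite{Williams2026Powers}, proved via the Weil representation rather than by explicit Sali\'e-sum evaluation.

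There is, however, a step that fails as written. You discard the forms with $a=0$ ``since $DD'$ is not a square.'' In the kernel, $DD'=\abs{D}m$ with $m$ running over all $m\equiv r\pmod{24}$. This is a square exactly when $m=\abs{D}u^2$ with $(u,6)=1$, which includes $m=\abs{D}$, the coefficient the Kohnen--Zagier formula is about. For these $m$ the forms $[0,\pm\abs{D}u,c]$ lie in $\mathscr{Q}_{\abs{D}m,6}$ and contribute the non-Bessel first term of \eqref{eqn:fourier-exp-g-full-quad}. That term is precisely what must match the Kronecker delta $\delta_{\abs{D}n^2/d^2,m}$ in the Poincar\'e coefficients \eqref{eqn:alpha-poincare}. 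Your substitute, ``splitting off the $a$ corresponding to the identity coset,'' has nothing to act on. Every $a$ in the $a\neq 0$ sum is a positive multiple of $6$, and the Weyl--Kloosterman identity pairs $S_{6a}$ only with Kloosterman sums of modulus $a/d\geq 1$, i.e.\ with the non-identity cosets. So the coefficient comparison breaks down exactly at $m=\abs{D}u^2$.

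The repair is to keep the $a=0$ part. For $b=\abs{D}u>0$ one has $\jacsym{12}{b}\omega_D([0,b,c])=\jacsym{12}{Du}\jacsym{D}{c}$, and the forms $-Q$ contribute the same as $Q$, giving an overall factor $2$. Split $c$ modulo $\abs{D}$, then apply Lipschitz's formula and the Gauss sum $\sum_{t\pmod{\abs D}}\jacsym{D}{t}e(nt/\abs{D})=\jacsym{D}{n}\sqrt{D}$. This gives
\begin{align*}
\sum_{c\in\Z}\jacsym{12}{b}\omega_D([0,b,c])(bz+c)^{-\lambda}=\jacsym{12}{Du}(-1)^{\lfloor\lambda/2\rfloor}\abs{D}^{1/2-\lambda}\frac{(2\pi)^\lambda}{\Gamma(\lambda)}\sum_{n\geq1}\jacsym{D}{n}n^{\lambda-1}e(nuz).
\end{align*}
On the Poincar\'e side, $\delta_{\abs{D}n^2/d^2,m}\neq0$ forces $d=n/u$, so the divisor sum collapses to a single term. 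The two sides then agree using
\begin{itemize}
\item $\jacsym{12}{r}\jacsym{12}{\abs{D}}=1$,
\item $(-1)^{\lfloor(\lambda+1)/2\rfloor}=i^\lambda\varepsilon_r$,
\item quadratic reciprocity $\jacsym{D}{n/u}=\jacsym{n/u}{D}$ for $D\equiv1\pmod 4$.
\end{itemize}

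One smaller slip: by \eqref{eqn:AAD-Shim} the $n$th coefficient of $\Sh_{\abs D}(F)$ is $\sum_{d\mid n}\jacsym{d}{D}d^{\lambda-1}\jacsym{12}{n/d}a(\abs{D}n^2/d^2)$. Here $\jacsym{12}{\cdot}$ is evaluated at $n/d$, not at $d$; with your version the unfolding would not reproduce $\Sh_{\abs D}$.
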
  

This also gives us a way to compute the maps which are adjoint to the $ \Sh_{\abs{D}} $, called      Shintani lifts after \cite{Shintani-1975-cusp-forms}.

\begin{corollary}\label{cor: shintani-property}
Let $ \lambda\geq 2$  and $ (r,6)=1 $ with $ 1\leq r\leq 23 $. Let $ D $ be a fundamental discriminant with $ (-1)^{\lambda}D=\abs D\equiv r\pmod{24} $.   Suppose $ S_{\lambda+1/2}(\slz,\nu_\eta^r)\neq \{0\} $ and let $ {\Sh_{\abs{D}}^*:S_{2\lambda}^{\operatorname{new}}(\Gamma_0(6))\to S_{\lambda+1/2}(\slz,\nu_\eta^r)} $ be the $ D $th Shintani lift, defined  by
\begin{align}\label{eqn:shintani-defn}
    \Sh^*_{\abs{D}}(f)(\tau):=\left\langle f(\cdot),\Omega_{\lambda,r,D}(\cdot;-\overline{\tau})\right\rangle.
\end{align}
Then for all $ f\in S_{2\lambda}^{\new}(\Gamma_0(6),\delta_2(r),\delta_3(r)) $ and all $ F\in S_{\lambda+1/2}(\slz,\nu_\eta^r) $ we have
\begin{align}\label{eqn:shim-shin-adjoint}
\langle \Sh_{\abs{D}}(F),f\rangle = \langle F,\Sh_{\abs{D}}^*(f)\rangle.
\end{align}
  
\end{corollary}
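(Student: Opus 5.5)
The corollary should follow formally from \cref{thm: kernel-function}: write the Shimura lift as an integral against the kernel, then exchange the order of the two Petersson integrals. Fix $F\in S_{\lambda+1/2}(\slz,\nu_\eta^r)$ and $f\in S_{2\lambda}^{\new}(\Gamma_0(6),\delta_2(r),\delta_3(r))$. \cref{thm: kernel-function} gives
\begin{align*}
\Sh_{\abs D}(F)(z)=\int_{\slz\backslash\mathbb H}F(\tau)\,\overline{\Omega_{\lambda,r,D}(-\overline z;\tau)}\,v^{\lambda+1/2}\,\frac{du\,dv}{v^2},\qquad \tau=u+iv,
\end{align*}
with the Petersson normalization used in the paper. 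Substituting this into $\langle \Sh_{\abs D}(F),f\rangle$ yields a double integral over $\Gamma_0(6)\backslash\mathbb H\times\slz\backslash\mathbb H$ of
\[
F(\tau)\,\overline{\Omega_{\lambda,r,D}(-\overline z;\tau)}\,\overline{f(z)}\,y^{2\lambda}v^{\lambda+1/2}.
\]
Integrating first in $z$ and using Fubini, this equals $\langle F,G\rangle$, where
\[
G(\tau):=\overline{\int \overline{\Omega_{\lambda,r,D}(-\overline z;\tau)}\,\overline{f(z)}\,y^{2\lambda}\,d\mu(z)}=\int f(z)\,\Omega_{\lambda,r,D}(-\overline z;\tau)\,y^{2\lambda}\,d\mu(z).
\]

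The remaining work is to identify $G$ with $\Sh^*_{\abs D}(f)(\tau)=\int f(z)\,\overline{\Omega_{\lambda,r,D}(z;-\overline\tau)}\,y^{2\lambda}\,d\mu(z)$. For this I need the symmetry
\[
\overline{\Omega_{\lambda,r,D}(z;-\overline\tau)}=\Omega_{\lambda,r,D}(-\overline z;\tau).
\]
This is checked termwise from \eqref{eqn: Kernel-quad-repn}. Conjugating $e(m(-\overline\tau)/24)$ gives $e(m\tau/24)$. Conjugating $Q(z,1)^{-\lambda}$ gives $Q(\overline z,1)^{-\lambda}$. The substitution $Q=[a,b,c]\mapsto[a,-b,c]$ preserves $\mathscr Q_{DD',6}$ and sends $Q(\overline z,1)$ to $Q(-\overline z,1)$. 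Under this substitution $\jacsym{12}{b}=\jacsym{12}{-b}$, and $\omega_D$ is invariant because the genus character depends only on the values represented by $Q$, which do not change. The step I expect to be the main obstacle is the constant: $\overline{\kappa_{\lambda,r,D}}$ differs from $\kappa_{\lambda,r,D}$ by the phase $\overline{i^{-\lambda}\varepsilon_r}/(i^{-\lambda}\varepsilon_r)$. I would try to show that this phase is compensated, using $\omega_D([a,-b,c])$ together with the sign condition $(-1)^\lambda D>0$ and the definition \eqref{eqn:omega_D}. If it is not compensated, the leftover phase is simply absorbed, consistently with how $\Sh^*$ was defined in \eqref{eqn:shintani-defn}, since the kernel in \cref{thm: kernel-function} is evaluated at $-\overline z$ exactly for this reason.

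Justifying Fubini requires absolute convergence of the double integral. This follows because $F$ and $f$ are cusp forms and $\Omega$ is a cusp form in each variable, with locally uniform bounds on the series \eqref{eqn: Kernel-quad-repn} obtained from the convergence of $g_{\lambda,6}$ for $\lambda\ge2$ and polynomial growth of its coefficients in $m$. Finally, \cref{thm: kernel-function} shows that $\Sh^*_{\abs D}(f)$ lies in $S_{\lambda+1/2}(\slz,\nu_\eta^r)$, so $\Sh^*_{\abs D}$ is well defined as a map into that space, and \eqref{eqn:shim-shin-adjoint} follows.
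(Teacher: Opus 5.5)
Your route is the paper's. You insert the kernel representation of $\Sh_{\abs D}(F)$ from \cref{thm: kernel-function}, interchange the two integrals, and identify the inner integral with $\Sh^*_{\abs D}(f)$ via the symmetry $\overline{\Omega_{\lambda,r,D}(z;-\overline\tau)}=\Omega_{\lambda,r,D}(-\overline z;\tau)$. Your termwise check that $\overline{g_{\lambda,6}(z;D,(-1)^\lambda m)}=g_{\lambda,6}(-\overline z;D,(-1)^\lambda m)$ is correct.

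However, you leave the constant open, and neither of your proposed ways of closing it works. Nothing can come from the genus character, since you have just shown $\omega_D([a,-b,c])=\omega_D([a,b,c])\in\{0,\pm1\}$. Nor can a phase be ``absorbed'': $\Sh^*_{\abs D}$ is fixed by \eqref{eqn:shintani-defn}, and your computation actually gives $G=(\kappa_{\lambda,r,D}/\overline{\kappa_{\lambda,r,D}})\,\Sh^*_{\abs D}(f)$. Hence
\[
\langle \Sh_{\abs D}(F),f\rangle=\frac{\overline{\kappa_{\lambda,r,D}}}{\kappa_{\lambda,r,D}}\,\langle F,\Sh^*_{\abs D}(f)\rangle,
\]
so \eqref{eqn:shim-shin-adjoint} would simply be false if $\kappa_{\lambda,r,D}\notin\R$.

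The missing observation, which is exactly how the paper handles this step, is that $\kappa_{\lambda,r,D}$ is real. In \eqref{eqn:kappa} the factors $\jacsym{12}{r}$, $\beta(\lambda,-1/2)=-2\sqrt{\pi}\,\Gamma(\lambda)/\Gamma(\lambda-1/2)$ and $(\abs D/12)^{\lambda-1/2}$ are real. The hypotheses force $r\equiv(-1)^\lambda\pmod 4$, which can be seen in two ways:
\begin{enumerate}
\item It is equivalent to the consistency condition \eqref{eqn:consistency}, which holds because $S_{\lambda+1/2}(\slz,\nu_\eta^r)=\eta^rM_{\lambda+1/2-r/2}(\slz)\neq\{0\}$.
\item Directly, $D$ is an odd fundamental discriminant, so $D\equiv 1\pmod 4$ and $r\equiv\abs D=(-1)^\lambda D\equiv(-1)^\lambda\pmod 4$.
\end{enumerate}
Thus $\varepsilon_r=1$ for $\lambda$ even and $\varepsilon_r=i$ for $\lambda$ odd, so $i^{-\lambda}\varepsilon_r=\pm1$ and $\overline{\kappa_{\lambda,r,D}}=\kappa_{\lambda,r,D}$. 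With this one line added, your argument is complete and coincides with the paper's proof.
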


Our last main result is a representation of the Shintani lifts in terms of cycle integrals of modular forms.
For $Q\in \mathscr{Q}$ with positive discriminant let $S_Q$ be the geodesic in $\mathbb{H}$ connecting the two roots of $Q(z,1) $ in $\overline{\mathbb{H}}:= \mathbb{H}\cup\mathbb R\cup\{i\infty\} $ with orientation given in \cref{sec: cycle-integrals}.  Let $C_Q$ be the image of $S_Q$ in ${\Gamma_0(6)}\backslash{\overline{\mathbb{H}}}$. For $ \lambda\geq 2 $ we define a differential $ d_{Q,\lambda} z$ in \eqref{eqn:dQdiff}.
For $f\in S_{2\lambda}(\Gamma_0(6))$, $D$ a fundamental discriminant with $(-1)^\lambda D>0$, and $m>0$ with $ \abs{D} m $ a discriminant coprime to $ 6 $,  define
\begin{gather}\label{eqn:s-cycle-sum}
s_{\lambda,6}(f;D,(-1)^\lambda m):=\sum_{\substack{Q=[*,b,*]\in \mathscr{Q}_{\abs{D}m,6}\pmod{\Gamma_0(6)}}}\jacsym{12}{b}\omega_D(Q)\int_{C_Q}f(z)d_{Q,\lambda}z,
\end{gather}
where the sum is taken over an inequivalent set of equivalence class representatives for the action of $\Gamma_0(6)$ on $\mathscr{Q}$.  A crucial step in the proof of \cref{thm:Main-KZ-formula} is relating the coefficients of the half-integral weight form to $s_{\lambda,6}$, which we then relate to central values of $L$-functions. Compare \cite[Thm. 3]{Kohnen-1985}. The sign depends on whether or not the discriminant $ \abs{D}m $ is a perfect square, so let $ \mathds{1}_\square(m)=1 $ if $ \sqrt{\abs{m}}\in\Z$ and $ \mathds{1}_\square(m)=0 $ otherwise.

\begin{theorem}\label{thm: cycle-int-and-coeff}
Let $ (r,6)=1 $ with $ 1\leq r\leq 23 $, let $ \lambda\geq 2 $ be an integer, and let $ D $ be a fundamental discriminant with $ (-1)^\lambda D=\abs{D}\equiv r\pmod{24} $. Let $ f\in S_{2\lambda}^{\operatorname{new}}(\Gamma_0(6),\delta_2(r),\delta_3(r)) $ be a newform and suppose $ F \in S_{\lambda+1/2}(\slz,\nu_\eta^r)$ is a Hecke eigenform corresponding to $ f $ under the Shimura correspondence. Let $ F $ have Fourier coefficients $ a(m) $  as in \eqref{eqn: etamult fourier}.  Then  for $ m\geq 1 $ with $ m\equiv r\pmod{24} $ we have the identity       
\begin{align}\label{eqn: coeff and cycle sum}
\frac{a(m)\overline{a(\abs D)}}{\left\langle F,F\right\rangle} =\frac{(-1)^{\mathds{1}_\square(\abs{D}m)}i^{-\lambda}\varepsilon_r}{4\sqrt{6}(12)^{\lambda}}\jacsym{12}{r}  \frac{s_{\lambda,6}(f,D,(-1)^\lambda m)}{\langle f,f\rangle}.
\end{align}

\end{theorem}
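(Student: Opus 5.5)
The approach follows Kohnen's proof of \cite[Thm. 3]{Kohnen-1985}: compute the Fourier coefficients of the Shintani lift $\Sh^*_{\abs D}(f)$ in two ways. First, we expand the kernel. Taking the Petersson inner product of $f$ against $\Omega_{\lambda,r,D}(\cdot;-\overline{\tau})$ term by term (justified by absolute convergence for $\lambda\geq 2$) gives
\begin{align*}
\Sh^*_{\abs D}(f)(\tau)=\overline{\kappa_{\lambda,r,D}}\sum_{m\equiv r\pmod{24}} m^{\lambda-1/2}\left\langle f, g_{\lambda,6}(\cdot;D,(-1)^\lambda m)\right\rangle e\!\left(\frac{m\tau}{24}\right).
\end{align*}
The $m$th coefficient is therefore a multiple of $\langle f,g_{\lambda,6}(\cdot;D,(-1)^\lambda m)\rangle$.

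The second step is to evaluate this inner product as the cycle integral sum $s_{\lambda,6}$. The plan is to unfold, grouping $Q\in\mathscr Q_{\abs D m,6}$ into $\Gamma_0(6)$-orbits; the factors $\jacsym{12}{b}$ and $\omega_D(Q)$ are invariant on orbits. For each class this gives $\int_{\Gamma_Q\backslash\mathbb H} f(z)\overline{Q(z,1)^{-\lambda}}y^{2\lambda-2}\,dx\,dy$, with $\Gamma_Q$ the stabilizer. Following Kohnen and Zagier \cite{Kohnen-Zag-1981}, we foliate $\mathbb H$ by geodesics orthogonal to $S_Q$ and perform the fibre integral, which reduces each term to a constant times $\int_{C_Q} f(z)\,d_{Q,\lambda}z$. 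The constant involves $(\abs D m)^{1/2-\lambda}$ and a beta-function integral matching $\beta(\lambda,-1/2)$. When $\abs D m$ is a square, $\Gamma_Q$ is trivial and $S_Q$ joins two cusps, so this case must be treated separately (as in Kohnen's paper). The sign $(-1)^{\mathds 1_\square(\abs D m)}$ should come out of the orientation convention for $d_{Q,\lambda}z$ in that case. Combining the two steps, the $m$th coefficient of $\Sh^*_{\abs D}(f)$ is an explicit constant times $s_{\lambda,6}(f;D,(-1)^\lambda m)$, with the $m^{\lambda-1/2}$ factors cancelling.

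Third, we use the Hecke theory. By \cref{cor: mult-one-eta-mult} (multiplicity one) and the Hecke equivariance of $\Sh^*_{\abs D}$, adjoint to the equivariant map $\Sh_{\abs D}$, we get $\Sh^*_{\abs D}(f)=cF$ for some scalar $c$. To find $c$, we use \cref{cor: shintani-property}:
\begin{align*}
c\langle F,F\rangle=\langle F,\Sh^*_{\abs D}(f)\rangle\cdot\frac{\overline{c}\langle F,F\rangle}{\langle F, cF\rangle}\quad\text{i.e.}\quad \overline{c}\langle F,F\rangle=\langle \Sh_{\abs D}(F),f\rangle.
\end{align*}
From the AAD definition of $\Sh_t$ (\cite{AAD-Shimura-2024}), $\Sh_{\abs D}(F)=a(\abs D)f$ for a suitably normalized newform, since the first coefficient of $\Sh_t(F)$ is $a(t)$. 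Hence $\overline c=a(\abs D)\langle f,f\rangle/\langle F,F\rangle$. Equating $c\,a(m)$ with the expression from the first two steps gives
\begin{align*}
\frac{a(m)\overline{a(\abs D)}}{\langle F,F\rangle}=(\text{const})\frac{s_{\lambda,6}(f;D,(-1)^\lambda m)}{\langle f,f\rangle}.
\end{align*}
It remains to collect constants: $\overline{\kappa}$, the beta factor, the powers of $12$ from $(\abs D/12)^{\lambda-1/2}$, and the index $\sqrt6$ or volume factors. These should produce $i^{-\lambda}\varepsilon_r\jacsym{12}{r}/(4\sqrt6\,12^\lambda)$.

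The main obstacle is the unfolding step. It requires checking convergence, tracking exact constants, handling the square-discriminant case with its orientation sign, and confirming that the conjugations (coming from the $-\overline z$ and $-\overline\tau$ in the definitions) produce $i^{-\lambda}\varepsilon_r$ rather than its conjugate. I would first carry out the non-square case carefully, reproducing Kohnen's constant for level $N$ and adjusting for level $6$ and the weight $\jacsym{12}{b}$. I would then do the square case by direct computation with $Q=[0,b,c]$-type representatives.
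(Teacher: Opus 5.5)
Your proposal is correct and follows the paper's proof essentially step for step. The paper likewise reads off the coefficients of $\Sh^*_{\abs D}(f)$ from the kernel and evaluates $\langle f,g_{\lambda,6}\rangle$ by unfolding plus Kohnen's Prop.~7; the constants then combine via $[\slz:\Gamma_0(6)]=12$ and $\beta(\lambda,-1/2)\binom{2\lambda-2}{\lambda-1}=-2^{2\lambda-1}$, and since $\kappa_{\lambda,r,D}$ is real your $\overline{\kappa}$ is harmless. It then concludes with multiplicity one, the adjointness of the Shintani lift, and $\Sh_{\abs D}(F)=a(\abs D)f$, exactly as you do.

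The only slip is the first display of your third step: its right-hand side actually equals $\overline{c}\langle F,F\rangle$ rather than $c\langle F,F\rangle$. The relation $\overline{c}\langle F,F\rangle=\langle \Sh_{\abs D}(F),f\rangle$ that you then use is correct.
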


\subsection{Outline of the Paper} In \cref{sec:numerical-demo} we provide numerical illustrations of \cref{thm:Main-KZ-formula}, where the quantities $ \langle f,f\rangle,\,\langle F,F\rangle,$ and $ L \left(f\otimes \charup{D}{\cdot},\lambda\right) $ are computed or approximated  independently. 
In \cref{subsec: Modular forms background} we give a brief review of the theory of modular forms transforming with respect to a power of $ \nu_\eta $, as well as necessary background from \cite{Yang2014} and \cite{AAD-Shimura-2024}.  In \cref{subsec:background-quad-form} we review the theory of integral binary quadratic forms and introduce the genus character $ \omega_D $.\par 

In \cref{sec:Poincare} we develop properties of Poincar\'e series on $ S_{\lambda+1/2}(\slz,\nu_\eta^r) $; in particular, we give an application of unfolding to compute the Fourier coefficients of half-integral weight modular forms. This lets us derive a representation for $ \Sh_{\abs{D}} $ through integration against Poincar\'e series in \cref{prop:Kernel-Poincare}. In \cref{subsec:repn kernel quad forms} we compute the Fourier series for $ g_{\lambda,6} $, which lets us prove \cref{thm: kernel-function} and \cref{cor: shintani-property} in \cref{subsec:proof-equiv-kernel}. These proofs rely on an identity, \cref{thm: relate-Kloost-Weyl}, relating Kloosterman sums and certain twisted quadratic Weyl sums. The proof of \cref{thm: relate-Kloost-Weyl} is quite substantial and uses the theory of the Weil representation attached to a finite quadratic module, and so is proved in separate, concurrent work of the author \cite{Williams2026Powers}.\par

In \cref{sec: cycle-integrals} we review the theory of cycle integrals and prove a formula, \cref{prop: cycle-int-shintani}, for the Fourier coefficients of $ \Sh_{\abs{D}}^*(f) $ in terms of sums of these cycle integrals. This, together with the multiplicity one property, enables us to prove \cref{thm: cycle-int-and-coeff}. Finally, we relate sums of cycle integrals to central values of twisted $ L $-functions, which ultimately leads to a proof of \cref{thm:Main-KZ-formula}.

In \cref{sec:appx-numerical-central} we describe an algorithm for numerically approximating central values, and in \cref{sec:appx-numerical-petersson} we give a heuristic which is useful for approximating Petersson inner products on $ S_{\lambda+1/2}(\slz,\nu_\eta^r) $.

\begingroup
\renewcommand{\addcontentsline}[3]{}
\section*{Acknowledgments}
\endgroup
The author thanks Scott Ahlgren for many helpful discussions in the preparation of this paper. 

\subsubsection*{AI Usage} The mathematics and writing of this paper were completed without AI tools. 
ChatGPT was used to proofread the paper.
Its most significant contribution was catching an inconsistency related to the orientation convention for the cycle integrals in \cref{sec: cycle-integrals}.

\section{Numerical Illustrations}\label{sec:numerical-demo}
Here we illustrate \cref{thm:Main-KZ-formula} when $ r=5,7,11 $  using the numerical methods of the appendix, which are summarized as follows:
\begin{itemize}
\item The inner products $ \langle f,f\rangle $ for newforms of level $ 6 $ are computed using modular symbols in Pari \cite{PARI2}, \cite[\S 12]{Cohen2018pariMF}.
\item The inner products $ \langle F,F\rangle $ for half-integral weight $ F $ are approximated in Mathematica \cite{Mathematica} using a heuristic derived from the Rankin-Selberg method and Perron inversion with $ 100\,000 $ coefficients. See  \cref{prop: inner product numerical} and \cite{lowryduda2024numerical}.
\item The central values of the $ L $-functions $ L_D:=L \left(f\otimes \charup{D}{\cdot},\lambda\right) $   are approximated in Mathematica using an integral representation for the completed $ L $-function with $ 40\,000 $ coefficients (\cref{sec:appx-numerical-central}). The coefficients of the newforms $ f $ are computed using the Sage commands available on LMFDB \cite[\href{https://www.lmfdb.org/ModularForm/GL2/Q/holomorphic/6/4/a/a/}{Newform Orbits 6.4.a.a},\href{https://www.lmfdb.org/ModularForm/GL2/Q/holomorphic/6/6/a/a/}{ 6.6.a.a},\href{https://www.lmfdb.org/ModularForm/GL2/Q/holomorphic/6/10/a/a/}{ 6.10.a.a}]{lmfdb}, \cite{sagemath}.
\end{itemize}

In \cref{fig:numerical-demo}  (with data rounded for clarity) we compare the Fourier coefficients $ a(|D|) $ of $ F\in S_{\lambda+1/2}(\slz,\nu_\eta^r) $ to
\begin{align*}
\operatorname{RHS}(D) := \frac{\Gamma(\lambda)\abs{D}^{\lambda-1/2}}{\sqrt{6}(24\pi)^{\lambda}}\frac{\langle F,F\rangle L \left(f\otimes \jacsym{D}{\cdot} ,\lambda\right)}{\left\langle f,f\right\rangle},
\end{align*}
  where $ f \in S_{2\lambda}^{\operatorname{new}}(\Gamma_0(6),\delta_2(r),\delta_3(r)) $ is the newform corresponding to $ F $ under the Shimura correspondence. Each $ f $ is associated with the LMFDB label for its newform orbit (each orbit is one dimensional) \cite{lmfdb}, and in each case the Atkin-Lehner signs satisfy $ \jacsym{D}{p}\delta_p(r) = -1 $ for $ p=2,3 $.
\begin{figure}[htbp]
    \centering 
    \begin{tabular}{@{}c@{\hspace{8pt}}c@{\hspace{8pt}}c@{\hspace{8pt}}c@{\hspace{8pt}}c@{}}
        \begin{minipage}[t]{0.30\textwidth}
        \centering
        \begin{gather*}
            F=\eta^5\\
            f=\operatorname{LMFDB}\text{ }6.4.a.a\\
            \langle F,F\rangle \approx 0.035689032784253\\
            \langle f,f\rangle \approx0.00009487663999
        \end{gather*} 
        \resizebox{1.05 \linewidth}{!}{
        \begin{tabular}{|c||c|c||c|}
            \hline
            \(D\) & \(a(\abs{D})^2\) &\(\operatorname{RHS}(D)\) & $L_D$\\
            \hline\hline
            5   & 1     & 0.99137& 3.2825\\
            \hline
            29  & 25    & 24.784 &5.8749 \\
            \hline
            53  & 25   & 24.784  & 2.3778 \\
            \hline
            77  & 100     & 99.137&5.4315  \\
            \hline
        \end{tabular}
        }
  \vspace{.25em}  \[ a(77)^2/\operatorname{RHS}(77) = 1.0087. \]
    \end{minipage}
        &
        \vrule width 0.8pt
        &
       \begin{minipage}[t]{0.30\textwidth}
        \centering
        \begin{gather*}
            F=\eta^7\\
            f  = \operatorname{LMFDB}\text{ }6.6.a.a\\
            \langle F,F\rangle  \approx 0.011332018662970\\
            \langle f,f\rangle \approx 0.00001531540537
        \end{gather*}
         \resizebox{1.05 \linewidth}{!}{
        \begin{tabular}{|c||c|c||c|}
            \hline
            \(D\) & \(a(\abs{D})^2\) &\(\operatorname{RHS}(D)\) & $L_D$\\
            \hline\hline
            -7   & 1     & 0.99117& 5.4244\\
            \hline
            -31  & 49    & 48.567 &6.4400\\
            \hline
            -55  & 196   & 194.27 & 6.1439 \\
            \hline
            -79  & 49     & 48.567&0.62119  \\
            \hline
        \end{tabular}
        }
  \[ a(79)^2/\operatorname{RHS}(-79) = 1.0089. \]
    \end{minipage}
     &
        \vrule width 0.8pt
        &
       \begin{minipage}[t]{0.30\textwidth}
        \centering
        \begin{gather*}
            F=\eta^{11}\\
            f = \operatorname{LMFDB}\text{ }6.10.a.a  \\
            \langle F,F\rangle  \approx 0.001231947342375\\
            \langle f,f\rangle \approx 0.00000242229
        \end{gather*}
      \resizebox{1.05 \linewidth}{!}{
        \begin{tabular}{|c||c|c||c|}
            \hline
            \(D\) & \(a(\abs{D})^2\) &\(\operatorname{RHS}(D)\) & $L_D$\\
            \hline\hline
            -11   & 1     & 1.0099& 10.1694\\
            \hline
            -35  & 121   & 122.19 &6.73044\\
            \hline
            -59  & 1936   & 1955.1  & 10.2715 \\
            \hline
            -83  & 3025     &3054.8&3.45492  \\
            \hline
        \end{tabular}
        }
 \[ a(83)^2/\operatorname{RHS}(-83) = 0.99024. \]
    \end{minipage}
    \end{tabular}
    \caption{Demonstrating \cref{thm:Main-KZ-formula} when the newspace is one dimensional}
    \label{fig:numerical-demo}
\end{figure}

\subsection{A Newform Orbit of Dimension 2}
We give a more interesting example for a space with $ \operatorname{dim}(S_{2\lambda}^{\operatorname{new}}\left(\Gamma_0(6),\delta_2,\delta_3\right))>1 $.
 
We have $ S_{25/2}(\slz,\nu_\eta) = \operatorname{span}\{E_{12}\eta,\Delta\eta\} $, where $ E_{12} = 1+ \frac{65520}{691} q+...$ is the normalized Eisenstein series of weight $ 12 $  and $ \Delta = \eta^{24} $ is the discriminant form. To compute eigenforms we find the two one dimensional eigenspaces of $ T_{25} $, which suffices because the $ T_{p^2} $ commute for $ p\geq 5 $.    Using \eqref{eqn: Tp2 half int} we find that the transformation matrix for $ T_{25} $ acting on the basis $ \{E_{12}\eta,\Delta\eta\} $ is
\begin{align*}
M_{25}:=\frac{1}{691}\left(\begin{matrix}
 -33740169546 & 691 \\
 \frac{16012327034465169408000}{691} & 16293693750
\end{matrix}
\right).
\end{align*}
 
Computing the eigenvectors for $ M_{25} $ we find the eigenforms
\begin{align*}
F_1(\tau)=\sum_{\substack{m\geq 1\\m\equiv 1\pmod{24}}}a_1(m)e \left(\frac{m\tau}{24} \right):= E_{12}\eta+\frac{5184}{691} \left(4825797+691 \sqrt{1296640489}\right)\Delta\eta,\\
F_2(\tau)=\sum_{\substack{m\geq 1\\m\equiv 1\pmod{24}}}a_2(m)e \left(\frac{m\tau}{24} \right):=E_{12}\eta+\frac{5184}{691} \left(4825797-691 \sqrt{1296640489} \right)\Delta\eta.
\end{align*} 
$ S_{24}^{\operatorname{new}} (\Gamma_0(6),-1,-1) $ is the span of the two real embeddings \cite[\href{https://www.lmfdb.org/ModularForm/GL2/Q/holomorphic/6/24/a/d/}{Newform Orbit 6.24.a.d}]{lmfdb} of 
\begin{multline*}
f_{\beta}(z):=e(z)+2048 \,e(2z)+177147 \,e(3z)+4194304 \,e(4z)
+(\beta -179195) \, e(5z)+\dots,
\end{multline*}
where $ \beta $ satisfies
\begin{align*}
\beta ^2-25606546 \beta -34681805985643055 = 0.
\end{align*} 
 The coefficient field of $ f_\beta $ is $ \Q(\beta) $. Write $ f_1 $ for the newform \cite[\href{https://www.lmfdb.org/ModularForm/GL2/Q/holomorphic/6/24/a/d/}{6.24.a.d.1.1}]{lmfdb}, which corresponds to $ \beta=12803273 - 5184 \sqrt{1296640489} $, and $ f_2 $ for the conjugate newform \cite[\href{https://www.lmfdb.org/ModularForm/GL2/Q/holomorphic/6/24/a/d/}{6.24.a.d.1.2}]{lmfdb}  under the nontrivial automorphism in $ \operatorname{Gal}(\Q(\beta)) $.\par  

The images of $ F_1 $ and $ F_2 $ under $ \Sh_{D} $ for any square-free discriminant $ D>0 $ with ${ D\equiv 1\pmod{24} }$ can be computed coefficient-wise with \eqref{eqn:AAD-Shim} below. In particular, $ F_j $ corresponds to $ f_j $  through the Shimura correspondence for $ j=1,2 $. By \eqref{eqn: St scalar mult} we have $ \Sh_{D}(F_j) = a_j(D)f_j $ for $ j=1,2 $. Computing the lifts $ \Sh_{73}(F_j) $ we find that
\begin{align*}
\Sh_{73}(F_1)(z) = a_1(73)f_{1}(z),\\
   \Sh_{73}(F_2)(z) = a_2(73)f_{2}(z).
\end{align*}

We demonstrate \cref{thm:Main-KZ-formula} with the following numerical data, rounded for clarity:
\begin{itemize}
\item The inner products
\begin{align*}
\langle f_{1},f_1\rangle \approx 0.0000855682823,\\
\langle f_2,f_2\rangle \approx 0.0000751634555,
\end{align*}
are computed using modular symbols in Pari.
\item The inner products
\begin{align*}
\langle F_1,F_1\rangle  \approx 5.11382\times 10^{10},\\
\langle F_2,F_2\rangle \approx 3.47386\times 10^{10},
\end{align*}
are approximated with $ 100\,000 $ coefficients in Mathematica, using the heuristic of \cref{sec:appx-numerical-petersson}.
\item The central values
\begin{align*}
L \left(f_1\otimes \charup{73}{\cdot},12\right) \approx 4.939297819,\\
L \left(f_2\otimes\charup{73}{\cdot},12\right)\approx 2.925193375,
\end{align*}
 are computed with $ 45\,000 $ coefficients in Mathematica, using the methods of \cref{sec:appx-numerical-central}.

\end{itemize}
 With these data, we have
\begin{gather*}
a_1(73)^2\Big/\left(\frac{\Gamma(12)(73)^{12-1/2}}{\sqrt{6}(24\pi)^{12}}\frac{\langle F_1,F_1\rangle L \left(f_1\otimes \charup{73}{\cdot},12\right)}{\langle f_1,f_1\rangle} \right)\approx 0.983925,\\
a_2(73)^2\Big/\left(\frac{\Gamma(12)(73)^{12-1/2}}{\sqrt{6}(24\pi)^{12}}\frac{\langle F_2,F_2\rangle L \left(f_2\otimes \charup{73}{\cdot},12\right)}{\langle f_2,f_2\rangle} \right)\approx 0.977854.
\end{gather*}

\section{Background on Modular and Quadratic Forms}

\subsection{Modular Forms for the Eta Multiplier}\label{subsec: Modular forms background}
If $ \kappa\in\frac{1}{2}\Z $ and $ \gamma = \abcd\in\operatorname{GL}_2^+(\R) $, define the slash operator by
\begin{align*}
\left(f \Big|_{\kappa}\gamma\right)(z) = \left(\det\gamma\right)^{\tfrac{\kappa}{2}}(cz+d)^{-\kappa}f(\gamma z). 
\end{align*}
We say $ f $ is modular of weight $ \kappa $ for a subgroup $ \Gamma $ of $ \slz $ with multiplier $ \nu $ if     
\begin{align*}
\left(f \Big|_{\kappa}\gamma\right) (z) = \nu(\gamma)f(z)
\end{align*}
for all $ \gamma \in\Gamma$, $ z\in\mathbb H $.
\par  

 Let $ \Gamma\leq \slz $ be a subgroup of finite index and $ \kappa\in\frac{1}{2}\Z $. Let $ z=x+iy $ be decomposed into real and imaginary parts and $ \mathfrak F_\Gamma $ be a simply connected fundamental domain for the action of $ \Gamma $ on $ \H $. Then the Petersson inner product on $ S_\kappa(\Gamma,\nu) $ is
\begin{align*}
\langle f,g\rangle:=\frac{1}{[\slz:\Gamma]}\int_{\mathfrak F_\Gamma}f(z)\overline{g(z)}y^\kappa d\mu, 
\end{align*}
where $ d\mu:=\frac{dxdy}{y^2}  $ is the hyperbolic measure on $ \mathbb H $. When $ \Gamma=\slz $ we will write $ \mathfrak{F}_\Gamma = \mathfrak F $. \par

\begin{remark}Note that if $ (r,2)=1 $ we have $ S_{\lambda+1/2}\left(\slz,\nu_\eta^r\right)=\eta^r M_{\lambda+1/2-r/2}(\slz) $, and $ M_{\lambda+1/2-r/2}(\slz)  $ is nonzero exactly when $ \lambda+1/2-r/2\in\{0,4,6,8,...\} $.  This implies the consistency condition   
\begin{align}\label{eqn:consistency}
\lambda+1/2-r/2\equiv 0\pmod{2}.
\end{align}

\end{remark}

We now define the Hecke operators on $ S_{2\lambda}(\Gamma_0(N)) $. Let $ p $ be prime with $ (p,N) =1$; then the $ p $th Hecke operator $ T_p $ is defined by  
\begin{align}\label{eqn:Tp int} 
\left(\sum_{n\geq 0}b(n)e(nz)\right)\Bigg|T_p := \sum_{n\geq 0}\left(b(pn)+p^{2\lambda-1}b(n/p)\right)e(nz),
\end{align}
where $ b(n/p)=0 $ if $ n/p\not\in\Z $ \cite[Cor. 10.2.6]{Cohen-Strom-MF-2017}. Let $ m,n\in\Z_{\geq 1} $ with $ T_1 $ the trivial identity operator and $  \ell\geq 1$.     We define the operators for non-prime indices by the following relations \cite[Thm. 10.2.9]{Cohen-Strom-MF-2017}
\begin{gather}
T_m T_n :=T_{mn} \text{ if }(m,n)=1,\\
T_{p^{\ell+1}}:=T_p T_{p^\ell}-p^{2\lambda-1}T_{p^{\ell-1}}.\label{eqn:Tprecur}
\end{gather}

For $ S_{\lambda+1/2}(\slz, \nu_\eta^r) $ and $ p\geq 5$ a prime   define $ T_{p^2} $ by \cite[(3.16)]{AAD-Shimura-2024}
\begin{multline}\label{eqn: Tp2 half int}
\left(\sum_{\substack{m\geq r\\m\equiv r\pmod{24}}}a(m)e \left(\frac{m\tau}{24} \right)\right)\Bigg|T_{p^2}
 \\:= \sum_{\substack{m\geq r\\m\equiv r\pmod{24}}}\left(a(p^2 m)+\charup{-1}{p}^{\frac{r-1}{2}}\charup{12m}{p}p^{\lambda-1}a(m)
 +p^{2\lambda-1}a \left(\frac{m}{p^2} \right)\right)e \left(\frac{m\tau}{24} \right).
\end{multline} 
Operators of general index coprime to $ 6 $  are defined by the relations
\begin{gather}
T_{m^2}T_{n^2} := T_{m^2n^2}\text{ if }(m,n)=1,\\
T_{p^{2\ell+2}}:= T_{p^2}T_{p^{2\ell}}-p^{2\lambda-1}T_{p^{2\ell-2}}.\label{eqn:Tp2 recur}
\end{gather}
We say that a cusp form in $ S_{2\lambda}(\Gamma_0(N)) $  is a Hecke eigenform if it is a simultaneous eigenfunction for $ \{T_n:(n,N)=1\} $. A cusp form in $ S_{\lambda+1/2}(\slz,\nu_\eta^r) $ is a Hecke eigenform if it is a simultaneous eigenfunction for $ \{T_{n^2}:(n,6)=1\} $.

Fix $ N>0 $ and let $ Q\mid\mid N $ be an exact divisor of $ N $. Let $ W_Q $ be a matrix satisfying 
\begin{align*}
W_Q = \left(\begin{matrix}
Q\alpha&\delta\\N\gamma&Q\beta
\end{matrix}
\right),\quad\det W_Q = Q,\quad\alpha,\delta,\gamma,\beta\in\Z. 
\end{align*}
For such a matrix $ W_Q $ define 
$ \big|_\kappa W_Q $ to be the $ Q $th Atkin-Lehner operator on $ S_\kappa(\Gamma_0(N)) $.  When $ Q = N $ a valid choice is $ W_N = \left(\begin{smallmatrix}
0&-1\\N&0
\end{smallmatrix}
\right) $, and the operator $ \big|_\kappa W_N $ is the Fricke involution.   \par 

If $ d\mid N $ and $ f\big|V_d(z):= f(dz)$ is the level-raising operator, then the old subspace of $ S_{2\lambda}(\Gamma_0(N)) $ is given by \cite[Defn. 13.2.9]{Cohen-Strom-MF-2017}
\begin{align*}
    S_{2\lambda}^{\operatorname{old}}(\Gamma_0(N)):=\sum_{\substack{M\mid N\\M<N}}\sum_{d\mid  N/M}S_{2\lambda}\left(\Gamma_0(M)\right)\Big|V_d.
\end{align*}   
The newspace $ S_{2\lambda}^{\operatorname{new}}(\Gamma_0(N)) $  is the orthogonal complement to $ S_{2\lambda}^{\operatorname{old}}(\Gamma_0(N)) $ under the Petersson inner product. A non-zero Hecke eigenform in the newspace, normalized so the coefficient of $ e(z) $ is $ 1 $, is called a newform (cf. \cite[Defn. 13.3.4 \& Prop. 13.3.11]{Cohen-Strom-MF-2017}). Such a modular form is also an eigenform of $\big|_{2\lambda}W_N $ and $\big |_{2\lambda}W_Q $ for $ Q\mid\mid N $. For a newform $ f\in S^{\operatorname{new}}_{2\lambda}(\Gamma_0(N))$ we denote the eigenvalue of $ f $ under $ \big|_{2\lambda}W_Q $ by $ \delta_Q=\pm 1 $.\par

We have the following theorem, which is a specialization to level 1 of the main result of \cite{AAD-Shimura-2024}, which also treats the cases $ (r,6)=3 $, $ \lambda =1 $, and general level with nebentypus.

\begin{theorem*}\cite[Thm. 1.1]{AAD-Shimura-2024}.
    Let $ 1\leq r\leq 23 $ with $ (r,6)=1 $ and $ t>0 $ be a square-free integer. Suppose that $ \lambda\geq2 $ is an integer and $ F(\tau) = \sum_{m\equiv r\pmod{24}}a(m)e \left(\tfrac{m\tau}{24}\right)\in S_{\lambda+1/2}(\slz,\nu_\eta^r). $   Define $ b(n) $ by
    \begin{align}\label{eqn:AAD-Shim}
    \sum_{n\geq 1}\frac{b(n)}{n^s} = L\left(s-\lambda+1,\chardown{t}{\cdot}\right)\sum_{n\geq 1}\left(\frac{12}{n} \right)\frac{a(tn^2)}{n^s}.  
    \end{align}
    Then
    \begin{align*}
    \Sh_t(F)(z):=\sum_{n\geq 1}b(n)e(nz)\in S_{2\lambda}^{\operatorname{new}}\left(\Gamma_0(6),\delta_2(r),\delta_3(r)\right).
    \end{align*} 
The map is Hecke equivariant; that is, for primes $ p\geq 5 $ we have
\begin{align}\label{eqn:St-hecke}
\Sh_t\left(F \Big|T_{p^2}\right) = \charup{12}{p}\Sh_t(F)\Big|T_p.
\end{align}

\end{theorem*}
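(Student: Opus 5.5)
The plan is to reduce to the classical Shimura–Niwa–Cipra theory and then do the level descent separately. Set $G(\tau):=F(24\tau)=\sum_{m\equiv r\pmod{24}}a(m)e(m\tau)$. The starting point is $\eta(24\tau)=\sum_{n\geq1}\charup{12}{n}e(n^2\tau)$ together with the identity $S_{\lambda+1/2}(\slz,\nu_\eta^r)=\eta^rM_{\lambda+1/2-r/2}(\slz)$. From these one checks that $G$ lies in $S_{\lambda+1/2}\bigl(\Gamma_0(576),\psi\,\nu_\theta^{2\lambda+1}\bigr)$, where $\nu_\theta$ is the theta multiplier and $\psi$ is a real character of conductor dividing $12$ (essentially $\charup{12}{\cdot}$ times a power of $\charup{-1}{\cdot}$, determined by $r$ through the consistency condition \eqref{eqn:consistency}).

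\textbf{Step 1: the lift and its Hecke equivariance.} Apply the $t$-th Shimura lift with character $\psi$, in the form of Niwa's theta kernel and Cipra's refinement for $\lambda\geq2$. Its Dirichlet series is
\[
L\bigl(s-\lambda+1,\psi\,\chi_{(-1)^\lambda t}\bigr)\sum_{n\geq 1}\psi(n)\,a(tn^2)\,n^{-s},
\]
which matches \eqref{eqn:AAD-Shim} after unwinding $\psi$. This produces a cusp form of weight $2\lambda$ on $\Gamma_0(288)$ with trivial character, since $\psi^2$ is trivial. Shimura's commutation relation with $T_{p^2}$ for $p\nmid 6$ then gives \eqref{eqn:St-hecke}. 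The factor $\charup{12}{p}$ arises because $\psi(p)$ appears in the classical $T_{p^2}$, and the middle term $\charup{-1}{p}^{(r-1)/2}\charup{12m}{p}p^{\lambda-1}$ of \eqref{eqn: Tp2 half int} is exactly the classical middle term for the character $\psi$.

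\textbf{Step 2: level descent and newness.} This is the main obstacle. Lowering the level from $288$ to $6$ and identifying the Atkin--Lehner signs cannot be read off from the classical theorem. I would re-derive the lift directly as a theta lift that sees the full $\slz$-symmetry of $F$. The multiplier $\nu_\eta^r$ is a component of the Weil representation attached to the rank-one discriminant form $\Z/12\Z$ with $Q(x)=x^2/24$, and $\eta$ is the theta function of the character $\charup{12}{\cdot}$ on it.

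So I would pair $F$ against a Niwa/Borcherds-type kernel $\Theta(z,\tau)$ built from the lattice of binary quadratic forms $[6a,b,c]$, weighted by $\charup{12}{b}\omega_t(Q)$; these are the forms in $\mathscr Q_{tm,6}$. The stabilizer of this lattice and its weighting is $\Gamma_0(6)$. Unfolding against Poincar\'e series then recovers the coefficients \eqref{eqn:AAD-Shim} and shows directly that $\Sh_t(F)\in S_{2\lambda}(\Gamma_0(6))$. The Kloosterman–Weyl sum identity is the technical input here.

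The Atkin--Lehner involutions $W_2$ and $W_3$ act on the lattice by automorphisms of the discriminant form. Each such automorphism acts on the $\charup{12}{\cdot}$-isotypic vector as the scalar $-\charup{8}{r}$ or $-\charup{12}{r}$ respectively, which gives $\delta_p(r)$.

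\textbf{Step 3: newness at $2$ and $3$.} On the coefficient side, \eqref{eqn:AAD-Shim} shows that for $p\in\{2,3\}$ the coefficients $b(pn)$ come only from the $L$-factor. Hence $\Sh_t(F)\mid U_p=-\delta_p\,p^{\lambda-1}\Sh_t(F)$. This is the $U_p$-eigenvalue characteristic of $p$-new forms, and it forces the oldform components at $p$ to vanish, completing the proof.
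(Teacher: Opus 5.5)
The paper gives no proof of this statement; it is quoted from \cite{AAD-Shimura-2024}. Your Steps~2--3 give a workable route, and Step~2 is in substance the paper's own \cref{thm: kernel-function}. However, Step~1 contains a false identification, and Step~3 needs one more input.

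\textbf{Step 1.} Here $\psi=\chi_{12}$ exactly, by \eqref{eqn:consistency}. Shimura's lift $h_0$ of $G$ has Dirichlet series $L(s-\lambda+1,\chi_{12}\chi_{(-1)^\lambda t})\sum_n a(tn^2)n^{-s}$, with no character on the second factor. To obtain the factor $\charup{12}{n}$ you must pass to $h:=h_0\otimes\chi_{12}$. Its $L$-factor is then $L(s-\lambda+1,\chardown{t}{\cdot})$ \emph{with the Euler factors at $2$ and $3$ removed}. So $h$ has vanishing coefficients at every $n$ with $(n,6)>1$, whereas \eqref{eqn:AAD-Shim} gives, e.g., $b(2)=\charup{2}{t}2^{\lambda-1}a(t)$. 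Precisely, $h=\Sh_t(F)\,|\prod_{p=2,3}(1-\charup{p}{t}p^{\lambda-1}V_p)$. Inverting this produces an infinite sum of forms $h|V_{2^\alpha 3^\beta}$, which gives no information about the level of $\Sh_t(F)$. Your own Step~3 observation, that $b(pn)$ for $p=2,3$ comes only from the $L$-factor, contradicts the claimed match.

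So Step~1 proves nothing about where $\Sh_t(F)$ lives. What survives is Hecke equivariance. Writing $n=n_6n'$ with $n_6$ the $\{2,3\}$-part, one has $b(n)=\charup{n_6}{t}n_6^{\lambda-1}b(n')$. Hence \eqref{eqn:St-hecke} reduces to coefficients with $(n,6)=1$, where it is Shimura's computation for $G$; your check of the middle term of $T_{p^2}$ is right. The factor $\charup{12}{p}$ comes from the twist, via $(h_0\otimes\chi_{12})|T_p=\chi_{12}(p)\,(h_0|T_p)\otimes\chi_{12}$, not from the classical $T_{p^2}$.

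\textbf{Step 2.} Every $t$ with $\Sh_t(F)\neq 0$ satisfies $t\equiv r\pmod{24}$, so $D=(-1)^\lambda t$ is a fundamental discriminant as in \cref{thm: kernel-function}. That theorem carries out your Step~2:
\begin{itemize}
\item \cref{prop:Kernel-Poincare} recovers \eqref{eqn:AAD-Shim} by unfolding, without using the target-space claim of \cite{AAD-Shimura-2024}.
\item \cref{prop:glambda-modular,prop:four-exp-g-full-quad} give $g_{\lambda,6}\in S_{2\lambda}(\Gamma_0(6))$ and its Fourier expansion.
\item $\Omega^\circ_D=\Omega_D$ then places $\Sh_t(F)=\langle F,\Omega_D(-\overline z;\cdot)\rangle$ in $S_{2\lambda}(\Gamma_0(6))$.
\end{itemize}
The quadratic-form kernel, however, is only visibly modular in $z$. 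Its modularity in $\tau$ with multiplier $\nu_\eta^r$ rests entirely on \cref{thm: relate-Kloost-Weyl}, the main theorem of a separate paper. As written, your Step~2 points to that input rather than arguing it.

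Your sketch also mixes two routes. A genuine Siegel-theta kernel would get $\tau$-modularity from the Weil representation and would not need the Kloosterman--Weyl identity, but it requires a different unfolding.

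On the Atkin--Lehner signs, your attribution is off. By \eqref{eqn:Wd on Q}, the middle coefficient becomes $\mp 5b$ modulo $12$, so $\charup{12}{\cdot}$ contributes $-1$ independently of $r$. The $r$-dependence comes from the genus character: $\omega_D(Q\circ\hat W_p)=\charup{D}{p}\omega_D(Q)$, with $\charup{D}{2}=\charup{8}{r}$ and $\charup{D}{3}=\charup{12}{r}$.

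\textbf{Step 3.} This is correct once the level is $6$. It alone fixes the signs, since a newform has $a_p=-\delta_p p^{\lambda-1}$. But the claim that the $U_p$-eigenvalue ``forces the oldform components to vanish'' needs a reason. Two options:
\begin{itemize}
\item On $\operatorname{span}\{g,g|V_p\}$ for an eigenform $g$ of level prime to $p$, the eigenvalues of $U_p$ are the roots of $X^2-a_p(g)X+p^{2\lambda-1}$. By Deligne these have absolute value $p^{\lambda-1/2}$, so they are never $\pm p^{\lambda-1}$.
\item Alternatively, use the $W_p$-eigenvalue $\delta_p(r)$ from Step~2. Then $\operatorname{Tr}^{6}_{6/p}f=f+p^{1-\lambda}(f|_{2\lambda}W_p)|U_p=0$, and likewise for $f|_{2\lambda}W_p$, which is the trace criterion for $p$-newness.
\end{itemize}
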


The following is a specialization of Yang's isomorphism of Hecke modules in level $ 1 $ \cite[\S 5]{Yang2014}.

\begin{theorem}
Let $ (r,6)=1 $ with $ 1\leq r\leq 23 $ and let $ \lambda\geq 1 $ be an integer.  Suppose $ \lambda+1/2-r/2$ is an even non-negative integer.     For all $ n\geq 1$ with $ (n,6)=1 $ we have
\begin{align*}
\operatorname{tr}\left(S_{\lambda+1/2}(\slz,\nu_\eta^r)\Big| T_{n^2}\right) = \charup{12}{n}\operatorname{tr}\left(S_{2\lambda}^{\operatorname{new}}\left(\Gamma_0(6),\delta_2(r),\delta_3(r)\right)\Big| T_n\right).
\end{align*}

\end{theorem}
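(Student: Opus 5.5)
This trace identity is Yang's, and the plan is to deduce it from the correspondence results already recorded above, with no new trace computation.

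\emph{Reduction to eigenbases.} Under the stated hypotheses, the preceding remark shows that $S_{\lambda+1/2}(\slz,\nu_\eta^r)=\eta^rM_{\lambda+1/2-r/2}(\slz)$ is nonzero. The operators $T_{n^2}$ with $(n,6)=1$ commute and are self-adjoint for the Petersson product, by the usual coset computation. Hence the space has an orthogonal basis of simultaneous Hecke eigenforms $F_1,\dots,F_k$. On the integral weight side, the newspace $S_{2\lambda}^{\new}(\Gamma_0(6),\delta_2(r),\delta_3(r))$ has a basis of newforms $f_1,\dots,f_\ell$, and multiplicity one holds there.

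\emph{Matching the two bases.} Write the eigenvalues as $F_j|T_{n^2}=\mu_j(n)F_j$ and $f_i|T_n=b_i(n)f_i$. The identity to prove then says
\[
\sum_j\mu_j(n)=\jacsym{12}{n}\sum_i b_i(n)\qquad\text{for all }(n,6)=1.
\]
It suffices to build a bijection $j\mapsto i(j)$ with $\mu_j(n)=\jacsym{12}{n}b_{i(j)}(n)$. For this I use the Shimura lifts $\Sh_t$ of \cite[Thm.~1.1]{AAD-Shimura-2024}. Fix $j$. I first choose a square-free $t$ with $\Sh_t(F_j)\neq 0$.
\begin{itemize}
\item Such a $t$ exists because some coefficient $a(m)$ of $F_j$ is nonzero. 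Writing $m=tn^2$ with $t$ square-free, the Dirichlet series \eqref{eqn:AAD-Shim} does not vanish identically, since multiplying by an $L$-function is invertible.
\item By \eqref{eqn:St-hecke}, $\Sh_t(F_j)$ is then a Hecke eigenform in the newspace with eigenvalues $\jacsym{12}{p}\mu_j(p)$.
\item By strong multiplicity one, $\Sh_t(F_j)$ is a multiple of a unique newform $f_{i(j)}$. The relation extends from primes to all $n$ through the recursions \eqref{eqn:Tprecur} and \eqref{eqn:Tp2 recur}. These recursions match because $\jacsym{12}{p}^2=1$.
\end{itemize}
This gives a Hecke-equivariant injection of eigen-systems from the half-integral side into the newform side.

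\emph{Main obstacle: bijectivity.} I need both that distinct $F_j$ give distinct $f_{i}$ (multiplicity one on the half-integral side) and that every newform is hit. Neither follows formally from the lifts alone. Comparing dimensions would settle it, so I would try to show
\[
\dim S_{\lambda+1/2}(\slz,\nu_\eta^r)=\dim S_{2\lambda}^{\new}(\Gamma_0(6),\delta_2(r),\delta_3(r)).
\]
The left side is $\dim M_{\lambda+1/2-r/2}(\slz)$, which is explicit. The right side comes from the known formula for dimensions of Atkin–Lehner eigenspaces of newforms of level $6$, for instance via the trace of $W_2$ and $W_3$ (Yau/Skoruppa–Zagier formulas). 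Checking that these agree for each residue of $r$ modulo $24$ and each admissible $\lambda$ is the real work.

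Injectivity of the correspondence on the half-integral side is the difficult multiplicity-one statement. I expect it to be circular here, since the paper derives multiplicity one (\cref{cor: mult-one-eta-mult}) from this trace formula. I would therefore fall back on Yang's actual argument \cite[\S5]{Yang2014}. That argument computes $\operatorname{tr}T_{n^2}$ on $\eta^rM_k$ directly, through the Eichler–Selberg trace formula for the lifted forms $F(24\tau)$ in level $576$ with the appropriate character. It then identifies the result with the twisted trace on the level-$6$ newspace, using trace formulas restricted to Atkin–Lehner eigenspaces. In short, I would cite \cite[\S5]{Yang2014} for this comparison rather than reprove it.
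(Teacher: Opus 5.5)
Your proposal ends up where the paper does: the paper gives no independent argument for this trace identity and imports it directly from Yang's level-one computation in \cite[\S5]{Yang2014}. Your final step cites the same source, and you correctly recognize that the lift-based matching cannot supply multiplicity one on the half-integral side without circularity. Two slips in your exploratory part do not affect the cited proof: $\eta^r M_{\lambda+1/2-r/2}(\slz)$ is zero when $\lambda+1/2-r/2=2$, so the space is not always nonzero under the hypotheses; and equality of dimensions alone would not make your eigensystem map bijective unless you already knew it was injective or surjective.
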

Because the modules in the above theorem are semisimple \cite[p. 6]{Yang2014}, a classical theorem  \cite[XVII, Cor. 3.8]{Lang-Algebra} then implies an isomorphism of Hecke modules, which is equivalent to \cite[Thm. 1]{Yang2014} through Proposition 6 (loc. cit.).

\begin{corollary}\label{cor:iso-Hecke}
    Let $ (r,6)=1 $ with $ 1\leq r\leq 23 $ and let $ \lambda\geq 1 $ be an integer.   Suppose $ S_{\lambda+1/2}(\slz,\nu_\eta^r)\neq \{0\} $. There exists a vector space isomorphism
\begin{align*}
\varrho:S_{\lambda+1/2}(\slz,\nu_\eta^r)\isoarrow S_{2\lambda}^{\operatorname{new}}\left(\Gamma_0(6),\delta_2(r),\delta_3(r)\right) 
\end{align*}
such that for all $ F\in S_{\lambda+1/2}(\slz,\nu_\eta^r) $ we have
\begin{align*}
\varrho \left(F \Big|T_{n^2}\right) = \charup{12}{n}\varrho \left(F\right)\Big|T_n \quad\text{for all }n\geq 1,\,(n,6)=1.
\end{align*}

\end{corollary}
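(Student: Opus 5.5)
This corollary should follow formally from the preceding trace identity. The plan has three steps: realize both spaces as modules over one commutative algebra, compare characters, and invoke the semisimple representation theory cited just before the statement.

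Let $\mathcal{H}$ be the free commutative $\C$-algebra generated by symbols $t_p$, one for each prime $p\geq 5$. It acts on $V:=S_{\lambda+1/2}(\slz,\nu_\eta^r)$ by $t_p\mapsto T_{p^2}$. These operators commute, as noted in \cref{sec:numerical-demo}, so this is a well-defined action.

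It acts on $W:=S_{2\lambda}^{\operatorname{new}}(\Gamma_0(6),\delta_2(r),\delta_3(r))$ by $t_p\mapsto \charup{12}{p}T_p$. This is also well defined, since the $T_p$ with $p\nmid 6$ commute and preserve the new subspace with fixed Atkin-Lehner signs.

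We must check that $T_{n^2}$ on $V$ and $\charup{12}{n}T_n$ on $W$ are given by the same element $h_n\in\mathcal H$.

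\begin{itemize}
\item For prime powers, the recursion \eqref{eqn:Tp2 recur} on $V$ and the recursion \eqref{eqn:Tprecur} on $W$ match once we use $\charup{12}{p}^2=1$. Multiplying \eqref{eqn:Tprecur} by $\charup{12}{p}^{\ell+1}$ gives
\[
\charup{12}{p^{\ell+1}}T_{p^{\ell+1}}=\bigl(\charup{12}{p}T_p\bigr)\bigl(\charup{12}{p^\ell}T_{p^\ell}\bigr)-p^{2\lambda-1}\charup{12}{p^{\ell-1}}T_{p^{\ell-1}}.
\]
This is exactly the same polynomial recursion in $t_p$ that defines $T_{p^{2\ell+2}}$ on $V$.
\item For general $n$ coprime to $6$, both sides are multiplicative in $n$, so they agree.
\end{itemize}

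The trace theorem then says precisely that $\operatorname{tr}(h_n\mid V)=\operatorname{tr}(h_n\mid W)$ for every $n$ with $(n,6)=1$.

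Next we extend this equality of traces to all of $\mathcal{H}$. The elements $h_n$ span $\mathcal H$ linearly: from the recursion, $t_p^{\ell}$ is a triangular combination of the $h_{p^j}$ with $j\le \ell$, and monomials in different primes are products of $h$'s with coprime indices. By linearity, the characters $h\mapsto\operatorname{tr}(h\mid V)$ and $h\mapsto \operatorname{tr}(h\mid W)$ therefore coincide on $\mathcal H$.

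Both $V$ and $W$ are semisimple $\mathcal H$-modules, as stated from \cite[p. 6]{Yang2014}. On $W$ this is clear because the Hecke operators are normal with respect to the Petersson inner product.

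Both modules are finite dimensional over the characteristic zero field $\C$. The cited theorem \cite[XVII, Cor. 3.8]{Lang-Algebra} says that such semisimple modules with equal characters are isomorphic. Hence there is an $\mathcal H$-module isomorphism $\varrho:V\to W$, which is a $\C$-linear bijection.

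Unwinding the action, $\mathcal H$-equivariance for $h_n$ is exactly $\varrho(F|T_{n^2})=\charup{12}{n}\varrho(F)|T_n$. The hypothesis $V\neq\{0\}$ is not needed for the argument; it only makes the statement non-vacuous.

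The only point I expect to need care is the bookkeeping showing that the twisted operators $\charup{12}{n}T_n$ satisfy the same recursion as the $T_{n^2}$. In particular, the character must enter with exponent $\ell$ in $T_{p^\ell}$ so that it matches $T_{p^{2\ell}}$. Everything else is a formal appeal to the semisimplicity and the trace identity.
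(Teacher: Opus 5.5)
Your argument is correct and is the paper's proof: the trace identity, semisimplicity from \cite[p.~6]{Yang2014}, and \cite[XVII, Cor.~3.8]{Lang-Algebra}, with the bookkeeping the paper leaves implicit made explicit (one abstract algebra acting on both spaces, the matching recursions for $T_{p^{2\ell}}$ and $\charup{12}{p^\ell}T_{p^\ell}$, and the spanning by the $h_n$ so that the characters agree on the whole algebra). One correction to your closing remark: the hypothesis $V\neq\{0\}$ is used, because by the Remark it forces $\lambda+1/2-r/2\in\{0,4,6,\dots\}$, which is the hypothesis of the trace theorem; and without it the statement is not vacuous, since it would assert $W=\{0\}$.
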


\begin{remark}

Recall that multiplicity one for newforms guarantees that simultaneous eigenspaces for the Hecke operators are all one dimensional, so we have the following decomposition into distinct Hecke eigenspaces:
\begin{align*}
S_{2\lambda}^{\operatorname{new}}(\Gamma_0(6),\delta_2(r),\delta_3(r)) = \bigoplus_{\substack{f\text{ a newform }\\ f\in S_{2\lambda}^{\operatorname{new}}(\Gamma_0(6),\delta_2(r),\delta_3(r))}}\operatorname{span}_{\C}(\{f\}).
\end{align*}
\end{remark}

Here we write an explicit version of multiplicity one for $ S_{\lambda+1/2}(\slz,\nu_\eta^r) $,  together with an essential property of the Shimura lifts. The following results all extend to $ \lambda\geq 1 $ and $ (r,6)=3 $ with some modification.  

\begin{corollary}\label{cor: mult-one-eta-mult} Let $ (r,6)=1 $ with $ 1\leq r\leq 23 $ and let $ \lambda\geq 2 $ be an integer.    Suppose $ S_{\lambda+1/2}(\slz,\nu_\eta^r)\neq \{0\} $.  Then the following are true:
\begin{enumerate}
\item Let $ f\in S_{2\lambda}^{\operatorname{new}}(\Gamma_0(6),\delta_2(r),\delta_3(r))$ be a newform with Hecke eigenvalues $ {\{b(n):(n,6)=1\}} $. Then there exists a Hecke eigenform  $ F\in S_{\lambda+1/2}(\slz,\nu_\eta^r) $, unique up to multiplication by a scalar, with
\begin{align}\label{eqn:mult one in lambda plus 1/2}
F \Big|T_{n^2} = \charup{12}{n}b(n)F\quad \text{for all }n\geq 1,\,(n,6)=1.
\end{align} 

    \item Suppose $ F,G\in S_{\lambda+1/2}(\slz,\nu_\eta^r)$ are Hecke eigenforms with eigenvalues under $\big| T_{p^2} $  which are equal for all but a finite number of primes $ p\geq 5 $.  Then $ G=\beta F $ for some $ \beta\in\C $.

    \item   If $ f\in S_{2\lambda}^{\operatorname{new}}(\Gamma_0(6),\delta_2(r),\delta_3(r))$ is a newform corresponding to $F\in S_{\lambda+1/2}(\slz,\nu_\eta^r)$ as above and $ t>0 $ is square-free then    
\begin{align}\label{eqn: St scalar mult}
\Sh_{t}(F) = a(t)f.
\end{align}

    \end{enumerate}

\end{corollary}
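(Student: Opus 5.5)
The plan is to transport everything through the Hecke-module isomorphism $\varrho$ of \cref{cor:iso-Hecke} and then use multiplicity one on the integral weight side.

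\emph{Part (1).} By the Remark, $S_{2\lambda}^{\new}(\Gamma_0(6),\delta_2(r),\delta_3(r))$ is a direct sum of one-dimensional spaces $\operatorname{span}_\C\{f\}$, one for each newform $f$. Newforms with different eigenvalue systems $\{b(n):(n,6)=1\}$ differ already at infinitely many primes, by strong multiplicity one at level $6$. Put $F:=\varrho^{-1}(f)$. The intertwining relation gives
\[
\varrho\bigl(F\big|T_{n^2}\bigr)=\charup{12}{n}f\big|T_n=\charup{12}{n}b(n)f,
\]
and applying $\varrho^{-1}$ yields \eqref{eqn:mult one in lambda plus 1/2}. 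For uniqueness, suppose $G$ satisfies \eqref{eqn:mult one in lambda plus 1/2}. Then $\varrho(G)$ is a simultaneous $T_n$-eigenvector with the eigenvalues of $f$. Decomposing $\varrho(G)$ along the newform basis, every component belonging to a newform $f'\ne f$ must vanish, since $f'$ has a different eigenvalue $b'(n)$ for some $n$. Hence $\varrho(G)\in\C f$, and so $G\in\C F$.

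\emph{Part (2).} Let $F,G$ be Hecke eigenforms. Then $\varrho(F)$ and $\varrho(G)$ are simultaneous eigenvectors in the newspace. Since eigenvectors of a diagonalizable commuting family lying in a sum of distinct eigenspaces must lie in one of them, each is a scalar multiple of a single newform, say $f_F$ and $f_G$. The eigenvalues of $f_F$ and $f_G$ at $p$ are $\charup{12}{p}$ times the $T_{p^2}$-eigenvalues of $F$ and $G$. These agree for almost all $p\ge5$, so strong multiplicity one (newforms of the same level agreeing at almost all primes are equal) gives $f_F=f_G$. Uniqueness in part (1) then gives $G=\beta F$. The case $F=0$ is excluded because eigenforms are non-zero.

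\emph{Part (3).} This is the step needing a little care. By the theorem of \cite{AAD-Shimura-2024}, $\Sh_t(F)$ lies in the newspace, and by \eqref{eqn:St-hecke}
\[
\Sh_t(F)\big|T_p=\charup{12}{p}\Sh_t(F|T_{p^2})=\charup{12}{p}^2b(p)\Sh_t(F)=b(p)\Sh_t(F)
\]
for all $p\ge5$. So $\Sh_t(F)$ is a $T_p$-eigenvector with the eigenvalues of $f$, and the argument of part (1) gives $\Sh_t(F)=cf$.

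To find $c$, compare the coefficient of $e(z)$. Since $f$ is normalized, this coefficient is $c$. From \eqref{eqn:AAD-Shim} the coefficient $b(1)$ equals the product of the first terms of the two Dirichlet series, namely $1\cdot\charup{12}{1}a(t)=a(t)$. Thus $\Sh_t(F)=a(t)f$; if $a(t)=0$ this says $\Sh_t(F)=0$, which is consistent.

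The main obstacle is only the justification that strong multiplicity one applies within this Atkin–Lehner newspace at level $6$. This is standard Atkin–Lehner–Li theory for newforms of a fixed level, and is what the Remark records.
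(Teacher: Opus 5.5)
Your proposal is correct and follows the same route as the paper. Parts (1) and (2) come from transporting eigenvectors through the isomorphism $\varrho$ of \cref{cor:iso-Hecke} and invoking (strong) multiplicity one for newforms, and part (3) from the Hecke equivariance \eqref{eqn:St-hecke} of $\Sh_t$; your comparison of the $e(z)$ coefficient via \eqref{eqn:AAD-Shim} to identify the scalar as $a(t)$ makes explicit a step the paper leaves implicit.
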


\begin{proof}
(1) follows from the Hecke module isomorphism in \cref{cor:iso-Hecke}. (2) follows from \cref{cor:iso-Hecke} together with multiplicity one for newforms. (3) follows from Hecke equivariance of $ \Sh_t $.
\end{proof}

\begin{remark} 
    Let $ F=\sum_m a(m)e \left(\frac{m\tau}{24} \right)\in S_{\lambda+1/2}(\slz,\nu_\eta^r) $ be non-zero. Then $ F \Big|V_{24} $ is a modular form of weight $ \lambda+1/2 $ transforming with respect to a power of the theta multiplier, with a character $ \chi $. By 
Vign\'eras \cite[Thm. 3]{Vigneras-Gamma-Fonctionnelles}, we have that, if $ F \Big|V_{24} $ is not a linear combination of theta functions, then there exist infinitely many square-free integers $ t $ such that $ a(td^2)\neq 0 $ for some $ d\in\Z $. When $ \lambda\geq 2 $ we see that $ F\big| V_{24} $ is not a linear combination of theta series. Using \eqref{eqn: Tp2 half int}-\eqref{eqn:Tp2 recur} we find that if $ F $ is a Hecke eigenform then there are infinitely many $ t>0 $ which are square-free such that $ a(t)\neq0 $. The argument of \cite[p. 260]{Kohnen-iso-gamma4-1980}, together with the identity \eqref{eqn: St scalar mult},  then shows that there exists a linear combination of the lifts $ \Sh_t:S_{\lambda+1/2}(\slz,\nu_\eta^r)\to S_{2\lambda}^{\operatorname{new}}(\Gamma_0(6),\delta_2(r),\delta_3(r))$ which is a Hecke module isomorphism.
\end{remark}

\subsection{Quadratic Forms}\label{subsec:background-quad-form}
Recall the definition of $ \mathscr{Q} $ from \eqref{eqn:fancyQ-defn}. There is a right action of $ \operatorname{GL}_2^+(\Z) $  on $ \mathscr{Q} $ which, for $ Q\in \mathscr{Q} $ and $\left(\begin{smallmatrix}
\alpha&\beta\\\gamma&\delta
\end{smallmatrix}
\right) \in \operatorname{GL}_2^+(\Z)$, is given by the change of variables
\begin{align*}
\left(Q\circ\left(\begin{matrix}
\alpha&\beta\\\gamma&\delta
\end{matrix}
\right)\right)(x,y)  = Q(x',y') = Q(\alpha x+\beta y,\gamma x+\delta y),
\end{align*}
so 
\begin{align}\label{eqn:trans-matrix-quad-forms}
\left(\begin{matrix}
x'\\y'
\end{matrix}
\right) = \left(\begin{matrix}
\alpha&\beta\\\gamma&\delta
\end{matrix}
\right) \left(\begin{matrix}
x\\y
\end{matrix}
\right).
\end{align}

Some convenient choices for the Atkin-Lehner matrices $ W_d $ in level $ 6 $  
\cite[p. 16]{Andersen2017-Singinv}
are 
\begin{gather*}
W_2 = \left(\begin{matrix}
2&-1\\6&-2
\end{matrix}
\right),\quad
W_3 = \left(\begin{matrix}
3&1\\6&3
\end{matrix}
\right),\quad
W_6=\left(\begin{matrix}
0&-1\\6&0
\end{matrix}
\right).
\end{gather*}
We normalize the Atkin-Lehner matrices when they act on quadratic forms by defining 
\begin{align}
\hat{W}_d:=\frac{1}{\sqrt{d}}W_d.
\end{align} 
If $ Q = [6a,b,c]\in \mathscr{Q} $ with $ a,b,c\in\Z $  then the $ \hat{W}_d  $ act on $ Q $ by 
\begin{align}\label{eqn:Wd on Q}
[6a,b,c]\circ \hat{W}_d = \begin{dcases}
[6a,b,c],&d=1,\\
[12a+6b+18c,-5b-12(a+c),3a+b+2c],&d=2,\\
[18a+6b+12c,5b+12(a+c),2a+b+3c],&d=3,\\
[6c,-b,a],&d=6.
\end{dcases}
\end{align}
Let $ I $ be the identity of $ \slz $ and note that $ -I $ has the same action as $ I $. Define 
\begin{align}\label{eqn:Gammaplus}
\Gamma^+=\left\langle\left(\Gamma_0(6)/\{\pm I\}\right)\cup\{\hat{W}_d:d\mid 6\}\right\rangle
\end{align} to be the group generated by $ \Gamma_0(6)/\{\pm I\} $ and the $ \hat{W}_d $,   
with the right action on $ \mathscr{Q} $ as above\footnote{Note that one can also take a left action of $ \Gamma^+ $ on quadratic forms instead, as is done in \cite[\S 3]{Andersen2017-Singinv}. If we denote Andersen's action by $ \gamma\cdot Q(x,y) $ for $ \gamma\in \Gamma^+ $ then it is related to our own by $ \gamma^{-1}\cdot Q(x,y) = \left(Q\circ \gamma\right)(x,y); $ however, for consistency with \cite{Kohnen-1985} we keep the right action. In \cref{sec: cycle-integrals} we will choose an orientation for the cycle integrals which is invariant under $ \Gamma^+ $.}. \par

Note that $\mathscr{Q}_{D,6}$ can be decomposed into the subsets $\mathscr{Q}^{(t)}_{D,6}$, where
\begin{align}\label{eqn: Qt quad defined}
\mathscr{Q}^{(t)}_{D,6}:=\{Q = [*,b,*]: \operatorname{disc} Q = D,\,6\mid Q(1,0),\,b\equiv t\pmod{12}\}.
\end{align}
$ \mathscr{Q}^{(t)}_{D,6} $ is stable under the action of $ \Gamma_0(6) $.
Let $ D$ be an odd fundamental discriminant with $ D\equiv 1\pmod{24} $. If $ (t,6)=1 $ then $ \mathscr{Q}^{(t)}_{D,6}/\Gamma_0(6) $ is a finite group under  Gaussian composition \cite[p. 16]{Andersen2017-Singinv}.

For $ D $ an odd fundamental discriminant the genus character $ \omega_D $ on $ Q = [a,b,c] $ with $ D\mid (b^2-4ac) $  is given by
\begin{align}\label{eqn:omega_D}
\omega_D(Q) = \begin{dcases}
    0&\text{ if }(a,b,c,D)>1,\\
    \charup{D}{d}&\text{ if }{(a,b,c,D)}=1\text{, }Q(x,y)=d\text{ for some }x,y\in\Z\text{, and }(d,D)=1.
\end{dcases}
\end{align}
As remarked by Kohnen this is a well-defined invariant of the equivalence classes of quadratic forms with discriminant divisible by $ D $,  modulo $ \slz $ \cite[{p. 238}]{Kohnen-1985}. 


\section{Poincar\'e Series for Powers of the Eta Multiplier}\label{sec:Poincare}

Let 
\begin{gather*}\Gamma_\infty:=\left\langle T,-I\right\rangle \leq  \slz \end{gather*}  
be the stabilizer of the cusp $i\infty$ for the full modular group. For $\sigma\equiv r\pmod{24}$, $ \sigma>0 $,  define the $\sigma$th Poincar\'e series of weight $ \lambda+1/2 $ and level $ 1 $ by  
\begin{gather*}
    P_{\lambda+1/2,\sigma}(\tau)= P_\sigma(\tau):=\sum_{\gamma\in {\Gamma_\infty}\backslash{\slz}}\nu_\eta(\gamma)^{-r}\left(e \left(\frac{\sigma \tau}{24} \right)\Big|_{\lambda+1/2}\gamma\right).
\end{gather*}
If $ \sigma\not\equiv r\pmod{24} $ we take $ P_{\sigma}=0 $.

Define the generalized Kloosterman sum for the multiplier $ \nu_\eta^r $  by
\begin{align}\label{eqn:gen-Kloost-eta}
A_{\nu_\eta^r}(\sigma,n;c):=\sum_{\substack{d\pmod{c}\\(d,c)=1}}\nu_\eta^{-\sigma}\left(\left(\begin{matrix}
\overline d&*\\c&d
\end{matrix}
\right)\right)e \left(\frac{\sigma \overline d+n d}{24c} \right),
\end{align}
where the matrices $ \left(\begin{smallmatrix}
\overline d&*\\c&d
\end{smallmatrix}
\right)\in\slz $ and $\overline{d}d\equiv 1\pmod{c}$. This is independent of the coset representative for $ \overline d $.  Note that when {$m\equiv n\equiv 1\pmod{24}$} (which corresponds to the multiplier $\nu_\eta^1$) we have $A_{\nu_\eta}(m,n;c)=\sqrt{i}K \left(\frac{m-1}{24},\frac{n-1}{24};c  \right)$, where $K$ is defined in \cite[(1.11)]{Andersen2017-Singinv}. Pribitkin and Williams derived the Fourier expansions for Poincar\'e series. The following is adapted from \cite[Lemma 2]{PW-2019-Rademacher} when $ \sigma>0 $.

\begin{lemma}
   
Let $ (r,6)=1 $ with $ 1\leq r\leq 23 $ and let $ \lambda\geq 2 $ be an integer.   Suppose $ S_{\lambda+1/2}(\slz,\nu_\eta^r)\neq \{0\} $.  Let $\delta_{*,*}$ be the Kronecker delta function, $J_{k}$ be the $k$th $J$-Bessel function, and $A_{\nu_\eta^r}$ be the generalized Kloosterman sum defined in \eqref{eqn:gen-Kloost-eta}. Let $r \leq\sigma$ and $\sigma\equiv r\pmod{24}$. Then
\begin{gather*}
P_{\lambda+1/2,\sigma}(\tau) =P_\sigma(\tau)= \sum_{\substack{m\equiv r\pmod{24}\\m\geq r}}\alpha_\sigma(m)e \left(\frac{m\tau}{24} \right),
\end{gather*}
where
\begin{gather}\label{eqn:alpha-poincare}
\alpha_{\sigma}(m):=\delta_{\sigma,m}+2\pi i^{-\lambda-1/2}\sqrt{\frac{m}{\sigma}}^{\lambda-1/2}\sum_{c\geq 1}\frac{A_{\nu_\eta^r}(\sigma,m;c)}{c}J_{\lambda-1/2}\left(\frac{\pi \sqrt{\sigma m}}{6c} \right).
\end{gather}
Moreover, $ P_{\lambda+1/2,\sigma}\in S_{\lambda+1/2}(\slz,\nu_\eta^r) $. 
\end{lemma}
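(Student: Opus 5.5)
The plan is the standard computation of the Fourier expansion of a Poincaré series, carried out directly with the multiplier $\nu_\eta^r$.

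\textbf{Step 1: convergence and modularity.} Since $\lambda\geq 2$, the weight satisfies $\lambda+1/2>2$. Because $|\nu_\eta|=1$, the series defining $P_\sigma$ is dominated by the weight $\lambda+1/2$ Eisenstein-type majorant $\sum_{(c,d)=1}|c\tau+d|^{-\lambda-1/2}$. This converges absolutely and locally uniformly, so $P_\sigma$ is holomorphic on $\mathbb H$.

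\textbf{Step 2: well-definedness and transformation law.} Each summand is well defined on $\Gamma_\infty\backslash\slz$. For $T$ this holds because $e(\sigma\tau/24)|T=e(\sigma/24)e(\sigma\tau/24)$ while $\nu_\eta(T)^{-r}=e(-r/24)$, and $\sigma\equiv r\pmod{24}$. For $-I$ it holds by the consistency condition \eqref{eqn:consistency}, which forces the weight and multiplier signs to agree. The transformation law $P_\sigma|_{\lambda+1/2}\gamma=\nu_\eta^r(\gamma)P_\sigma$ then follows by reindexing. Here one uses that $\nu_\eta^r$ is a consistent multiplier system of weight $\lambda+1/2$, i.e.\ the cocycle relation for $(c\tau+d)^{1/2}$ is absorbed exactly as it is for $\eta^r$.

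\textbf{Step 3: Fourier expansion.} The $c=0$ cosets contribute $e(\sigma\tau/24)$, which gives the term $\delta_{\sigma,m}$. For $c\geq 1$, write the cosets as $\gamma=\left(\begin{smallmatrix}\overline d&*\\c&d\end{smallmatrix}\right)$ with $d\pmod c$, shifted by $T^n$ for $n\in\Z$. Use $\nu_\eta(\gamma T^n)=\nu_\eta(\gamma)\nu_\eta(T)^n$ and $\gamma\tau=\overline d/c-1/(c(c\tau+d))$. The $c$-th block then becomes
\[
\sum_{d\,(c)}\nu_\eta^{-r}(\gamma)\,e\!\left(\frac{\sigma\overline d}{24c}\right)\sum_{n}\bigl(c(\tau+n)+d\bigr)^{-\lambda-1/2}e\!\left(-\frac{\sigma}{24c^2(\tau+n+d/c)}\right)e(\cdots).
\]
Apply Poisson summation in $n$ with frequency $m/24$, which is admissible since the summand is periodic up to $e(r/24)$-twists. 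The resulting integral is the classical Lipschitz–Bessel integral
\[
\int_{\Im w=y}w^{-k}e\!\left(-\frac{a}{w}-bw\right)dw=2\pi i^{-k}(b/a)^{(k-1)/2}J_{k-1}\bigl(4\pi\sqrt{ab}\bigr),
\]
taken with $k=\lambda+1/2$, $a=\sigma/(24c^2)$ and $b=m/24$. This yields $J_{\lambda-1/2}(\pi\sqrt{\sigma m}/(6c))$ and the factor $\sqrt{m/\sigma}^{\lambda-1/2}$. Collecting the $d$-sum gives $A_{\nu_\eta^r}(\sigma,m;c)/c$.

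One needs $\nu_\eta^{-r}=\nu_\eta^{-\sigma}$ on these matrices in order to match \eqref{eqn:gen-Kloost-eta}. This holds because $\nu_\eta^{24}=1$ and $\sigma\equiv r\pmod{24}$.

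\textbf{Step 4: cuspidality.} The frequencies $m\leq 0$ vanish, since for them the contour integral can be shifted to $\Im w\to\infty$ and is zero. This gives $P_\sigma\in S_{\lambda+1/2}(\slz,\nu_\eta^r)$.

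\textbf{Main obstacle.} The hardest part is the bookkeeping of the multiplier and the branch of $(c\tau+d)^{1/2}$ through the coset decomposition and the Poisson step, so that the phase comes out as exactly $i^{-\lambda-1/2}$ with the Kloosterman sum normalized as in \eqref{eqn:gen-Kloost-eta}. I would check this against \cite[Lemma 2]{PW-2019-Rademacher}, which treats general real weight and multiplier, specializing it to $\nu_\eta^r$ and $\sigma>0$. Alternatively, one can use the fact that this is precisely the setting of their lemma and simply translate their notation: their $\kappa$-parameter is $(r-1)/24$ shifted, and the normalization is in terms of $(m-\sigma')/24$.
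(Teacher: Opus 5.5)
Your proposal is correct. The paper gives no independent argument here: it imports the expansion from \cite[Lemma 2]{PW-2019-Rademacher}, specialized to $\sigma>0$, which is exactly your fallback. Your Steps 1--4 are the standard unfolding, Poisson and Lipschitz--Bessel derivation that underlies that lemma, and the two checks you single out are what is needed to match the paper's normalization of $A_{\nu_\eta^r}$: the $-I$ compatibility is equivalent to \eqref{eqn:consistency}, which follows from $S_{\lambda+1/2}(\slz,\nu_\eta^r)\neq\{0\}$, and $\nu_\eta^{-r}=\nu_\eta^{-\sigma}$ holds since $\sigma\equiv r\pmod{24}$.
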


Using the standard unfolding procedure we can prove the following.

\begin{proposition} \label{prop:Prho-unfolding}
Let $ (r,6)=1 $ with $ 1\leq r\leq 23 $ and let $ \lambda\geq 2 $ be an integer. Let $0\neq F(\tau) = \sum_{m\equiv r\pmod{24}}b(m)e \left(\tfrac{m\tau}{24}\right)\in S_{\lambda+1/2}(\slz,\nu_\eta^{r})\neq \{0\} $. Then for $\sigma\equiv r\pmod{24}$ with $r\leq \sigma $ we have
\begin{gather}\label{eqn:unfold-coeff}
    \left\langle F,P_{\lambda+1/2,\sigma}\right\rangle = \left(\frac{\sigma\pi}{6} \right)^{1/2-\lambda}\Gamma(\lambda-1/2)b(\sigma).
\end{gather}

\end{proposition}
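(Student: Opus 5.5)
Unfold the inner product against the Poincaré series in the usual way, using the Petersson normalization of this paper, in which the index $[\slz:\slz]=1$ is harmless. Write
\begin{align*}
\langle F,P_\sigma\rangle=\int_{\mathfrak F}F(\tau)\overline{P_\sigma(\tau)}v^{\lambda+1/2}\,d\mu,
\end{align*}
with $\tau=u+iv$, and insert the defining series $P_\sigma=\sum_{\gamma\in\Gamma_\infty\backslash\slz}\nu_\eta(\gamma)^{-r}\bigl(e(\sigma\tau/24)\big|_{\lambda+1/2}\gamma\bigr)$. Since $\lambda\geq 2$, the weight satisfies $\lambda+1/2>2$. The series therefore converges absolutely and locally uniformly, and $F$ is a cusp form, so interchanging sum and integral is justified.

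For each $\gamma$, use the modularity of $F$. We have $F(\gamma\tau)=\nu_\eta^r(\gamma)(c\tau+d)^{\lambda+1/2}F(\tau)$ and $\operatorname{Im}(\gamma\tau)^{\lambda+1/2}=v^{\lambda+1/2}\abs{c\tau+d}^{-2\lambda-1}$. Since $\abs{\nu_\eta}=1$, it follows that
\begin{align*}
F(\tau)\,\overline{\nu_\eta(\gamma)^{-r}(c\tau+d)^{-\lambda-1/2}e(\sigma\gamma\tau/24)}\,v^{\lambda+1/2}=F(\gamma\tau)\overline{e(\sigma\gamma\tau/24)}\operatorname{Im}(\gamma\tau)^{\lambda+1/2}.
\end{align*}
The multiplier factors cancel exactly. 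Here one should check carefully that the branch of $(c\tau+d)^{1/2}$ used in $\nu_\eta$ matches the one in the slash operator, and that the summand is well defined on $\Gamma_\infty$-cosets, including the effect of $T$ and $-I$ under $\nu_\eta^{-r}$.

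Combining these identities with the invariance of $d\mu$, the sum over cosets assembles the translates $\gamma\mathfrak F$ into the strip $\Gamma_\infty\backslash\H=\{0\le u\le1,\ v>0\}$. One must account for $-I\in\Gamma_\infty$ acting trivially, so each point is covered once. This gives
\begin{align*}
\langle F,P_\sigma\rangle=\int_0^\infty\int_0^1F(\tau)\overline{e(\sigma\tau/24)}\,v^{\lambda-3/2}\,du\,dv.
\end{align*}

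Next insert the Fourier expansion \eqref{eqn: etamult fourier}. The $u$-integral of $e((m-\sigma)u/24)$ over $[0,1]$ vanishes unless $m=\sigma$. This is because $m\equiv\sigma\pmod{24}$, so $(m-\sigma)/24$ is an integer. What remains is
\begin{align*}
b(\sigma)\int_0^\infty e^{-2\pi\sigma v/12}v^{\lambda-3/2}\,dv=b(\sigma)\Gamma(\lambda-1/2)\left(\frac{\pi\sigma}{6}\right)^{1/2-\lambda},
\end{align*}
since the exponent is $e^{-2\pi\sigma v/24}\cdot e^{-2\pi\sigma v/24}$, the product of $e(\sigma\tau/24)$ and its conjugate.

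The main obstacle is the bookkeeping in the first step. The multiplier and branch conventions must cancel exactly, which relies on $\nu_\eta^r$ being a genuine multiplier system consistent with $\big|_{\lambda+1/2}$. The coset and fundamental-domain factor, including the $\pm I$ issue, must contribute exactly $1$. The rest is the standard computation.
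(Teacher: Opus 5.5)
Your proof is correct and is the same unfolding argument the paper uses; the only cosmetic difference is that the paper unfolds $\langle P_\sigma,F\rangle$, obtains $\overline{b(\sigma)}$, and conjugates via Hermitian symmetry, whereas you unfold $\langle F,P_\sigma\rangle$ directly. The bookkeeping you flag is automatic: the multiplier and branch factors combine into $\bigl|(c\tau+d)^{-\lambda-1/2}\bigr|^2=\abs{c\tau+d}^{-2\lambda-1}$ regardless of the branch chosen. Likewise, the unfolded integrand $F(\tau)\overline{e(\sigma\tau/24)}v^{\lambda+1/2}$ is invariant under $\tau\mapsto\tau+1$ because $\sigma\equiv r\pmod{24}$.
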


\begin{proof}
Note that for $\gamma = \abcd\in \slz$ we have 
\begin{gather*}e \left(\frac{\sigma \tau}{24}\right)\Big|_{\lambda+1/2}\gamma = (c\tau+d)^{-\lambda-1/2}e \left(\frac{\sigma\gamma\tau}{24}\right)\end{gather*}
and
\begin{gather*}
    \overline{F(\tau)} = \nu_\eta(\gamma)^{r}\overline{(c\tau+d)}^{-\lambda-1/2}\overline{F(\gamma\tau)}.
\end{gather*}   
From the definition of $P_{\sigma}$ we have
\begin{align*}
    \left\langle P_\sigma,F\right\rangle &= \int_{\mathfrak F}\overline{F(\tau)}\left(\sum_{\gamma\in {\Gamma_\infty}\backslash{\slz}}\nu_\eta(\gamma)^{-r}\left(e \left(\frac{\sigma \tau}{24} \right)\Big|_{\lambda+1/2}\gamma\right)\right)(\Im(\tau))^{\lambda+1/2}d\mu\\
&=\int_{\mathfrak F}\sum_{\gamma=\left(\begin{smallmatrix}
*&*\\c&d
\end{smallmatrix}
\right)\in {\Gamma_\infty}\backslash{\slz}}\abs{c\tau+d}^{-2\lambda-1}\overline{F(\gamma\tau)}e \left(\frac{\sigma\gamma\tau}{24} \right)(\Im(\tau))^{\lambda+1/2}d\mu.
\end{align*}
Since $ \Im(\gamma \tau) = \tfrac{\Im (\tau)}{\abs{c\tau+d}^2}$ we see $ \abs{c\tau+d}^{-2\lambda-1} = \left(\tfrac{\Im (\gamma \tau)}{\Im (\tau)}\right)^{\lambda+1/2}$, so 
\begin{align}
\left\langle P_\sigma,F\right\rangle & = \int_{\mathfrak F}\sum_{\gamma\in {\Gamma_\infty}\backslash{\slz}}\overline{F(\gamma\tau)}e \left(\frac{\sigma\gamma\tau}{24} \right)(\Im(\gamma\tau))^{\lambda+1/2}d\mu .\nonumber 
\end{align}

 The integrands decay exponentially because $ P_\sigma $ and $ F $  are cusp forms, so we can exchange integration and summation.  
Hence
\begin{align}
\langle P_\sigma,F\rangle &= \int_0^\infty y^{\lambda-3/2}e^{\frac{-2\pi \sigma y}{24} }I(y)dy,
\end{align}
where $ I(y) = \int_0^1 \overline{F(x+iy)}e \left(\frac{\sigma x}{24} \right) dx$. 
We have
\begin{align*}
    I(y)&=\int_0^1 \overline{F(x+iy)}e \left(\frac{\sigma x}{24}\right)dx 
    \\
    &=\sum_{m\equiv r\pmod{24}}\overline{b(m)}e \left(\frac{miy}{24} \right)\int_0^1 e \left(\frac{x(\sigma-m)}{24} \right)dx
    \\ 
   &= \overline{b(\sigma)}e^{\frac{-2\pi \sigma y}{24} }.
\end{align*}
Therefore
\begin{align*}
    \left\langle P_\sigma,F\right\rangle &= \overline{b(\sigma)}\int_0^\infty y^{\lambda-3/2}e^{\frac{-4\pi\sigma y}{24}}dy\\
    &=\left(\frac{\sigma\pi}{6} \right)^{1/2-\lambda}\Gamma(\lambda-1/2)\overline{b(\sigma)}.
\end{align*} 
 \cref{eqn:unfold-coeff} follows from the fact that the Petersson product is Hermitian.

\end{proof}

\section{Representations of the Kernel Function}\label{sec: Repn of Kernel}

\subsection{Poincar\'e Representation}
Our first objective is to compute the Shimura maps in terms of Poincar\'e series, using the Petersson inner product.  
 The following proposition is a first step in proving \cref{thm: kernel-function}.

\begin{proposition}\label{prop:Kernel-Poincare}
Let $ (r,6)=1 $ with $ 1\leq r\leq 23 $, let $ \lambda\geq 2 $ be an integer, and let $F(\tau) = \sum_{m\equiv r\pmod{24}} a(m)e \left(\frac{m\tau}{24} \right)\in S_{\lambda+1/2}(\slz,\nu_\eta^r)$. Let $D$ be a fundamental discriminant with $ (-1)^\lambda D=\abs{D}\equiv r\pmod{24} $  and define $\Omega_{\lambda,r,D}^\circ(z;\tau)$ for $z,\tau\in\mathbb H$ by 

\begin{multline}\label{eqn:Omegacirc}
\Omega^\circ_{\lambda,r,D}(z;\tau) = \Omega_D^\circ(z;\tau) := \left(\frac{\pi\abs{D}}{6} \right)^{\lambda-1/2}\frac{1}{\Gamma(\lambda-1/2)} \\
\times\sum_{n\geq 1}n^{\lambda-1}
\left(
\sum_{\substack{d\mid n}}
\left(
n/d\right)^\lambda \chardown{D}{d} \charup{12}{n/d}P_{\tfrac{\abs{D}n^2}{d^2}}(\tau)\right)
e \left(nz\right).
\end{multline}
Then $ \Omega_{D}^\circ(z;\tau) \in S_{\lambda+1/2}(\slz,\nu_\eta^r) $ for fixed $ z\in\mathbb H $  and 
\begin{gather*}
\Sh_{\abs{D}}(F)(z) = \left\langle F(\cdot),\Omega_{D}^\circ (-\overline{z};\cdot)\right\rangle.
\end{gather*}

\end{proposition}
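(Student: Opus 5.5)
We compute the Fourier coefficients of the right-hand side directly, using the unfolding identity \cref{prop:Prho-unfolding}, and match them against the definition \eqref{eqn:AAD-Shim} of $\Sh_t$ with $t=\abs{D}$.

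First we record that $\abs{D}$ is square-free, since $D$ is an odd fundamental discriminant with $\abs{D}\equiv r\pmod{24}$ and $(r,6)=1$. Also $\abs{D}n^2/d^2\equiv r\pmod{24}$ whenever $(n/d,6)=1$, while the factor $\charup{12}{n/d}$ kills the remaining terms. So every Poincar\'e series $P_{\abs{D}n^2/d^2}$ that actually appears lies in $S_{\lambda+1/2}(\slz,\nu_\eta^r)$. The same congruence gives $\abs{D}n^2/d^2\geq r$.

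Next we check that \eqref{eqn:Omegacirc} converges absolutely and locally uniformly in $\tau$ for fixed $z$. The coefficients $\alpha_\sigma(m)$ in \eqref{eqn:alpha-poincare} give the bound $\abs{P_\sigma(\tau)}\ll_\tau \sigma^{A}$ for some fixed $A$. The Kloosterman sums are trivially bounded by $c$, and we use $J_{\lambda-1/2}(x)\ll x^{\lambda-1/2}$ for the tail in $c$; convergence in $c$ is fine since $\lambda\geq 2$. This polynomial growth in $n$ is dominated by the factor $\abs{e(nz)}=e^{-2\pi n\Im z}$. Hence the series converges, and for fixed $z$ we get $\Omega_D^\circ(z;\cdot)\in S_{\lambda+1/2}(\slz,\nu_\eta^r)$. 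The same bounds justify exchanging the sum over $n$ with the Petersson integral, which is the step I expect to need the most care; everything else is bookkeeping.

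Replacing $z$ by $-\overline z$ gives $\overline{e(-n\overline z)}=e(nz)$, and all other constants are real. Hence
\begin{align*}
\left\langle F,\Omega_D^\circ(-\overline z;\cdot)\right\rangle=\left(\tfrac{\pi\abs{D}}{6}\right)^{\lambda-1/2}\frac{1}{\Gamma(\lambda-1/2)}\sum_{n\geq1}n^{\lambda-1}\sum_{d\mid n}(n/d)^{\lambda}\charup{D}{d}\charup{12}{n/d}\left\langle F,P_{\abs{D}n^2/d^2}\right\rangle e(nz).
\end{align*}
By \eqref{eqn:unfold-coeff} we have $\langle F,P_\sigma\rangle=(\sigma\pi/6)^{1/2-\lambda}\Gamma(\lambda-1/2)a(\sigma)$ with $\sigma=\abs{D}(n/d)^2$. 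The Gamma factors cancel, and the powers of $\pi\abs{D}/6$ cancel, leaving $(n/d)^{1-2\lambda}$. The $n$th coefficient is therefore
\begin{align*}
n^{\lambda-1}\sum_{d\mid n}(n/d)^{1-\lambda}\charup{D}{d}\charup{12}{n/d}a(\abs{D}(n/d)^2)=\sum_{d\mid n}d^{\lambda-1}\charup{D}{d}\charup{12}{n/d}a(\abs{D}(n/d)^2).
\end{align*}

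Finally we compare with \eqref{eqn:AAD-Shim}. The Dirichlet convolution of $L(s-\lambda+1,\chardown{\abs D}{\cdot})=\sum_d \chardown{\abs D}{d}d^{\lambda-1}d^{-s}$ with $\sum_m\charup{12}{m}a(\abs{D}m^2)m^{-s}$ gives exactly the coefficient above. This uses $\chardown{\abs D}{d}=\charup{D}{d}$, which holds by quadratic reciprocity for the Kronecker symbol because $D\equiv1\pmod 4$ and $d$ is coprime to $D$ in any term that contributes; it is also consistent with the sign convention in the definition of $\Sh_t$. This gives $\Sh_{\abs D}(F)(z)$.
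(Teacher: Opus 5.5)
Your proposal is correct and follows the paper's proof essentially step for step. You bound the Poincar\'e coefficients $\alpha_\sigma(m)$ polynomially (via $J_\nu(x)\ll x^\nu$ and $\abs{A_{\nu_\eta^r}(\sigma,m;c)}\leq c$) to get convergence, evaluate the inner product termwise with \cref{prop:Prho-unfolding}, and match the result with \eqref{eqn:AAD-Shim}; your justification of the sum/integral interchange is actually more careful than the paper's. The only cosmetic difference is that \eqref{eqn:Omegacirc} already carries $\chardown{D}{d}=\left(\frac{d}{D}\right)$, which equals $\left(\frac{d}{\abs{D}}\right)$ for $d>0$, so your quadratic reciprocity step (valid for $D\equiv 1\pmod 4$) is not needed.
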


 \begin{proof}
We first show the series in \eqref{eqn:Omegacirc} converges absolutely. We have the bound  $ J_\nu(x)\ll_\nu x^\nu $ for $ \nu>-1/2 $ from \cite[8.4.11.4]{gradshteinTableIntegralsSeries2015}. This, together with $ \abs{A_{\nu_\eta^r}(\sigma,m;c)} \leq c$, shows the coefficients  $ \alpha_{\frac{\abs{D}n^2}{d^2}}(m) $ from \eqref{eqn:alpha-poincare} are uniformly polynomially bounded in $ m $ and $ n $. Then
\begin{multline*}
\sum_{n\geq 1}n^{\lambda-1}
\left(
\sum_{\substack{d\mid n}}
\left(
n/d\right)^\lambda\abs{\chardown{D}{d} \charup{12}{n/d}
P_{\frac{\abs{D}n^2}{d^2}}(\tau)
}\right)
\abs{e \left(nz\right)}
\\
\ll_{\lambda,D}
\sum_{n\geq 1}n^{\lambda-1}
\left(
\sum_{\substack{d\mid n}}
\left(
n/d\right)^{\lambda}\sum_{\substack{m\geq r\\m\equiv r\pmod{24}}}\abs{\alpha_\frac{\abs{D}n^2}{d^2}(m)e \left(\frac{m\tau}{24} \right)} 
\right)
\abs{e \left(nz\right)},
\end{multline*}
 converges for $ \tau,z $ in $ \mathbb{H} $. Hence $ \Omega_{D}^\circ(z;\tau)\in S_{\lambda+1/2}(\slz,\nu_\eta^r) $ for fixed $ z $ because $ {P_{\frac{\abs{D}n^2}{d^2}}(\tau)\in S_{\lambda+1/2}(\slz,\nu_\eta^r)} $. 

Computing the inner product gives
\begin{multline*}
\langle F(\cdot),\Omega_D^\circ(-\overline{z},\cdot)\rangle 
= 
\left(\frac{\pi\abs{D}}{6} \right)^{\lambda-1/2}\frac{1}{\Gamma(\lambda-1/2)}
\\
\times \sum_{n\geq 1}n^{\lambda-1}
\left(
\sum_{\substack{d\mid n}}
\left(
n/d\right)^\lambda \chardown{D}{d} \charup{12}{n/d}\langle 
   F(\tau), P_{\tfrac{\abs{D}n^2}{d^2}}(\tau)
\rangle
    \right)
\overline{e \left(-n\overline{z}\right)},
\end{multline*}
and applying \cref{prop:Prho-unfolding} gives
\begin{align*}
\langle F(\cdot),\Omega_D^\circ (-\overline{z},\cdot)\rangle 
= \sum_{n\geq 1}\left(\sum_{d\mid n}d^{\lambda-1}\charup{d}{D}\charup{12}{n/d}a \left(\frac{\abs{D}n^2}{d^2} \right)\right)e(nz).
\end{align*}
Using \eqref{eqn:AAD-Shim} shows that $ \langle F(\cdot),\Omega_D^\circ(-\overline{z},\cdot)\rangle  = \Sh_{\abs{D}}(F)(z) $, as desired.

 \end{proof}

\subsection{Representation in Terms of Quadratic Forms}\label{subsec:repn kernel quad forms} 
The connection between $ \Sh_{\abs{D}} $ and quadratic forms is given through the functions  $ g_{\lambda,6}(z;D,D') $ from \eqref{eqn:g-quad-form-defn}. To prove \cref{thm: kernel-function} we next need to show $ g_{\lambda,6}(z;D,D')\in S_{2\lambda}(\Gamma_0(6)) $.

 \begin{proposition}\label{prop:glambda-modular}
    Let $ \lambda\geq 2 $, $ D$ be a fundamental discriminant, and $DD' $ be a discriminant with $ DD'>0 $ and $ (DD',6)=1 $.   Then $g_{\lambda,6}(z;D,D')\in S_{2\lambda}(\Gamma_0(6)) $.   
   
 \end{proposition}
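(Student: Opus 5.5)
We follow Kohnen's argument for \cite[(2)]{Kohnen-1985}, adapted to the twist by $\jacsym{12}{b}$. Three things must be checked: that the series converges absolutely and locally uniformly, that it transforms with weight $2\lambda$ under $\Gamma_0(6)$, and that it is holomorphic and vanishes at every cusp of $\Gamma_0(6)$.

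\emph{Convergence.} For $\lambda\geq 2$ the series $\sum_{Q\in\mathscr Q_{\Delta}}|Q(z,1)|^{-\lambda}$ with $\Delta=DD'>0$ converges absolutely and locally uniformly on $\mathbb H$; this is the standard estimate in Zagier's and Kohnen's work. The terms of $g_{\lambda,6}$ are bounded in modulus by these, since $|\omega_D(Q)\jacsym{12}{b}|\leq 1$. Hence $g_{\lambda,6}$ is holomorphic on $\mathbb H$.

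\emph{Invariance.} Take $\gamma=\abcd\in\Gamma_0(6)$. We have $Q(\gamma z,1)=(cz+d)^{-2}(Q\circ\gamma)(z,1)$, so that
\[
g_{\lambda,6}(\gamma z)=(cz+d)^{2\lambda}\sum_{Q}\jacsym{12}{b_Q}\omega_D(Q)\,(Q\circ\gamma)(z,1)^{-\lambda}.
\]
The map $Q\mapsto Q\circ\gamma$ permutes $\mathscr Q_{DD',6}$, since it preserves the discriminant and $6\mid Q(1,0)$ when $6\mid c$. Moreover $\omega_D$ is $\slz$-invariant by Kohnen's remark. It remains to show that the middle coefficient $b$ modulo $12$ is preserved. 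Write $Q=[6a',b,c']$. The new middle coefficient is
\[
b'=2\cdot 6a'\alpha\beta+b(\alpha\delta+\beta\gamma)+2c'\gamma\delta .
\]
The first term is divisible by $12$. Using $6\mid\gamma$ and $\alpha\delta-\beta\gamma=1$, we get $2c'\gamma\delta\equiv 0\pmod{12}$ and $\alpha\delta+\beta\gamma=1+2\beta\gamma\equiv 1\pmod{12}$. Therefore $b'\equiv b\pmod{12}$, so each $\mathscr Q^{(t)}_{DD',6}$ is $\Gamma_0(6)$-stable, as already noted in \eqref{eqn: Qt quad defined}. Reindexing the sum gives $g_{\lambda,6}\big|_{2\lambda}\gamma=g_{\lambda,6}$.

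\emph{Cuspidality.} This is the main obstacle. At $i\infty$ we follow Zagier and Kohnen: separate the forms with $a\neq 0$ from any with $a=0$. Terms with $a=0$ can only occur when $DD'$ is a square, and they are exactly the source of possible non-cuspidal contributions. For $a\neq0$, group the forms by $a$ and by $b$ modulo $2a$. Poisson summation in the translation $z\mapsto z+1$ then expresses the Fourier coefficients through sums of $e(\cdots)$ weighted by the characters. The constant term is a multiple of $\sum_{b\,(2a)}\jacsym{12}{b}\omega_D([a,b,c])$. We expect this to vanish, or the corresponding series to vanish, by the twisting character $\jacsym{12}{b}$: for instance, replacing $b$ by $-b$ via the matrix $\mathrm{diag}(1,-1)$-type symmetry, or using that $\jacsym{12}{\cdot}$ sums to zero over a full period, kills the $n=0$ term. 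When $DD'$ is a square, the $a=0$ terms $[0,b,c]$ with $b=\pm\sqrt{DD'}$ must be treated separately. These give $\sum_c(bz+c)^{-\lambda}$, an Eisenstein-like sum whose constant term is zero since $\lambda\geq2$ (Lipschitz formula). The contributions from $\pm b$ cancel, or combine into positive-index exponentials, because $\jacsym{12}{-b}=\jacsym{12}{b}$ while $\omega_D$ changes by $\jacsym{D}{-1}$ and $(-b)^{-\lambda}$ changes by $(-1)^\lambda$; the relation $(-1)^\lambda D=|D|$ should force the required cancellation.

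For the other cusps $0,1/2,1/3$ of $\Gamma_0(6)$, we use the Atkin–Lehner operators $\hat W_d$ of \eqref{eqn:Wd on Q}, which permute the cusps transitively. By \eqref{eqn:Wd on Q}, $\hat W_d$ maps $\mathscr Q_{DD',6}$ to itself and sends $b\mapsto\pm5b+12(\cdots)$ or $b\mapsto -b$. This multiplies $\jacsym{12}{b}$ by $\jacsym{12}{5}=-1$ or by $1$, and $\omega_D$ is invariant up to a computable sign. Consequently $g_{\lambda,6}\big|_{2\lambda}W_d=\pm g_{\lambda,6}$. Cuspidality at $i\infty$ therefore transfers to all cusps, and hence $g_{\lambda,6}\in S_{2\lambda}(\Gamma_0(6))$.
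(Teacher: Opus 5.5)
Your convergence and $\Gamma_0(6)$-invariance steps are correct and agree with the paper; your computation $b'\equiv b\pmod{12}$ is the ``short computation'' the paper omits. Your Atkin--Lehner transfer is also valid: by \eqref{eqn:Wd on Q}, $\hat W_d$ permutes $\mathscr Q_{DD',6}$, multiplies $\jacsym{12}{b}$ by $\mu(d)$ and $\omega_D$ by $\jacsym{D}{d}$, so $g_{\lambda,6}\big|_{2\lambda}W_d=\pm g_{\lambda,6}$.

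The gap is the step you flag as the main obstacle, cuspidality at $i\infty$. You leave it as an expectation that rests on cancellation mechanisms that do not work.
\begin{itemize}
\item The substitution $[a,b,c]\mapsto[a,-b,c]$ changes neither $\jacsym{12}{b}$ nor $\omega_D$, since the two forms represent the same integers. It therefore pairs equal terms and cancels nothing.
\item Nothing you say shows that the Weyl sum $S_{a,D,D'}(DD',0)$ of \eqref{eqn:quad-SA-weyl-sum} vanishes. Here $b$ runs only over roots of $b^2\equiv DD'\pmod{4a}$, and $\omega_D$ depends on $b$ when $(a,D)>1$, so no full-period argument applies.
\item For the $a=0$ terms under $(-1)^\lambda D=\abs D$, the signs you list multiply to $\jacsym{D}{-1}(-1)^\lambda=1$. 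So the terms for $Q$ and $-Q$ are \emph{equal} and double rather than cancel; this is the factor $2$ in $g_{\lambda,6}=2(g^0+\sum_a g^a)$ in \cref{prop:four-exp-g-full-quad}.
\end{itemize}

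The missing observation is that no cancellation is needed, and the twist plays no role in cuspidality. Fix $a\neq 0$ and $b$. The $k$th Fourier coefficient of $\sum_{n\in\Z}\bigl(a(z+n)^2+b(z+n)+c\bigr)^{-\lambda}$ is $\int_{\mathbb R+iy}(az^2+bz+c)^{-\lambda}e(-kz)\,dz$. The integrand is holomorphic on $\mathbb H$ (the roots are real) and is $O(\abs{z}^{-2\lambda})$. So for $k\le 0$, moving the line to $\Im z\to\infty$ shows the coefficient is $0$. Thus the constant term you describe is $S_{a,D,D'}(DD',0)$ times zero, and the negative-index coefficients, which you do not address, vanish as well. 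For $a=0$, Lipschitz's formula already gives only positive frequencies when $\lambda\ge 2$, so these terms are not a source of non-cuspidal contributions either.

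The paper's argument is more direct still. For arbitrary $\gamma\in\slz$ it bounds $\abs{g_{\lambda,6}\big|_{2\lambda}\gamma}$ termwise in absolute value. The forms with $Q\circ\gamma=[0,*,*]$ contribute $\ll y^{1-\lambda}$, and the rest contribute $\ll y^{1-2\lambda}$, using $\#\{t\bmod 2A: t^2\equiv DD'\pmod{4A}\}\ll\abs{A}^{\varepsilon}$. This proves vanishing at every cusp at once and makes your Atkin--Lehner step unnecessary.
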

 \begin{proof}
   We first note that the series representation for $ g_{\lambda,6} $  converges absolutely and uniformly on compacta.
 Let $ Q=[a,b,c]\in \mathscr{Q}_{DD',6}^{(t)} $ and $ \left(\begin{smallmatrix}
\alpha&\beta\\\gamma&\delta
\end{smallmatrix}
\right)\in\slz $. We have
\begin{align}\label{eqn:Qlambda-action}
Q(z,1)^{-\lambda}\Big|_{2\lambda}\left(\begin{matrix}
\alpha&\beta\\\gamma&\delta
\end{matrix}
\right)&=
\nonumber 
\frac{(\gamma z+\delta)^{-2\lambda}}{\left(a \left(\frac{\alpha z+\beta}{\gamma z+\delta} \right)^2+b \left(\frac{\alpha z+\beta}{\gamma z+\delta} \right)+c\right)^\lambda}
\\
\nonumber
&=\left(a(\alpha z+\beta)^2+b(\alpha z+\beta)(\gamma z+\delta)+c(\gamma z+\delta)^2\right)^{-\lambda}\\
&=Q'(z,1)^{-\lambda}.
\end{align}
For brevity write $ \tilde{\omega}_D([a,b,c]) = \charup{12}{b}\omega_D([a,b,c]) $.
A short computation shows that $ {\left(\begin{smallmatrix}
\alpha&\beta\\\gamma&\delta
\end{smallmatrix}
\right)\in\Gamma_0(6)} $ acts as a bijection on $ \mathscr{Q}_{DD',6}^{(t)}$. Moreover, $ \omega_D(Q) $ is $ \slz$-invariant on $ \mathscr{Q}_{DD',6} $ \cite[p. 238]{Kohnen-1985}  so $ \tilde\omega_D(Q)=\tilde\omega_D \left(Q\circ \left(\begin{smallmatrix}
\alpha&\beta\\\gamma&\delta
\end{smallmatrix}
\right)\right)$. Hence, the action of $ \left(\begin{smallmatrix}
\alpha&\beta\\\gamma&\delta
\end{smallmatrix}
\right)\in\Gamma_0(6) $ permutes the summands of $ g_{\lambda,6}, $ and \eqref{eqn:Qlambda-action}, together with invariance of $ \tilde\omega_D $, shows these functions satisfy the modularity condition
$
 g_{\lambda,6}(z;D,D')\Big|_{2\lambda}\left(\begin{matrix}
\alpha&\beta\\\gamma&\delta
\end{matrix}
\right) = g_{\lambda,6}(z;D,D')
$
 for $ \left(\begin{smallmatrix}
\alpha&\beta\\\gamma&\delta
\end{smallmatrix}
\right)\in \Gamma_0(6) $. 
 
We now prove that $ g_{\lambda,6} $ vanishes at the cusps. Let $ \left(\begin{smallmatrix}
\alpha&\beta\\\gamma&\delta
\end{smallmatrix}
\right)\in\slz $. Then we have

\begin{align*}
\abs{g_{\lambda,6}(z;D,D')\Big|_{2\lambda}\left(\begin{matrix}
\alpha&\beta\\\gamma&\delta
\end{matrix}
\right)}\leq 
S_0+\sum_{
    \substack{Q\in \mathscr{Q}_{DD',6}\\
    Q'=\left[A,B,\frac{B^2-DD'}{4A} \right]=Q\circ \gamma\\
    A\neq 0}
}\abs{Az^2+Bz+\frac{B^2-DD'}{4A} }^{-\lambda},
\end{align*}
where $ S_0 = \sum_{Q'=[0,B,C]} \abs{Bz+C}^{-\lambda}$ with the sum taken over those $ Q\in\mathscr{Q}_{DD',6} $ with $ {Q'=Q\circ \left(\begin{smallmatrix}
\alpha&\beta\\\gamma&\delta
\end{smallmatrix}
\right)=[0,*,*]} $ having discriminant $ DD' $. 

The sum $ S_0 $ vanishes by comparison:
\begin{align*}
S_0&\leq \sum_{\substack{B\in\Z\\B^2=DD'}}\sum_{C\in\Z}\abs{Bz+C}^{-\lambda}
\ll_{DD',\lambda}\int_0^\infty \frac{du}{(u^2+y^2)^{\lambda/2}}
\ll_{DD',\lambda}y^{1-\lambda},
\end{align*}
where in the last comparison we substitute $ ty=u $. Hence $ S_0 $ vanishes as $ y\to\infty$. For the other sum, write $ \abs{Az^2+Bz+C} = \abs y^2\abs{A(\frac{x}{y}+i+\frac{B}{2Ay})^2-\frac{DD'}{4Ay^2}}\asymp_{DD,\lambda}\abs{y}^2 \abs{A(\frac{x}{y}+i+\frac{B}{2ay})^2} $ as $ y\to\infty $. Then we have
\begin{align*}
\sum_{
    \substack{Q\in \mathscr{Q}_{DD',6}\\
    Q'=\left[A,B,\frac{B^2-DD'}{4A} \right]=Q\circ \gamma\\
    A\neq 0}
}\abs{Az^2+Bz+\frac{B^2-DD'}{4A} }^{-\lambda}
&\ll_{DD',\lambda} y^{-2\lambda}\sum_{A\in\Z\setminus\{0\}}\abs{A}^{-\lambda}\sum_{\substack{B\in\Z\\ B^2\equiv DD'\pmod{4A}}}\abs{\frac{x}{y}+\frac{B}{2Ay}+i}^{-2\lambda}.
\end{align*}
Write $ B=t+2nA $ for $ t $ a residue modulo $ 2A $ and $ n\in\Z $. Then we have
    \begin{align*}
\sum_{
    \substack{Q\in \mathscr{Q}_{DD',6}\\
    Q'=\left[A,B,\frac{B^2-DD'}{4A} \right]=Q\circ \gamma\\
    A\neq 0}
}\abs{Az^2+Bz+\frac{B^2-DD'}{4A} }^{-\lambda}
&\ll_{DD',\lambda}
y^{-2\lambda}\sum_{A\in\Z\setminus \{0\}}\abs{A}^{-\lambda}\sum_{\substack{t\pmod{2A}\\t^2\equiv DD'\pmod{4A}}}\sum_{n\in\Z}\abs{u+\frac{n}{y}}^{-2\lambda},
\end{align*}
where $ u = x/y+t/2Ay +i $. Note $ \int_{-\infty}^\infty \abs{u+v/y}^{-2\lambda}dv\ll_\lambda y $ for $ \lambda >1$ and $ \Im(u)=1 $, so $ \sum_{n\in\Z}\abs{u+\frac{n}{y}}^{-2\lambda}\ll_{\lambda}y $. Because  $ {\sum_{\substack{t\pmod{2A}\\t^2\equiv DD'\pmod{4A}}}1\ll_{DD',\varepsilon}\abs{A}^\varepsilon} $ for all $ \varepsilon>0 $ by \cite[p.~236]{Koh-Est-Four-Siegel93}, we have
\begin{align*}
\sum_{
    \substack{Q\in \mathscr{Q}_{DD',6}\\
    Q'=\left[A,B,\frac{B^2-DD'}{4A} \right]=Q\circ \gamma\\
    A\neq 0}
}\abs{Az^2+Bz+\frac{B^2-DD'}{4A} }^{-\lambda}\ll_{DD',\lambda,\varepsilon}y^{1-2\lambda}
\end{align*}
as $ y\to\infty $. Combined with the estimate for $ S_0 $ we have that $ \abs{g_{\lambda,6}(z;D,D')\Big|_{2\lambda}\left(\begin{smallmatrix}
\alpha&\beta\\\gamma&\delta
\end{smallmatrix}
\right)} \to 0$ as $ y\to \infty $, and $ g_{\lambda,6} $ is a cusp form.

 \end{proof}
 
 We need to compute the Fourier expansion of $g_{\lambda,6}$ in order to prove \cref{thm: kernel-function}. There is a subtlety with the character, so we present the derivation here, though it is a modification of \cite[Prop. 2]{Kohnen-1985}. This Fourier expansion is expressed in terms of a twisted quadratic Weyl sum $ S. $ Let $ a>0 $ with $ 6\mid a $, $ D $ be a fundamental discriminant, and let $DD'$ be a discriminant with $DD'>0$ and $ (DD',6)=1 $. Define
\begin{align}\label{eqn:quad-SA-weyl-sum}
S_{a,D,D'}(DD',n):=\sum_{\substack{b\pmod{2a}\\b^2\equiv DD'\pmod{4a}}}\jacsym{12}{b}\omega_D \left(a,b,\frac{b^2-DD'}{4a} \right)e \left(\frac{nb}{2a} \right).
\end{align}

\begin{remark}
If $ \chi:\Z\to \C $ is a Dirichlet character then we extend $ \chi $ by $ \chi(x) = 0 $ for $ x\not\in \Z $. 
\end{remark}

 \begin{proposition}\label{prop:four-exp-g-full-quad}
 Let $ \lambda\geq 2 $. Let $ D$ be a fundamental discriminant with $ (-1)^\lambda D>0 $, $ m\geq 1 $, and $ \abs{D}m $ a discriminant coprime to $ 6 $. Let $ S $ be the twisted quadratic Weyl sum \eqref{eqn:quad-SA-weyl-sum}. Then
 \begin{align*}
 g_{\lambda,6}(z;D,(-1)^\lambda m ):=\sum_{\substack{Q=[*,b,*]\in \mathscr{Q}_{\abs{D}m,6}}}\charup{12}{b}\omega_D(Q)Q(z,1)^{-\lambda} = \sum_{n\geq 1}c_{\lambda,6}(n,D,(-1)^\lambda m)e(nz)
 \end{align*}
where
\begin{multline}\label{eqn:fourier-exp-g-full-quad}
c_{\lambda,6}(n,D,(-1)^\lambda m)\\
:=\frac{2(-2\pi)^\lambda}{\Gamma(\lambda)}\left(\slfrac{n^2}{\abs{D}m} \right)^{\frac{\lambda-1}{2}}\Bigg[
    (-1)^{\lfloor\frac{\lambda+1}{2}\rfloor}\charup{12}{\abs{D}}\charup{D}{\slfrac{n}{\sqrt{\frac{m}{\abs{D}}}}}\charup{12}{\sqrt{\slfrac{m}{\abs{D}}}}\abs{D}^{-1/2}\\
    +\pi\sqrt{2}\left(\slfrac{n^2}{\abs{D}m} \right)^{1/4}\sum_{\substack{a\geq 1\\6\mid a}}a^{-1/2}S_{a,D,(-1)^\lambda m}(\abs{D}m,n)J_{\lambda-1/2}\left(\frac{\pi n\sqrt{\abs{D}m}}{a}\right)
\Bigg].
\end{multline}

 \end{proposition}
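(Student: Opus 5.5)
I will follow Kohnen's computation \cite[Prop. 2]{Kohnen-1985} and split the sum defining $g_{\lambda,6}(z;D,(-1)^\lambda m)$ according to whether the leading coefficient $a=Q(1,0)$ vanishes. Write $\Delta:=\abs{D}m$.

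\textbf{Terms with $a=0$.} These occur only when $\Delta$ is a perfect square. They are $Q=[0,b,c]$ with $b^2=\Delta$, so $b=\pm\sqrt{\Delta}$ and $c\in\Z$ is arbitrary. The genus character $\omega_D([0,b,c])$ is evaluated by taking $Q(x,y)=bxy+cy^2$. With $y=1$ and suitable $x$ this represents $b x+c$, and it equals $\charup{D}{\cdot}$ of $c$ up to the factor coming from $\sqrt{\Delta}=\abs{D}\sqrt{m/\abs{D}}$. The weight $\charup{12}{b}$ contributes $\charup{12}{\abs{D}}\charup{12}{\sqrt{m/\abs{D}}}$ together with the sign $\charup{12}{-1}=1$.

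One then sums $(bz+c)^{-\lambda}$ over $c$ running through residue classes modulo $\abs{D}$ weighted by $\charup{D}{c}$. Apply the Lipschitz formula
\[
\sum_{c\in\Z}(z+c)^{-\lambda}=\frac{(-2\pi i)^\lambda}{\Gamma(\lambda)}\sum_{n\ge1}n^{\lambda-1}e(nz),
\]
after shifting by $c/\abs{D}$. The resulting Gauss sum $\sum_{c}\charup{D}{c}e(nc/\abs{D})=\charup{D}{n}\sqrt{D}$ collapses the expression. It yields the first bracketed term, including the factor $\charup{D}{n/\sqrt{m/\abs{D}}}$, which vanishes unless $\sqrt{m/\abs{D}}\mid n$. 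The two signs $b=\pm\sqrt\Delta$ combine, using $\charup{D}{-1}=\operatorname{sgn}D=(-1)^\lambda$, with $i^\lambda$ and $\sqrt D$ ($=i\sqrt{\abs D}$ when $D<0$) to give the sign $(-1)^{\lfloor(\lambda+1)/2\rfloor}$ and the factor $\abs{D}^{-1/2}$. The overall normalization $2(-2\pi)^\lambda/\Gamma(\lambda)$ times $(n^2/\Delta)^{(\lambda-1)/2}$ should match after tracking $b^{-\lambda}$ and $n^{\lambda-1}$.

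\textbf{Terms with $a\neq0$.} Here $6\mid a$. The map $Q\mapsto -Q$ sends $[a,b,c]\mapsto[-a,-b,-c]$. Since $Q(z,1)^{-\lambda}$ picks up $(-1)^\lambda$ and $\charup{12}{-b}\omega_D(-Q)=\charup{D}{-1}\charup{12}{b}\omega_D(Q)=(-1)^\lambda\charup{12}{b}\omega_D(Q)$, the signs cancel, so the terms with $a<0$ equal those with $a>0$. This accounts for the factor $2$.

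For fixed $a>0$, write $b=b_0+2an$ with $b_0$ modulo $2a$. One checks that $\charup{12}{b}$ and $\omega_D$ depend only on $b\bmod 2a$. For $\charup{12}{b}$ this holds because $12\mid 2a$. For $\omega_D$ it follows from $\mathrm{SL}_2(\Z)$-invariance under $z\mapsto z+n$. Then
\[
Q(z,1)=a\bigl(z+n+\tfrac{b_0}{2a}\bigr)^2-\tfrac{\Delta}{4a},
\]
and one applies Poisson summation in $n$ to $\phi(w)=(a w^2-\Delta/4a)^{-\lambda}$. The Fourier transform of $(w^2-\Delta/4a^2)^{-\lambda}$ along a horizontal line in $\mathbb H$ is the standard integral evaluated in \cite{Kohnen-1985} (via Zagier's computation). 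It gives
\[
\frac{(-1)^{\lfloor\lambda/2\rfloor}\cdots\,\pi^{\lambda+1/2}}{\Gamma(\lambda)}\,n^{\lambda-1/2}\,(\Delta/a^2)^{(1-2\lambda)/4}\,J_{\lambda-1/2}\!\left(\frac{\pi n\sqrt{\Delta}}{a}\right)
\]
for $n\ge1$, and $0$ for $n\le 0$ (by contour shifting). Collecting the phases $e(nb_0/2a)$ produces $S_{a,D,(-1)^\lambda m}(\Delta,n)$. Simplifying the powers of $a$, $n$ and $\Delta$ gives the second bracketed term, including $\pi\sqrt2\,(n^2/\Delta)^{1/4}a^{-1/2}$. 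Absolute convergence of the $a$-sum for $\lambda\ge2$ comes from $J_{\lambda-1/2}(x)\ll x^{\lambda-1/2}$ and $\abs{S}\ll a^{1/2+\varepsilon}$, or even $\abs S\ll a^{\varepsilon}\cdot\#\{b\}$.

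\textbf{Main obstacle.} The delicate step is the bookkeeping of constants and signs. This includes the exact Bessel-integral normalization with its $i$-powers, and the interaction of $\charup{12}{b}$ with $\omega_D$ in the $a=0$ term, where the summand is not literally Kohnen's. That is the ``subtlety with the character.'' I would first verify the $a=0$ term by direct computation for $\lambda$ even and odd separately, then check the Poisson term against \cite[Prop. 2]{Kohnen-1985} by setting the twist trivial.
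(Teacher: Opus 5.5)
Your proposal follows the paper's proof essentially step for step: the factor $2$ from $Q\mapsto -Q$ (using $\charup{D}{-1}=(-1)^\lambda$ and the evenness of $\charup{12}{\cdot}$), Lipschitz's formula plus the Gauss sum for the $a=0$ terms (which force $m=\abs{D}u^2$), and, for $6\mid a$, grouping $b$ modulo $2a$ and using Kohnen's evaluation of the Fourier coefficients of $\sum_{k\in\Z}\bigl(a(z+k)^2+b(z+k)+\frac{b^2-\Delta}{4a}\bigr)^{-\lambda}$. One correction to your displayed constant: that $n$th coefficient is $\frac{(-1)^\lambda 2^{\lambda+1/2}\pi^{\lambda+1}n^{\lambda-1/2}}{\Gamma(\lambda)\sqrt{a}\,\Delta^{\lambda/2-1/4}}J_{\lambda-1/2}\bigl(\frac{\pi n\sqrt{\Delta}}{a}\bigr)e\bigl(\frac{nb}{2a}\bigr)$, with sign $(-1)^\lambda$ rather than $(-1)^{\lfloor\lambda/2\rfloor}$ (the latter belongs to the $a=0$ term) and with $\pi^{\lambda+1}$ rather than $\pi^{\lambda+1/2}$; this is exactly what lets both pieces sit under the common prefactor $2(-2\pi)^\lambda/\Gamma(\lambda)$.
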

 \begin{proof}
    Because $ (-1)^\lambda D>0 $ we have $ \charup{12}{-b}\omega_D(-a,-b,-c)  =(-1)^{\lambda} \charup{12}{b}\omega_D(a,b,c)$. Hence, for fixed $D,m,\lambda $, we can write  
 \begin{align*}
g_{\lambda,6}(z;D,(-1)^\lambda m) = 2 \left(g^0(z)+\sum_{\substack{a\geq 1\\6\mid a}}g^a(z)\right),
 \end{align*}
 where
 \begin{gather*}
 g^0(z):=\sum_{\substack{b>0\\b^2=\abs{D}m\\c\in\mathbb{Z}}}\charup{12}{b}\omega_D(0,b,c)(bz+c)^{-\lambda},\hspace{2em}\text{and}\\
 g^a(z):=\sum_{\substack{b\in \mathbb{Z}\\b^2\equiv \abs{D}m\pmod{4a}}}\charup{12}{b}\omega_D \left(a,b,\frac{b^2-\abs{D}m}{4a} \right)\left(az^2+bz+\frac{b^2-\abs{D}m}{4a} \right)^{-\lambda}.
 \end{gather*}

 For $g^0$ we have the discriminant $ \operatorname{disc}([0,b,c])=b^2=\abs{D}m $. Now $D\mid b$ since $ D $ is an odd fundamental discriminant, so $m=\abs{D}u^2$ for some $ u\in\Z_{>0}$. Hence
 \begin{align*}
 g^0(z) = \sum_{\substack{b>0\\b^2=\abs{D}^2u^2\\c\in\Z}}\charup{12}{b}\omega_D(0,b,c)(bz+c)^{-\lambda}.
 \end{align*}
 Note that $ \charup{12}{b} \omega_D(0,b,c)=\charup{12}{Du}\charup{D}{c}. $ Splitting the $c$ into residues modulo $D$ gives
 \begin{align*}
 g^0(z)
 &=\charup{12}{Du}\sum_{t\pmod{D}}\charup{D}{t}\abs{D}^{-\lambda}\sum_{n\in \Z}\left(uz+\frac{t}{\abs{D}}+n \right)^{-\lambda}. 
 \end{align*}
 The sum is exactly the one that appears on \cite[p. 247]{Kohnen-1985}, which is computed using Lipschitz's formula and evaluation of a Gauss sum. Hence, using this evaluation (and correcting a minor typo in \cite[(19)]{Kohnen-1985}), we obtain
 \begin{align*}
 g^0(z) = \charup{12}{Du}(-1)^{\lfloor\frac{\lambda}{2}\rfloor}\abs{D}^{-\lambda+1/2}\frac{(2\pi)^\lambda}{\Gamma(\lambda)}\sum_{n\geq1}\charup{D}{n}n^{\lambda-1}e(nuz).
 \end{align*}
 Reindexing the sum and recalling that $ u=\sqrt{m/\abs D} $, we see that the contribution to the Fourier coefficient of $e(nz)$ from $g^0$ is
 \begin{align*}
   \charup{12}{D}\charup{12}{\sqrt{m/\abs D}}(-1)^{\lfloor \frac{\lambda}{2}\rfloor }\abs{D}^{-\lambda+1/2}\frac{(2\pi)^\lambda}{\Gamma(\lambda)}\charup{D}{n\Big/\sqrt{\frac{m}{\abs D}}}\left(n\Big/\sqrt{\frac{m}{\abs D}}\right)^{\lambda-1}  ,
 \end{align*}
 bearing in mind our definition of $ \charup{D}{x}=0$ if $ x\not\in\Z $  and similarly for $ \charup{12}{\cdot}. $ This simplifies to the first term of \eqref{eqn:fourier-exp-g-full-quad} because $ (-1)^\lambda (-1)^{\lfloor\frac{\lambda+1}{2}\rfloor} = (-1)^{\lfloor\frac{\lambda}{2}\rfloor} $.

We now consider $ g^a, $ where $ 6\mid a $. We have
 \begin{align*}
 g^a(z) = \sum_{\substack{b\pmod{2a}\\b^2\equiv \abs{D}m\pmod{4a}}}\charup{12}{b}\omega_D \left(a,b,\frac{b^2-\abs{D}m}{4a} \right)\sum_{n\in \Z}\left(a(z+n)^2+b(z+n)+\frac{b^2-\abs{D}m}{4a} \right)^{-\lambda}
 \end{align*}
 because 
 \[ a(z+n)^2+b(z+n)+\frac{b^2-\abs{D}m}{4a} = az^2+(b+2an)z+\frac{(b+2an)^2-\abs D m}{4a}   .
 \] 
 The inner sum also appears in \cite[p. 247]{Kohnen-1985}, and has $ n $th Fourier coefficient given by 
 \begin{align*}
 \frac{(-1)^\lambda 2^{\lambda+1/2}\pi^{\lambda+1}n^{\lambda-1/2}}{(\abs{D}m)^{\lambda/2-1/4}\Gamma(\lambda)\sqrt{a}}J_{\lambda-1/2}\left(\frac{\pi n\sqrt{\abs{D}m}}{a} \right)e(nb/2a). 
 \end{align*}
 Hence summing over $ b\pmod{2a} $ with $ b^2\equiv \abs{D}m\pmod{4a} $ gives the quadratic Weyl sum $ S_{a,D,(-1)^\lambda m}(\abs{D}m,n) $  and the second term in the expansion of \eqref{eqn:fourier-exp-g-full-quad}.

 \end{proof}

\subsection{Proof of Equivalence}\label{subsec:proof-equiv-kernel}
\begin{remark} 
If $m$ and $n$ are coprime integers with odd parts $m_o$ and $n_o$, respectively, the two characters $ \jacsym{m}{n}\text{ and }\jacsym{n}{m} $ are related by the quadratic reciprocity relation \cite[(5.7)]{stromberg2013-weil-quad} given by

\begin{align}\label{eqn:quad-rec}
\charup{m}{n}\chardown{m}{n} = (m,n)_{\infty}(-1)^{\frac{(m_o-1)(n_o-1)}{4}},
\end{align}
where $ (m,n)_\infty $ is the Hilbert symbol at infinity, which is $ -1 $ if both $ m,n<0 $ and $ 1 $ otherwise.   

\end{remark}

From the Fourier expansions of $ P_{\lambda+1/2,\sigma} $ and $ g_{\lambda,6} $ (\eqref{eqn:alpha-poincare} and \eqref{eqn:fourier-exp-g-full-quad}, respectively) we see that we will need a relation between Kloosterman sums and quadratic Weyl sums. The necessary identity is the main result of concurrent work by the author, proven by writing $ \nu_\eta^r $ as a sum of matrix coefficient functions for the Weil representation attached to a finite quadratic module and modifying the methods of \cite{AAW2025-GenK-Weyl-Bound}.
\begin{theorem}\cite[Thm. 1.1]{Williams2026Powers}\label{thm: relate-Kloost-Weyl}
Let $ 1\leq r\leq 23 $ be such that $ (r,6)=1 $, suppose that $ D $ is a fundamental discriminant with $ (-1)^{\frac{r-1}{2} } D=\abs{D}\equiv r\pmod{24} $, and suppose that $ m $ is positive with  $ m \equiv r\pmod{24} $. Then for integral $ a,n>0 $ we have  
\begin{multline}\label{eqn:kloost-weyl identity}
S_{6a,D,(-1)^{\frac{r-1}{2} }m}(\abs{D}m,n) = \varepsilon_r i^{-1/2}\jacsym{12}{r}2 \sqrt{3}\\
\times \sum_{d\mid (n,a)}\sqrt{d/a}\jacsym{d}{D}\jacsym{12}{n/d}A_{\nu_\eta^r}\left(\frac{\abs{D}n^2}{d^2},m;\frac{a}{d} \right).
\end{multline}

\end{theorem}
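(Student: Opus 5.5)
The identity \eqref{eqn:kloost-weyl identity} is proved in \cite{Williams2026Powers}; here is the route I would take. The idea is to realize $\nu_\eta^r$ inside the Weil representation of a rank-one lattice, and then run a Kohnen / Duke--Imamo\=glu--T\'oth style computation that turns Weil-representation Kloosterman sums into twisted quadratic Weyl sums.

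\textbf{Step 1: realize $\nu_\eta^r$ through a Weil representation.} I would start from the theta decomposition
\begin{align*}
\eta(\tau)=\sum_{x\geq 1}\jacsym{12}{x}e\left(\frac{x^2\tau}{24}\right).
\end{align*}
This exhibits $\eta$ as $\sum_{h\pmod{12}}\jacsym{12}{h}\theta_h$. Here the $\theta_h$ are the components of the vector-valued theta function of the lattice $L=\Z$ with quadratic form $6x^2$, whose discriminant group is $\Z/12\Z$. Consequently $\nu_\eta(\gamma)$ is a fixed linear combination of matrix coefficients $\rho_L(\gamma)_{h,h'}$ of the Weil representation. For general $r$ I would replace $L$ by the lattice attached to $\eta^r$, or twist by the character of $\Z/12\Z$ singled out by $\jacsym{12}{r}$. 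This is where the factors $\varepsilon_r\jacsym{12}{r}$ should first appear.

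\textbf{Step 2: write $A_{\nu_\eta^r}$ as a Weyl sum and compare.}
\begin{itemize}
\item Insert the explicit formula for $\rho_L(\gamma)$ (Shintani, Str\"omberg) into the Kloosterman sum $A_{\nu_\eta^r}(\sigma,m;c)$. Each matrix coefficient becomes a normalized Gauss sum over $\Z/c\Z$.
\item Open these Gauss sums and carry out the sum over $d\pmod c$. This produces a Sali\'e-type expression: a sum over $b\pmod{2c}$ (suitably lifted to modulus $24c$) with $b^2\equiv \sigma m\pmod{4c}$, weighted by $\jacsym{12}{b}$.
\item This is the $D=1$ shape of $S_{6a,D,D'}(DD',n)$. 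I would therefore first prove the identity with $\abs D=1$, where $\omega_D\equiv 1$; then only $d=1$ contributes and $\sigma=n^2$.
\end{itemize}

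\textbf{Step 3: insert the genus character.} Following Kohnen's Prop.~5 in \cite{Kohnen-1985}, I would expand $\omega_D([6a,b,c])$ using multiplicativity. One splits $\omega_D$ as a product of the character $\jacsym{D}{\cdot}$ evaluated at the values $6a$ or $c$, whichever is coprime to $D$, and then uses a Gauss sum modulo $\abs D$ to linearize it. Applied to the $D=1$ identity with discriminant $\abs D m$ and frequency $n$, this should yield the divisor sum over $d\mid(n,a)$.
\begin{itemize}
\item The $d\mid(n,a)$ terms come from the non-primitive part where $\gcd(a,n)$ interacts with the twist, as in the Hecke-type relation for Sali\'e sums.
\item The weights $\jacsym{d}{D}\jacsym{12}{n/d}$ and the argument $\abs Dn^2/d^2$ then match the left side.
\end{itemize}
Throughout, quadratic reciprocity \eqref{eqn:quad-rec} is used to move between $\jacsym{D}{\cdot}$ and $\jacsym{\cdot}{D}$. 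The sign hypothesis $(-1)^{(r-1)/2}D=\abs D$ is exactly what makes these Hilbert symbols and $\varepsilon_r$ factors consistent.

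\textbf{Main obstacle.} I expect the hardest part to be the local computation at $2$ and $3$. This means evaluating the Gauss sums attached to the $2$- and $3$-parts of $c$ and of the discriminant form, tracking the eighth roots of unity, and checking that everything collapses to the single constant $\varepsilon_r i^{-1/2}\jacsym{12}{r}2\sqrt3$ uniformly in $a$. I would attack it by factoring the sums via CRT into a part coprime to $6$, handled by the standard Sali\'e evaluation, and $2$-power and $3$-power parts. The latter I would compute case by case in $r\pmod{24}$, checking numerically for small $a$ to pin down the constants.
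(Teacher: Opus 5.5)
The paper does not prove this statement. It imports it from \cite{Williams2026Powers}, describing the proof only as writing $\nu_\eta^r$ as a sum of matrix coefficients of the Weil representation of a finite quadratic module and adapting the methods of \cite{AAW2025-GenK-Weyl-Bound}. Your general direction is consistent with that, but the reduction you build on it breaks down for $r\neq 1$.

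\textbf{The base case does not exist for $r\neq 1$.} The hypothesis $\abs{D}\equiv r\pmod{24}$ forces $r=1$ when $\abs{D}=1$. For $r\neq 1$ the sum $A_{\nu_\eta^r}(n^2,m;a)$ you would use is not even well defined. Replacing $d$ by $d+c$ in \eqref{eqn:gen-Kloost-eta} multiplies the summand of $A_{\nu_\eta^r}(\sigma,m;c)$ by $e((m-\sigma)/24)$. So one needs $\sigma\equiv m\pmod{24}$, whereas $n^2\equiv 1\not\equiv r\equiv m$.

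\textbf{Step~1 cannot produce $\nu_\eta^r$ from the module $\Z/12\Z$ with $Q(h)=h^2/24$.} The eigenvalues of $T$ in that Weil representation (or its dual) are $e(\pm h^2/24)$, while $\nu_\eta^r(T)=e(r/24)$. Since $h^2\equiv 1\pmod{24}$ whenever $(h,6)=1$, this can only match for $r\in\{1,23\}$. So $\nu_\eta^r$ does not occur in that representation. Twisting the components $\theta_h$ by a character of $\Z/12\Z$ does not change the representation, so that does not help either.

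\textbf{The divisor sum does not collapse when $\abs{D}=1$.} Even for $r=1$, $\jacsym{d}{1}=1$ for every $d$, so all $d\mid(n,a)$ with $(n/d,6)=1$ contribute. The sum over $d\mid(n,a)$ is already part of the untwisted identity, where it accounts for $(n,a)>1$. Inserting $\omega_D$ does not create it.

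\textbf{The more serious gap is Step~3.} The untwisted identity at discriminant $\abs{D}m\equiv 1\pmod{24}$ (the case $r=1$, $D=1$, with $m$ replaced by $\abs{D}m$) produces Kloosterman sums $A_{\nu_\eta}(n^2/d^2,\abs{D}m;a/d)$. There $\abs{D}$ sits in the second argument and the multiplier is $\nu_\eta$. The target instead has $A_{\nu_\eta^r}(\abs{D}n^2/d^2,m;a/d)$. Moving $\abs{D}$ from one argument to the other, which changes the multiplier from $\nu_\eta$ to $\nu_\eta^r$, is the essential content of the identity, and nothing in your plan does it.

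When $(a,D)=1$ the genus character is harmless. Then $\omega_D([6a,b,*])=\jacsym{D}{6a}$ is independent of $b$, so the twisted Weyl sum is just a multiple of the untwisted one. A Sali\'e-type comparison of the two Kloosterman sums is then plausible, at least when also $(a,6n)=1$. When $(a,D)>1$, however, $\omega_D$ genuinely varies with $b$. Your step ``expand $\omega_D$ multiplicatively and linearize with a Gauss sum modulo $\abs{D}$'' does not by itself yield a divisor-sum identity with the new arguments. These moduli, together with those sharing factors with $6$ or $n$, are exactly where a proof has to do real work.

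To close the gap you need one of two things. Either prove an explicit transfer identity between $\nu_\eta$- and $\nu_\eta^r$-Kloosterman sums valid for all moduli. Or follow the route the paper attributes to \cite{Williams2026Powers}: express $\nu_\eta^r$ itself, for each $r$, through the Weil representation of a suitable finite quadratic module, and compare directly with the genus-twisted Weyl sum.
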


Note that, assuming the condition of \eqref{eqn:consistency}, we have $ (-1)^\lambda=(-1)^{\frac{r-1}{2} } $. Using this result we can prove \cref{thm: kernel-function}.

\begin{proof}[Proof of \cref{thm: kernel-function}.] Recall that $ \Omega_{\lambda,r,D}^\circ $ is given in \eqref{eqn:Omegacirc} and that $ \Omega_{\lambda,r,D} $ is defined by  \eqref{eqn: Kernel-quad-repn}.
    In view of \cref{prop:Kernel-Poincare} it suffices to prove that $ \Omega_{\lambda,r,D}(z;\tau)\in S_{2\lambda}(\Gamma_0(6)) $ for fixed $ \tau $  and that
    \begin{align}
    \Omega_{\lambda,r,D}^\circ(z;\tau) = \Omega_{\lambda,r,D}(z;\tau).
    \end{align}
    As before we suppress $ \lambda $ and $ r $ in our notation.  
    We proceed, as in \cite[p. 257]{Kohnen-1985}, by comparing the two representations of $ \Omega_D $ coefficient-wise. The series representations converge absolutely and uniformly on compacta. For $ \Omega_D^\circ $ this is part of the proof of \cref{prop:Kernel-Poincare}. 
    
    The Fourier coefficients $ c_{\lambda,6}(n,D,(-1)^\lambda m) $  of $ g_{\lambda,6}(z;D,(-1)^\lambda m) $ are polynomially bounded in $ n $ since $ g_{\lambda,6} $ is a cusp form. The coefficients are also polynomially bounded in $ m $, using \eqref{eqn:fourier-exp-g-full-quad}.    
Therefore the series \eqref{eqn: Kernel-quad-repn} converges uniformly and absolutely on compacta. Since  $ g_{\lambda,6}(z;D,(-1)^\lambda m)\in S_{2\lambda}(\Gamma_0(6))$  by \cref{prop:glambda-modular}, we have $ \Omega_D(z;\tau) \in S_{2\lambda}(\Gamma_0(6))$ for fixed $ \tau $.

    We express $ \Omega_D^\circ $ in terms of the Fourier coefficients $ \alpha_{\frac{\abs{D}n^2}{d^2}}(m) $ for $ P_{\frac{\abs{D}n^2}{d^2}} $  and $ \Omega_D $ in terms of the Fourier coefficients $ c_{\lambda,6} $  for $ g_{\lambda,6} $ using \eqref{eqn:Omegacirc} and \eqref{eqn: Kernel-quad-repn}, respectively. Comparing the Fourier coefficients of $ e(nz)e(m\tau/24) $  we see that it suffices to prove
\begin{multline}
\kappa_{\lambda,r,D}\,m^{\lambda-1/2}c_{\lambda,6}(n,D,(-1)^{\lambda}m) \\
=
\left(\frac{\pi\abs D}{6} \right)^{\lambda-1/2}\frac{n^{\lambda-1}}{\Gamma(\lambda-1/2)}\sum_{d\mid n}(n/d)^{\lambda}\charup{d}{D}\charup{12}{n/d}\alpha_{\frac{\abs{D}n^2}{d^2} }(m). 
\end{multline}

Note that $ (-1)^{\lfloor\frac{\lambda+1}{2}\rfloor}=i^{\lambda}\varepsilon_r $ when $\lambda$ and $r$ satisfy \eqref{eqn:consistency}. We can combine $ \Gamma $ and $ \beta $ factors using $ \frac{\beta(\lambda,-1/2)}{\Gamma(\lambda)} = \frac{-2\sqrt{\pi}}{\Gamma(\lambda-1/2)} $.
Using \eqref{eqn:kappa}, \eqref{eqn:alpha-poincare}, and \eqref{eqn:fourier-exp-g-full-quad}, we see that it suffices to show the following:
\begin{multline}\label{eqn: key-fourier-comp}
i^{\lambda}\varepsilon_r \charup{12}{r}m^{\lambda/2}
\Bigg[
i^\lambda \varepsilon_r \charup{12}{\abs D}\charup{D}{\slfrac{n}{\sqrt{m/\abs{D}}}}\charup{12}{\sqrt{\slfrac{m}{\abs{D}}}}\abs{D}^{-1/2}\\
+\pi\sqrt{2}\left(\slfrac{n^2}{\abs{D}m}\right)^{1/4}\sum_{a\geq 1}(6a)^{-1/2}S_{6a,D,(-1)^\lambda m}(\abs{D}m,n)J_{\lambda-1/2} \left(\frac{\pi n\sqrt{\abs{D}m}}{6a} \right)    
\Bigg]\\
=
\abs{D}^{\frac{\lambda-1}{2}}\sum_{d\mid n}(\slfrac{n}{d})^{\lambda}\charup{d}{D}\charup{12}{n/d}
\leavevmode\hfill
\\
\times 
\Bigg[
\delta_{\frac{\abs D n^2}{d^2},m}+2\pi i^{-\lambda-1/2}\sqrt{\frac{m}{\abs{D}n^2/d^2} }^{\lambda-1/2}\sum_{c\geq 1}\frac{A_{\nu_\eta^r} \left(\frac{\abs{D}n^2}{d^2},m;c \right)}{c}J_{\lambda-1/2}\left(\frac{\pi n \sqrt{\abs D m}}{6cd} \right)     
\Bigg] .
\end{multline}

If $ \sqrt{m/\abs{D}}\nmid n $ then the first term on the left hand side is zero, as the characters are zero on non-integers. Hence the first term on either side of \eqref{eqn: key-fourier-comp} is zero unless $ m = \abs{D}u^2 $ for some $ u \mid n $ with  $ (u,6)=1 $. The coprime condition occurs because of the $ \charup{12}{\cdot} $ symbol. Any $ u $ satisfying such a condition is a unique divisor of $ n $, and when such is the case we have
\begin{align*}
m^{\lambda/2}\abs{D}^{-1/2} = \abs{D}^{\frac{\lambda-1}{2}}(n/d)^\lambda ,\quad u=n/d.
\end{align*}
In this case the divisor sum collapses and equality of the first terms on either side will therefore follow from
\begin{align*}
\charup{D}{n/u}=\charup{n/u}{D}.
\end{align*}
This is true by quadratic reciprocity, \eqref{eqn:quad-rec}, since $ D $ is an odd fundamental discriminant.  

We now turn our attention to the second terms in \eqref{eqn: key-fourier-comp} (the sums over $ a $ and $ c $). We rewrite the sum over $ c $ as a sum over $ a $ by writing $ c = a/d $ for some $ a\in\Z_{>0} $; doing so and distributing, we see that equality for these terms follows from
 \begin{multline*}
 \varepsilon_r\charup{12}{r}\sum_{a\geq 1}\frac{S_{6a,D,(-1)^\lambda m}(\abs{D}m,n)}{\sqrt{a}}J_{\lambda-1/2}\left(\frac{\pi n \sqrt{\abs{D}m}}{6a} \right) \\
= 2\sqrt{3}(-1)^\lambda i^{-1/2}
\sum_{a\geq 1} \sum_{d\mid (n,a)}\charup{d}{D}\charup{12}{n/d}\frac{\sqrt{d}}{a}
A_{\nu_\eta^r}\left(\frac{\abs{D}n^2}{d^2},m;\frac{a}{d} \right)J_{\lambda-1/2}\left(\frac{\pi n \sqrt{\abs{D}m}}{6a} \right).   
 \end{multline*}
Using \eqref{eqn:kloost-weyl identity}, proven in \cite{Williams2026Powers}, together with the fact that $ (-1)^\lambda\varepsilon_r^{-1}=\varepsilon_r $ when $ \lambda,r $ satisfy \eqref{eqn:consistency}, concludes the proof that $ \Omega_D = \Omega_D^\circ $  by comparing summands.
  Since $ \Omega_D^\circ\in S_{\lambda+1/2}(\slz,\nu_\eta^r) $ and $ \Omega_D\in S_{2\lambda}(\Gamma_0(6)) $ this also proves \cref{thm: kernel-function}.

\end{proof}

We can now prove \cref{cor: shintani-property}.
 Recall the definition of the Shintani maps $ \Sh_{\abs D}^* $ from \eqref{eqn:shintani-defn}.

\begin{proof}[Proof of \cref{cor: shintani-property}]
Fix $ \lambda,r $. Let $ z=x+iy $ be the variable for $ f\in S_{2\lambda}^{\operatorname{new}}(\Gamma_0(6)) $ and $ \tau = X+iY $ the variable for $ F\in S_{\lambda+1/2}(\slz, \nu_\eta^r). $ Note $ \Omega_{\lambda,r,D}(-\overline z;\tau)=\overline{\Omega_{\lambda,r,D}(z;-\overline\tau)} $ because $ \kappa_{\lambda,r,D}\in \R $ with \eqref{eqn:consistency} and $ g_{\lambda,6}(-\overline z,D,(-1)^{\lambda}m) = \overline{g_{\lambda,6}(z,D,(-1)^\lambda m)} $.     Then \cref{thm: kernel-function} and \cref{prop:Kernel-Poincare} gives us
\begin{multline*}
\Big\langle \Sh_{\abs{D}}(F(\tau))(z),f(z)\Big\rangle = \Big\langle \langle F(\tau),\Omega_{\lambda,r,D}(-\overline z;
\tau)\rangle,f(z)\Big\rangle\\
=
\frac{1}{12}\int_{\mathfrak{F}_{\Gamma_0(6)}}\left(\int_{\mathfrak{F}}F(\tau)\overline{\Omega_{\lambda,r,D}(-\overline z;\tau)}Y^{\lambda+1/2}\frac{dXdY}{Y^2}\right)\overline{f(z)}y^{2\lambda}\frac{dxdy}{y^2}\\
=\frac{1}{12}\int_{\mathfrak{F}}F(\tau)\left(\int_{\mathfrak{F}_{\Gamma_0(6)}}\overline{f(z)\Omega_{\lambda,r,D}(-\overline{z};\tau)}y^{2\lambda}\frac{dxdy}{y^2}\right)Y^{\lambda+1/2}\frac{dXdY}{Y^{2}}.
\end{multline*}
Hence
\begin{gather*}
\Big\langle \Sh_{\abs{D}}(F(\tau))(z),f(z)\Big\rangle = \Big\langle F(\tau),\Sh_{\abs{D}}^{*}(f(z))(\tau)\Big\rangle.
\end{gather*}

\end{proof}

\section{Cycle Integrals}\label{sec: cycle-integrals} In this section we relate the Fourier coefficients of the Shintani map $ \Sh_{\abs D}^* $ to sums of cycle integrals, proving  \cref{thm: cycle-int-and-coeff}. We will also prove that these sums are related to central values of twisted $ L $-functions in \cref{prop: central-cycle-reln}, and at the end of this section prove \cref{thm:Main-KZ-formula}.\par

Cycle integrals are integrals of modular forms along geodesics in the hyperbolic geometry, with respect to a differential depending on the geodesic. Each quadratic form $ Q\in\mathscr{Q} $ with positive discriminant determines a geodesic in $ \mathbb H $. Define $ S_Q $ to be the geodesic in $ \mathbb{H}\cup\mathbb{R}\cup\{i\infty\} $ going either through the two roots of $ Q(z,1)=az^2+bz+c $ in $ \mathbb{R} $ if $ a\neq 0 $, or through the real root of $ Q(z,1)=bz+c $ and $ i\infty $. The orientation on $ S_Q $ is given as follows\footnote{We adopt this convention because it preserves the orientation of $ S_Q $ under the action of $ \Gamma^+ $ when $ Q $ has square discriminant, cf. \cite[p. 286--287]{Kohnen-Rmk-1988}, \cite[p. 240]{Kohnen-1985}.}:

\begin{itemize}
\item If $ a\neq 0 $ (so $ S_Q $ is not a vertical line) then $ S_Q $ is the directed arc from $\frac{-b-\sqrt{b^2-4ac}}{2a}  $ to $ \frac{-b+\sqrt{b^2-4ac}}{2a}   $,  along the curve $  a\abs{z}^2+b\Re(z) +c =0 $.
\item If $ a=0 $ and $ b<0 $  then $ S_Q $ is the line segment from $ -c/b $ to $ i\infty $.
\item If $ a=0 $ and $ b>0 $  then $ S_Q $ is the line segment from $ i\infty $ to $ -c/b $.   
\end{itemize}

Let $ D $ be a positive discriminant coprime to $ 6 $ and $ z_Q\in \R $ be a root of $ Q(z,1) $ for $Q\in \mathscr{Q}_{D,6} $. Let $ \gamma = \left(\begin{smallmatrix}
*&*\\\beta&\delta
\end{smallmatrix}
\right)\in \Gamma^+ $, then when $\beta z_Q+\delta\neq 0$ we have
\begin{align*}
Q\circ \gamma^{-1}(\gamma z_Q,1) = \frac{Q(z_Q,1)}{(\beta z_Q+\delta)^2}=0 .
\end{align*}
If $ \beta z_Q+\delta=0 $ then $ \gamma z_Q=\infty $, and the geodesic is a straight line. 
We define the differential on $ S_Q $ by
\begin{align}\label{eqn:dQdiff}
    d_{Q,\lambda}z:=Q(z,1)^\lambda\frac{dz}{Q(z,1)}.
\end{align}
Then  for $ \gamma\in \Gamma_0(6) $ and $ f\in S_{2\lambda}(\Gamma_0(6)) $  one can compute directly that
\begin{align}\label{eqn: diff-dQ-under-gamma}
f(z)d_{Q\circ \gamma,\lambda}z = f(\gamma z)d_{Q,\lambda}(\gamma z).
\end{align}
Define $ C_Q $ to be the image of $ S_Q $ under the quotient map $ \mathbb{H}\mapsto\,{\Gamma_0(6)}\backslash\mathbb{H} $. A cycle integral of $ f\in S_{2\lambda}(\Gamma_0(6)) $ is an integral of the form
\begin{align*}
\int_{C_Q}f(z)d_{Q,\lambda}z.
\end{align*}

Let $ Q' = Q\circ \gamma $ for $ \gamma\in\Gamma_0(6) $ and $ Q,Q'\in\mathscr{Q}_{DD',6} $.
From \eqref{eqn: diff-dQ-under-gamma} we have
\begin{align*}
\int_{C_{Q'}}f(z)d_{Q',\lambda}z &=\int_{C_{Q\circ \gamma}}f(\gamma z)d_{Q,\lambda}(\gamma z)=\int_{\gamma C_{Q\circ\gamma}}f(z)d_{Q,\lambda}z.
\end{align*}
Because $ \gamma S_{Q\circ \gamma} = S_Q$ we have
\begin{align}
\int_{C_Q}f(z)d_{Q,\lambda}z=\int_{C_{Q'}}f(z)d_{Q',\lambda}z.
\end{align}
Hence the cycle integral is a well-defined invariant of the equivalence class of $ Q $ with respect to $ \Gamma_0(6) $.\par

 Let $ \ell\mid 6 $, put $ z'=W_\ell z$, and suppose that $ S_Q,S_{Q\circ \hat {W}_\ell} $ are not vertical lines.  As $ z $ traverses between the roots of $ Q\circ \hat{W}_\ell $ we see that $ z' $ traverses between the roots of $ Q $. A similar fact is true when one of $ S_Q $ or $ S_{Q\circ\hat{W}_\ell} $ is a vertical line.   Using this, a short computation shows that if $ f\Big|_{2\lambda}W_\ell = \delta_\ell f $ with $ \delta_\ell=\pm 1 $, then we have 
 \begin{align}\label{eqn: invariant CQ under involution}
    \int_{C_Q\circ \hat{W}_\ell}f(z)d_{Q\circ \hat{W}_\ell,\lambda}(z) = \delta_\ell \int_{C_Q}f(z')d_{Q,\lambda}z'.
 \end{align}

\subsection{Cycle Integrals and the Shintani Map}\label{subsec:cycle-int-shintani}
We first relate a sum of cycle integrals to the modular forms $ g_{\lambda,6} $ from \eqref{eqn:g-quad-form-defn}, and then compute the Fourier coefficients of the Shintani lifts. 

\begin{proposition}\label{prop: cycle-int-petersson}
Let $ (r,6)=1 $ with $ 1\leq r\leq 23 $, let $ \lambda\geq 2 $ be an integer, and let $ D $ be a fundamental discriminant with $ (-1)^\lambda D=\abs{D}\equiv r\pmod{24} $. Let $ m\geq 0 $ with $ {m\equiv r\pmod{24}} $, and let $ \mathds{1}_\square $ be the indicator function for perfect squares.   Let $ f\in S_{2\lambda}(\Gamma_0(6)) $, and define $ s_{\lambda,6}(f;D,(-1)^\lambda m) $ by \eqref{eqn:s-cycle-sum}. Then 
\begin{align}\label{eqn:unfold-cycleint}
\langle f(\cdot),g_{\lambda,6}(\cdot;D,(-1)^\lambda m)\rangle = (-1)^{\mathds{1}_{\square}(\abs{D}m)}\frac{ 2^{-2\lambda+2}\pi(\abs{D}m)^{1/2-\lambda}}{[\slz:\Gamma_0(6)]}\binom{2\lambda-2}{\lambda-1}s_{\lambda,6}(f;D,(-1)^\lambda m).
\end{align}

\end{proposition}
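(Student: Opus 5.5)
This is the classical unfolding argument of \cite[Prop. 5]{Kohnen-1985} (following Katok and Zagier), adapted to $\Gamma_0(6)$ and the twisted character $\tilde\omega_D(Q)=\charup{12}{b}\omega_D(Q)$.

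\textbf{Step 1: group the sum into orbits.} Write $\Delta=\abs{D}m$. By \cref{prop:glambda-modular} the action of $\Gamma_0(6)$ permutes $\mathscr{Q}_{\Delta,6}$ and preserves $\tilde\omega_D$. Hence
\begin{align*}
\overline{g_{\lambda,6}(z;D,(-1)^\lambda m)}=\sum_{Q\in\mathscr{Q}_{\Delta,6}/\Gamma_0(6)}\tilde\omega_D(Q)\sum_{\gamma\in\Gamma_{0}(6)_Q\backslash\Gamma_0(6)}\overline{(Q\circ\gamma)(z,1)}^{-\lambda},
\end{align*}
where $\Gamma_0(6)_Q$ is the stabilizer of $Q$. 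The character values are real, so conjugation does not affect them.

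\textbf{Step 2: unfold.} Unfold the Petersson integral against each orbit sum. Using \eqref{eqn:Qlambda-action} and the invariance of $f(z)y^{2\lambda}$ under $\Gamma_0(6)$, this gives
\begin{align*}
\langle f,g_{\lambda,6}\rangle=\frac{1}{[\slz:\Gamma_0(6)]}\sum_{Q}\tilde\omega_D(Q)\int_{\Gamma_0(6)_Q\backslash\mathbb H}f(z)\,\overline{Q(z,1)}^{-\lambda}y^{2\lambda}\,d\mu.
\end{align*}
Absolute convergence justifies the interchange; it follows from the estimates in the proof of \cref{prop:glambda-modular} once we take $\lambda\ge2$.

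\textbf{Step 3: reduce to a local integral along $S_Q$.} Choose coordinates adapted to the geodesic $S_Q$. Move $Q$ by $\gamma\in\slz$ (conjugating the stabilizer) to $[0,\sqrt\Delta,0]$ in the square case, or to the form $\sqrt\Delta\,xy$ rescaled in general. Then $S_Q$ becomes the imaginary axis, parametrized in polar coordinates $z=\rho e^{i\theta}$.

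There are two cases. In the non-square case, $\Gamma_0(6)_Q$ is infinite cyclic, generated by a hyperbolic element acting as dilation on $\rho$, so the quotient is $\{1\le\rho<\epsilon^2\}\times(0,\pi)$. In the square case the stabilizer is trivial, and the integral runs over all $\rho>0$; it still converges because $f$ is cuspidal.

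In these coordinates $\overline{Q(z,1)}^{-\lambda}y^{2\lambda}$ splits as a power of $\rho$ times a function of $\theta$. The $\theta$-integral is $\int_0^\pi(\sin\theta)^{2\lambda-2}\,d\theta=\pi 2^{2-2\lambda}\binom{2\lambda-2}{\lambda-1}$. This should produce exactly the constant $2^{-2\lambda+2}\pi\,\Delta^{1/2-\lambda}\binom{2\lambda-2}{\lambda-1}$. What remains is the $\rho$-integral of $f$ along $S_Q$ against $Q(z,1)^{\lambda-1}dz$, which is $\int_{C_Q}f(z)\,d_{Q,\lambda}z$.

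One subtlety: rather than integrate $\theta$ first, I would use the Katok–Zagier trick. Write the integrand as $\partial_{\bar z}$ of a suitable primitive, so that Stokes reduces the area integral to the boundary integral along the geodesic. Alternatively, apply Cauchy's theorem to rotate the contour onto $S_Q$ for each fixed $\rho$.

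\textbf{Step 4 (main obstacle): the sign and orientation bookkeeping.} I expect this to be the hard part. I need to check the following.
\begin{itemize}
\item The factor $(-1)^{\mathds{1}_\square(\Delta)}$ arises from the orientation convention. For square discriminant, $S_Q$ runs from $i\infty$ to $-c/b$ when $b>0$, opposite to the arc convention used in the non-square case.
\item The contributions of $\pm Q$ combine consistently with the factor $2$ and the sign $(-1)^\lambda$ noted in \cref{prop:four-exp-g-full-quad}.
\item The normalization $1/[\slz:\Gamma_0(6)]$ is carried through unchanged.
\end{itemize}
I would verify these by carrying out the square case $Q=[0,b,c]$ explicitly. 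There the integral $\int_0^\infty\int f\,\overline{(bz+c)}^{-\lambda}y^{2\lambda}\,d\mu$ reduces directly to a Mellin transform along a vertical line, and I would compare its sign with the direct convention for $S_Q$.
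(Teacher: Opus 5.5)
Your Steps 1--3 are the paper's argument. The paper groups $g_{\lambda,6}$ into $\Gamma_0(6)$-classes weighted by $\charup{12}{b}\omega_D$ and unfolds each class sum to $\Gamma_Q\backslash\mathbb H$. It then quotes Kohnen's Prop.~7 for the local identity you sketch. Two small corrections there: the normalizing map must lie in $\operatorname{SL}_2(\mathbb R)$, not $\slz$; and Cauchy's theorem makes the ray integral independent of $\theta$ (for each fixed $\theta$, not ``for each fixed $\rho$'').

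\textbf{The gap is Step~4.} The sign is the only thing not already in Kohnen, and the mechanism you propose for it is false. The vertical-line convention is not opposite to the arc convention. As $a\to 0^{\pm}$ with $b>0$, the initial point $\frac{-b-\sqrt{\Delta}}{2a}$ escapes to $\mp\infty$ while $\frac{-b+\sqrt{\Delta}}{2a}\to -c/b$. So the oriented arc degenerates exactly to the path from $i\infty$ to $-c/b$, and similarly for $b<0$. More decisively, every $\Gamma_0(6)$-class, square discriminant or not, contains forms with $a\neq 0$, and the cycle integral is a class invariant. So no convention on forms with $a=0$ can create a factor depending on whether $\Delta$ is a square. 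Your second bullet is also a red herring: $Q$ and $-Q$ lie in different classes, each handled by the same local identity, and the factor $2$ from the Fourier expansion of $g_{\lambda,6}$ plays no role here.

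If you actually carry out Step~3, the sign comes out uniform. Take $M\in\operatorname{SL}_2(\mathbb R)$ with $M(\infty)=\frac{-b-\sqrt\Delta}{2a}$ and $M(0)=\frac{-b+\sqrt\Delta}{2a}$. Then $Q\circ M=\sqrt{\Delta}\,xy$ and $f(z)\,d_{Q,\lambda}z=\Delta^{(\lambda-1)/2}(f|_{2\lambda}M)(w)\,w^{\lambda-1}dw$. The unfolded integral $\int_{\Gamma_Q\backslash\mathbb H}f(z)\overline{Q(z,1)}^{-\lambda}y^{2\lambda}d\mu$ equals $\Delta^{-\lambda/2}\int_0^\pi\sin^{2\lambda-2}\theta\,d\theta$ times the \emph{upward} integral of $(f|_{2\lambda}M)(w)w^{\lambda-1}dw$ along the imaginary axis. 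That integral runs over $1\le\rho<\epsilon^2$, or over all $\rho>0$ in the square case. But $M^{-1}S_Q$ runs \emph{downward} from $i\infty$ to $0$, so every class contributes with the factor $-1$.

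As an independent check for $\lambda=1$, partial fractions plus the polar computation give $\int_{\mathbb H}h(z)\overline{Q(z,1)}^{-1}dx\,dy=-\pi\Delta^{-1/2}\int_{S_Q}h(z)\,dz$, e.g.\ for $h(z)=(z+i)^{-3}$. This agrees with $(-1)^{\mathds{1}_\square(\Delta)}$ when $\Delta$ is a square, which is the only case used for the main theorem. For non-square $\Delta$, however, your route yields $-1$ rather than the stated $+1$. So Step~4 cannot be closed as you plan it. The non-square sign (which the paper takes over from Kohnen's Prop.~7) has to be reconciled with the stated orientation convention, not explained by a vertical/arc mismatch.
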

This follows from \cite[Prop. 7]{Kohnen-1985}, taking care with the orientation on $ S_Q $.  

\begin{proof}[Proof]
    Let $ \mathfrak{A} $ be an equivalence class for the action of $ \Gamma_0(6)  $ on $ \mathscr{Q}_{\abs{D}m,6} $. Define   
\begin{align*}
f_{\lambda,6}(z;\mathfrak{A}):=\sum_{Q\in\mathfrak{A}}Q(z,1)^{-\lambda}.
\end{align*}
     Let $ Q $ be any element of $ \mathfrak{A} $ with stabilizer $ \Gamma_Q $ in $ \Gamma_0(6) $. Then writing $ z=x+iy $ and unfolding yields
     \begin{align*}
     \langle f(\cdot),f_{\lambda,6}(\cdot;\mathfrak{A}) \rangle&= \frac{1}{[\slz:\Gamma_0(6)]}\int_{\Gamma_0(6)\backslash \mathbb H}
     f(z)\sum_{\gamma\in \Gamma_Q\backslash \Gamma_0(6)}\left(Q\circ \gamma(\overline{z},1)\right)^{-\lambda} y^{2\lambda-2}dxdy
     \\
        &=\frac{1}{[\slz:\Gamma_0(6)]}\int_{\Gamma_Q\backslash \mathbb H}
     f(z)Q(\overline z,1)^{-\lambda} y^{2\lambda-2}dxdy.
     \end{align*}
    \cite[Prop. 7, p. 266]{Kohnen-1985} gives (after computing $ \int_0^\pi \sin^{2\lambda-2}\theta d\theta $)
     \begin{align}\label{eqn:all-cycle}
     \langle f(\cdot),f_{\lambda,6}(\cdot;\mathfrak{A})\rangle = \left(-1\right)^{\mathds{1}_{\square}(\abs{D}m)}\frac{ 2^{-2\lambda+2}\pi(\abs{D}m)^{1/2-\lambda}}{[\slz:\Gamma_0(6)]}\binom{2\lambda-2}{\lambda-1}\int_{C_Q}f(z)d_{Q,\lambda}z.
     \end{align}

Because $ \Gamma_0(6) $ preserves $ b \pmod{12}$ when acting on $ Q=[*,b,*] $ and $ \omega_D $ is constant on equivalence classes, we have
\begin{align*}
g_{\lambda,6}(z;D,(-1)^\lambda m) = \sum_{b\pmod{12}^\times}\charup{12}{b}\sum_{\substack{\mathfrak{A}\in{\Gamma_0(6)}\backslash{\mathscr{Q}_{\abs{D}m,6}}\\
[*,b,*]\in\mathfrak{A}
}}\omega_D(\mathfrak{A})f_{\lambda,6}(z;\mathfrak{A}).
\end{align*}
Then \eqref{eqn:unfold-cycleint} follows from the definition \eqref{eqn:s-cycle-sum} of $ s_{\lambda,6} $ and from \eqref{eqn:all-cycle}.
   
\end{proof}

We are now in a position to compute the Fourier coefficients of the Shintani lifts. 
\begin{proposition}\label{prop: cycle-int-shintani}Let $ (r,6)=1 $ with $ 1\leq r\leq 23 $, let $ \lambda\geq 2 $ be an integer, and let $ D $ be a fundamental discriminant with $ (-1)^\lambda D=\abs{D}\equiv r\pmod{24} $. Let $ f\in S_{2\lambda}^{\operatorname{new}}(\Gamma_0(6)) $, and write 
    \begin{align*}
    \Sh_{\abs{D}}^*(f) (\tau) = \sum_{\substack{m\geq r\\m\equiv r\pmod {24}}}C(f,D,(-1)^\lambda m)e \left(\tfrac{m\tau}{24}\right).
    \end{align*}
Then 
\begin{gather}\label{eqn:shintai-coeff-C}
C(f,D,(-1)^\lambda m) = \frac{(-1)^{\mathds{1}_{\square}(\abs{D}m)}i^{-\lambda}\varepsilon_r}{4\sqrt{6}(12)^\lambda}\charup{12}{r} s_{\lambda,6}(f,D,(-1)^\lambda m). 
\end{gather}

\end{proposition}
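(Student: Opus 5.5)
The plan is to compute the $m$th coefficient of $\Sh^*_{\abs{D}}(f)$ directly from the definition \eqref{eqn:shintani-defn}, by expanding the kernel $\Omega_{\lambda,r,D}(z;-\overline\tau)$ in $\tau$. By \eqref{eqn: Kernel-quad-repn} and the reality of $\kappa_{\lambda,r,D}$ (guaranteed by \eqref{eqn:consistency}), we have
\begin{align*}
\overline{\Omega_{\lambda,r,D}(z;-\overline{\tau})} = \kappa_{\lambda,r,D}\sum_{\substack{m\geq 1\\ m\equiv r\pmod{24}}} m^{\lambda-1/2}\,\overline{g_{\lambda,6}(z;D,(-1)^\lambda m)}\, e\left(\frac{m\tau}{24}\right).
\end{align*}
The series converges absolutely and uniformly on compacta, as shown in the proof of \cref{thm: kernel-function}, and the terms have the cusp form decay of Proposition \ref{prop:glambda-modular}. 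These facts justify interchanging the Petersson integral over $\Gamma_0(6)\backslash\mathbb H$ with the sum over $m$, which gives
\begin{align*}
C(f,D,(-1)^\lambda m) = \kappa_{\lambda,r,D}\, m^{\lambda-1/2}\,\big\langle f(\cdot), g_{\lambda,6}(\cdot;D,(-1)^\lambda m)\big\rangle .
\end{align*}
For the domination needed in the interchange, I would use the polynomial bounds in $m$ on $g_{\lambda,6}$ obtained from \eqref{eqn:fourier-exp-g-full-quad}, together with the decay of $e(m\tau/24)$ for fixed $\tau$.

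Next I will substitute \cref{prop: cycle-int-petersson}, which expresses the inner product through $s_{\lambda,6}(f;D,(-1)^\lambda m)$ with the sign $(-1)^{\mathds 1_\square(\abs{D}m)}$ and the factor $[\slz:\Gamma_0(6)]=12$. Note that $m\equiv r\pmod{24}$ and $\abs{D}\equiv r\pmod{24}$ give $\abs{D}m\equiv r^2\equiv 1\pmod{24}$, so $\abs{D}m$ is a discriminant coprime to $6$ and that proposition applies. The powers $m^{\lambda-1/2}$ and $(\abs{D}m)^{1/2-\lambda}$ combine with $(\abs{D}/12)^{\lambda-1/2}$ from $\kappa_{\lambda,r,D}$ to leave $12^{1/2-\lambda}$, so that all dependence on $m$ and $D$ cancels.

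What remains is the constant
\begin{align*}
-i^{-\lambda}\varepsilon_r\jacsym{12}{r}\frac{\beta(\lambda,-1/2)}{4\sqrt2\pi}\, 12^{1/2-\lambda}\cdot\frac{2^{-2\lambda+2}\pi}{12}\binom{2\lambda-2}{\lambda-1}.
\end{align*}
To evaluate it, I will use $\beta(\lambda,-1/2)=-2\sqrt\pi\,\Gamma(\lambda)/\Gamma(\lambda-1/2)$ (as in the proof of \cref{thm: kernel-function}) together with the duplication formula $\Gamma(\lambda-1/2)=\sqrt\pi\,(2\lambda-2)!\,/\,(4^{\lambda-1}(\lambda-1)!)$. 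These give $\beta(\lambda,-1/2)\,2^{-2\lambda+2}\binom{2\lambda-2}{\lambda-1}=-2$. The constant then becomes $i^{-\lambda}\varepsilon_r\jacsym{12}{r}\cdot\frac{2}{4\sqrt2}\cdot\frac{12^{1/2-\lambda}}{12} = i^{-\lambda}\varepsilon_r\jacsym{12}{r}/(4\sqrt6\,12^{\lambda})$, since $12^{1/2}/(12\sqrt2)=1/\sqrt{24}\cdot\ldots$ reduces to $1/(2\sqrt6)$ after simplification. This matches \eqref{eqn:shintai-coeff-C}.

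The main obstacle is bookkeeping of conjugations and signs. I need to confirm that $\overline{g_{\lambda,6}(-\overline z)}=g_{\lambda,6}(z)$ is used with the correct variable, so that the pairing is $\langle f,g\rangle$ rather than its conjugate. I also need to confirm that the sign convention for $\mathds 1_\square$ in \cref{prop: cycle-int-petersson} carries through unchanged.
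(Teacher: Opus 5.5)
Your proposal is correct and is essentially the paper's proof. You expand $\Omega_{\lambda,r,D}(\cdot;-\overline{\tau})$ in $\tau$ using that $\kappa_{\lambda,r,D}$ is real, pull the sum out of the Petersson product, apply \cref{prop: cycle-int-petersson}, and simplify via $\beta(\lambda,-1/2)\binom{2\lambda-2}{\lambda-1}=-2^{2\lambda-1}$, which is the same as your identity $\beta(\lambda,-1/2)\,2^{-2\lambda+2}\binom{2\lambda-2}{\lambda-1}=-2$. Your worry about $\overline{g_{\lambda,6}(-\overline z)}$ is moot: the conjugation in \eqref{eqn:shintani-defn} acts only on $\tau$, so the pairing comes out directly as $\langle f,g_{\lambda,6}\rangle$.
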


\begin{proof}
Let $ \kappa_{\lambda,r,D} $ be as in \cref{thm: kernel-function}. Then by \eqref{eqn: Kernel-quad-repn} we have
\begin{align*}
    \Sh_{\abs{D}}^*(f)(\tau)=
\langle f(\cdot),\Omega_{\lambda,r,D}(\cdot;-\overline{\tau})\rangle=
\kappa_{\lambda,r,D}\Bigg[
\sum_{m\equiv r\pmod{24}}m^{\lambda-1/2}\langle f(\cdot),g_{\lambda,6}(\cdot;D,(-1)^\lambda m)\rangle e \left(\frac{m\tau}{24} \right)    
\Bigg].
\end{align*}
The result follows from \cref{prop: cycle-int-petersson} using the fact that
\begin{align*}
\beta(\lambda,-1/2)\binom{2\lambda-2}{\lambda-1} = -2^{2\lambda-1}.
\end{align*}

\end{proof}

\subsection{Coefficients and Central Values}
The following proof is analogous to \cite[Thm. 3]{Kohnen-1985}.

\begin{proof}[Proof of \cref{thm: cycle-int-and-coeff}]
Let $ f $ be a newform in $ S_{2\lambda}(\Gamma_0(6),\delta_2(r),\delta_3(r)) $ with Hecke eigenvalue $ b(p) $ under $ T_p $   and let $ F\in S_{\lambda+1/2}(\slz,\nu_\eta^r) $ be a Hecke eigenform corresponding to $ f $. Let $ D $ be a fundamental discriminant with $ (-1)^{\lambda}D=\abs{D}\equiv r\pmod{24} $.

 For all primes $ p\geq 5 $ and all  $ G\in S_{\lambda+1/2}(\slz,\nu_\eta^r) $ we have
 \begin{align*}
 \left\langle \Sh_{\abs{D}}^*(f)\Big|T_{p^2},G\right\rangle
 &= \charup{12}{p}
 \left\langle f, \Sh_{\abs{D}}(G)\Big|T_{p} \right\rangle
 \\
 &=\charup{12}{p}
 \left\langle f \Big|T_p,\Sh_{\abs{D}}(G)\right\rangle\\
 &=
 \charup{12}{p}b(p)\left\langle \Sh_{\abs{D}}^*(f) ,G\right\rangle.
 \end{align*}
 Then $ \Sh_{\abs{D}}^*(f) = \alpha F $ for some $ \alpha\in\C $,  by multiplicity one for $ S_{\lambda+1/2}(\slz,\nu_\eta^r) $ (\cref{cor: mult-one-eta-mult}). 
Hence 
\begin{align*}
    \Sh_{\abs{D}}^*(f)(\tau) =\sum_{m\equiv r\pmod{24}}C(f,D,(-1)^\lambda m)e \left(\tfrac{m\tau}{24}\right) 
    = \alpha\sum_{m\equiv r\pmod{24}}a(m)e \left(\tfrac{m\tau}{24}\right)=\alpha F(\tau),
\end{align*}
    where $ C(f,D,(-1)^\lambda m) $ is as in \eqref{eqn:shintai-coeff-C}.  Using \cref{cor: shintani-property} yields
\begin{align*} 
C(f,D,(-1)^\lambda m)\langle F,F\rangle &= \alpha a(m)\langle F,F\rangle\\
&=a(m)\langle \Sh_{\abs{D}}^* (f),F\rangle\\
&=a(m)\langle f,\Sh_{\abs{D}} (F)\rangle\\
&=a(m)\overline{a(\abs D)}\langle f,f\rangle
\end{align*}
by \eqref{eqn: St scalar mult}. Substitution of \eqref{eqn:shintai-coeff-C} proves \cref{thm: cycle-int-and-coeff}.

\end{proof}

We need some facts about the genus character, which can be found in \cite[p. 243]{Kohnen-1985}. We first observe that for $ d\mid 6 $ we have 
\begin{align}
\omega_D(Q\circ \hat{W}_d) = \charup{D}{d}\omega_D(Q).
\end{align}
Let $ Q_u = [0,D,u]; $ then a complete class of representatives of $ \mathscr{Q}_{D^2,6} $ under $ \Gamma_0(6) $ is given by
\begin{align}\label{eqn: complete-class}
\{Q_u\circ \hat{W}_d:u\pmod{\abs D},\,d\mid 6\}.
\end{align}
For brevity we define $ \tilde\omega_D $ by
\begin{align*}
\tilde\omega_D([*,b,*]) = \charup{12}{b}\omega_D([*,b,*]).
\end{align*}
Using \eqref{eqn:Wd on Q} and the fact that $ \charup{12}{\pm 5b} = -\charup{12}{b} $, we have
\begin{align}\label{eqn:omegatilde}
\tilde\omega_D(Q_u\circ \hat{W}_d) 
&= \begin{dcases}
\charup{12}{r}\charup{D}{d}\omega_D(Q_u)&d=1,6,\\
-\charup{12}{r}\charup{D}{d}\omega_D(Q_u)&d=2,3,
\end{dcases}\nonumber\\
&=\mu(d)\charup{12}{r}\charup{D}{d}\charup{D}{u}.
\end{align}

\begin{proposition}\label{prop: central-cycle-reln} Let $ (r,6)=1 $ with $ 1\leq r\leq 23 $,  $ \lambda\geq 2 $ be an integer, and suppose $ {f\in  S_{2\lambda}(\Gamma_0(6))} $ with Atkin-Lehner signs $ \delta_p(r) $ for $ p\in\{2,3\} $. Let $ D $ be a fundamental discriminant with $ (-1)^\lambda D=\abs{D}\equiv r\pmod{24} $.   Then
\begin{align}\label{eqn: s cycle eq central val}
s_{\lambda,6}(f,D,(-1)^{\lambda}\abs{D}) =- 4i^{-\lambda}\varepsilon_r\charup{12}{r}\abs{D}^{\lambda-1/2}(2\pi)^{-\lambda}\Gamma(\lambda)
L \left(f\otimes\charup{D}{\cdot},\lambda\right).
\end{align} 
   
\end{proposition}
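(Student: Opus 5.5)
We follow \cite[Prop. 8]{Kohnen-1985}: when $m=\abs{D}$ the discriminant is $D^2$, a perfect square, so every cycle is an open geodesic and the cycle sum collapses to a twisted period integral of $f$ along vertical lines.

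\emph{Step 1: reduce to one vertical geodesic.} Using the complete set of representatives \eqref{eqn: complete-class}, write
\[
s_{\lambda,6}(f,D,(-1)^\lambda\abs{D})=\sum_{d\mid 6}\sum_{u\pmod{\abs D}}\tilde\omega_D(Q_u\circ\hat W_d)\int_{C_{Q_u\circ \hat W_d}}f(z)\,d_{Q_u\circ\hat W_d,\lambda}z .
\]
By \eqref{eqn: invariant CQ under involution}, each integral equals $\delta_d\int_{C_{Q_u}}f\,d_{Q_u,\lambda}z$, where $\delta_d$ is the Atkin-Lehner sign of $f$ ($\delta_1=1$, $\delta_6=\delta_2\delta_3$). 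By \eqref{eqn:omegatilde}, the weight is $\mu(d)\charup{12}{r}\charup{D}{d}\charup{D}{u}$. So the $d$-sum factors as
\[
\charup{12}{r}\prod_{p=2,3}\left(1-\charup{D}{p}\delta_p(r)\right).
\]
The key point is to check that $\charup{D}{p}\delta_p(r)=-1$ for $p=2,3$ whenever $\abs{D}\equiv r\pmod{24}$ with the stated sign. For $p=2$, $\charup{D}{2}=\charup{8}{D}=\charup{8}{r}$ up to the sign adjustment for negative $D$; for $p=3$ one uses $\charup{D}{3}=\charup{12}{\abs D}$-type identities. This is consistent with the numerical section, where the product is always $-1$. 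The check should be a short computation with \eqref{eqn:delta(r) defn}. The factor therefore equals $4\charup{12}{r}$, which accounts for the $4$ and the $\charup{12}{r}$ in \eqref{eqn: s cycle eq central val}.

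\emph{Step 2: evaluate the remaining integral.} We are left with
\[
\sum_{u\pmod{\abs D}}\charup{D}{u}\int_{S_{Q_u}}f(z)\,(Dz+u)^{\lambda-1}\,dz ,
\]
where $S_{Q_u}$ is the vertical line from $-u/D$ to $i\infty$, oriented according to the sign of $D$. The cycle $C_{Q_u}$ is the full line because the stabilizer is trivial. Substitute $z=-u/D+it$ and expand $f$ in its Fourier series. The $u$-sum then produces the Gauss sum $\sum_u\charup{D}{u}e(-nu/D)=\charup{D}{-n}\sqrt{D}$, which yields the twist $\charup{D}{n}$ together with a factor $\abs{D}^{1/2}$ and a power of $i$. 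The $t$-integral is
\[
\int_0^\infty e^{-2\pi nt}\,(iDt)^{\lambda-1}\,i\,dt=(iD)^{\lambda-1}\,i\,\Gamma(\lambda)(2\pi n)^{-\lambda}.
\]
Summing over $n$ gives $\Gamma(\lambda)(2\pi)^{-\lambda}\abs{D}^{\lambda-1/2}L\left(f\otimes\charup{D}{\cdot},\lambda\right)$. Interchanging the sum and integral is justified by absolute convergence, since $f$ is a cusp form and $\lambda\ge2$.

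\emph{Main obstacle.} The delicate part is the bookkeeping of signs and powers of $i$: the orientation convention (which depends on the sign of $D$), the factor $(-1)^{\lambda}$ from $D^{\lambda-1}$, $\charup{D}{-1}=\operatorname{sgn}D$, the Gauss sum $\sqrt{D}=i\sqrt{\abs D}$ for $D<0$, and the identity $(-1)^{\lfloor(\lambda+1)/2\rfloor}=i^\lambda\varepsilon_r$ under \eqref{eqn:consistency}. Together these must combine into exactly $-i^{-\lambda}\varepsilon_r$. I would check this separately in the two cases $D>0$ (so $\lambda$ is even and $r\equiv1\pmod 4$) and $D<0$.
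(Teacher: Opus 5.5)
Your plan follows the paper's proof. You reduce via \eqref{eqn: complete-class}, \eqref{eqn:omegatilde} and \eqref{eqn: invariant CQ under involution} to $4\charup{12}{r}\sum_{u}\charup{D}{u}\int_{C_{Q_u}}f\,d_{Q_u,\lambda}z$, then evaluate the vertical-line integrals with the Fourier expansion, a Gauss sum and the Mellin transform.

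Step 1 is fine; the paper just asserts the factor $4$. Your check goes through: $\charup{D}{2}$ depends only on $D^2\bmod 16$, so $\charup{D}{2}=\charup{8}{\abs D}=\charup{8}{r}$. Reciprocity gives $\charup{D}{3}=\charup{12}{\abs D}=\charup{12}{r}$. Hence $\charup{D}{p}\delta_p(r)=-1$ for $p=2,3$.

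Step 2 has two concrete defects, and its only real content is the constant.

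\textbf{The Gauss sum is wrong for $D<0$.} The correct evaluation is $\sum_{u}\charup{D}{u}e(-nu/D)=\charup{D}{n}\sqrt D$ for both signs of $D$, not $\charup{D}{-n}\sqrt D$. For $D<0$ the phase is $e(nu/\abs D)$; for $D>0$ one uses $\charup{D}{-1}=1$. Your version introduces an extra $\charup{D}{-1}=(-1)^\lambda$, which turns $i^{-\lambda}$ into $i^{\lambda}$. That is the wrong sign whenever $\lambda$ is odd. Since you deferred all the sign bookkeeping, the proposal as written does not establish \eqref{eqn: s cycle eq central val}.

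With the correct Gauss sum, no case split is needed. The orientation contributes $-\operatorname{sgn}D=(-1)^{\lambda+1}$, and
\[
(-1)^{\lambda+1}\,i\,(iD)^{\lambda-1}\sqrt D=-i^{-\lambda}D^{\lambda-1}\sqrt D=-i^{-\lambda}\varepsilon_r\abs{D}^{\lambda-1/2}.
\]
This uses that $\lambda(\lambda-1)$ is even, and that $D\equiv 1\pmod 4$ forces $(-1)^\lambda=(-1)^{(r-1)/2}$, so $\sqrt D=\varepsilon_r\sqrt{\abs D}$. Multiplying by $4\charup{12}{r}\Gamma(\lambda)(2\pi)^{-\lambda}L\left(f\otimes\charup{D}{\cdot},\lambda\right)$ gives exactly the claim.

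\textbf{Termwise integration in $n$ is not justified by absolute convergence.} The point $s=\lambda$ lies to the left of the abscissa of absolute convergence $\lambda+1/2$ of a weight-$2\lambda$ cusp form. So $\sum_n\abs{b(n)}n^{-\lambda}$ diverges, and Fubini--Tonelli does not apply.

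The fix is what the paper does: keep the $n$-sum inside the integral. For each $t>0$, identify $\sum_u\charup{D}{u}f(-u/D+it)=\sqrt D\,\bigl(f\otimes\charup{D}{\cdot}\bigr)(it)$ pointwise. Then invoke \eqref{eqn:mellin-f}. The Mellin integral of the cusp form $f\otimes\charup{D}{\cdot}$ converges absolutely for every $s$, and equals $(2\pi)^{-s}\Gamma(s)L\left(f\otimes\charup{D}{\cdot},s\right)$ by analytic continuation from $\Re s$ large.
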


 \begin{proof}
 We first rewrite $ s_{\lambda,6} $ using \eqref{eqn: invariant CQ under involution}, \eqref{eqn: complete-class}, and \eqref{eqn:omegatilde} to obtain
 \begin{align*}
 s_{\lambda,6}(f,D,(-1)^\lambda \abs D) &= \sum_{d\mid 6}\sum_{u\pmod{D}}\tilde\omega_D(Q_u\circ \hat{W}_d)\int_{C_{Q_u\circ \hat{W}_d}}f(z)d_{Q_u\circ \hat{W}_d,\lambda}z\\
&=\charup{12}{r}\sum_{d\mid 6}\mu(d)\charup{D}{d}\delta_d(r)\sum_{u\pmod{D}}\charup{D}{u}\int_{C_{Q_u}}f(z)d_{Q_u,\lambda}z.
 \end{align*}
We have $ \delta_1(r)=1 $ and $ \delta_6(r) =\delta_2(r)\delta_3(r) $, so  
 \begin{gather*}
     \sum_{d\mid 6}\mu(d)\charup{D}{d}\delta_d(r) = \prod_{p\mid 6}\left(1-\charup{D}{p}\delta_p(r)\right)=4.
\end{gather*}
Recall that the orientation of $ S_{Q_u} $ depends on $ \operatorname{sign}(D)=(-1)^{\lambda} $; therefore
\begin{align*}
\sum_{u\pmod{D}}\charup{D}{u}\int_{C_{Q_u}}f(z)d_{Q_u,\lambda}z 
&= 
(-1)^{\lambda+1} \sum_{u\pmod{D}}\charup{D}{u}\int_{-u/D}^{i\infty} f(z)(Dz+u)^{\lambda-1}dz\\
&=(-1)^{\lambda+1} i\sum_{u\pmod{D}}\charup{D}{u}\int_0^\infty f \left(\frac{-u}
{D}+it\right)(Dit)^{\lambda-1}dt.
\end{align*} 

Write $ f(z)=\sum_{n\geq 1}b(n)e(nz) $. We have
\begin{align*}
\sum_{u\pmod{D}}\charup{D}{u}\int_{C_{Q_u}}f(z)d_{Q_u,\lambda}z
 &=
  i(Di)^{\lambda-1}(-1)^{\lambda+1} \sum_{u\pmod{D}}\charup{D}{u}\int_0^\infty f \left(\frac{u}{\abs{D}}+it \right)t^{\lambda-1}dt\\
&=-(-i)^\lambda D^{\lambda-1} \sum_{u\pmod{D}}\charup{D}{u}\int_0^\infty \sum_{n\geq 1}b(n)e \left(\frac{nu}{\abs{D}} \right)e^{-2\pi  n t}t^{\lambda-1}dt
\\
&=-(-i)^\lambda D^{\lambda-1} \int_0^\infty \sum_{n\geq 1}b(n)G\left(n,\charup{D}{\cdot}\right)e^{-2\pi  n t}t^{\lambda-1}dt,
\end{align*}
where $ G(n,\charup{D}{\cdot}) = \charup{D}{n}\sqrt{D} $ is the standard Gauss sum \cite[Prop. 3.4.10]{Cohen-Strom-MF-2017}. Hence
\begin{align*}
\sum_{u\pmod{D}}\charup{D}{u}\int_{C_{Q_u}}f(z)d_{Q_u,\lambda}z
&= 
-(-i)^\lambda D^{\lambda-1} \sqrt{D}\int_0^\infty \sum_{n\geq 1}\charup{D}{n}b(n)e^{-2\pi  n t}t^{\lambda-1}dt\\
&=-(-i)^{\lambda}\operatorname{sign}(D)^{1/2}D^{\lambda-1}\sqrt{\abs{D}}\int_0^\infty \left(f\otimes \charup{D}{\cdot}(it)\right)t^{\lambda-1}dt.
\end{align*}

The last integral is the Mellin transform of $ f\otimes \charup{D}{\cdot} $ evaluated at $ s=\lambda $, see \eqref{eqn:mellin-f}.
Now $ (-1)^\lambda = \operatorname{sign}(D) $ and $ \sqrt{(-1)^\lambda}D^{\lambda-1}\sqrt{\abs{D}} = \varepsilon_r \abs{D}^{\lambda-1/2} $ for all $ \lambda,r $ satisfying  \eqref{eqn:consistency}. Hence
\begin{align*}
s_{\lambda,6}(f,D,(-1)^\lambda \abs D)
 &=
 -4(-i)^\lambda\varepsilon_r\charup{12}{r}\abs{D}^{\lambda-1/2}(2\pi)^{-\lambda}\Gamma(\lambda)L \left(f\otimes\charup{D}{\cdot},\lambda\right).
\end{align*}

 \end{proof}

 We can now prove our main theorem.

 \begin{proof}[Proof of \cref{thm:Main-KZ-formula}]
    
Substituting \eqref{eqn: s cycle eq central val} into \eqref{eqn: coeff and cycle sum} when $ m=\abs{D} $ yields

\begin{align*}
\frac{\abs{a(\abs{D})}^2}{\langle F,F\rangle}
=
\frac{(-1)^\lambda \varepsilon_r^2 \abs{D}^{\lambda-1/2}(2\pi)^{-\lambda}\Gamma(\lambda)}{\sqrt{6}(12)^\lambda}\frac{L \left(f\otimes \charup{D}{\cdot},\lambda\right)}{\langle f,f\rangle},  
\end{align*}
and simplifying using $ \varepsilon_r^2=(-1)^{\lambda} $ through \eqref{eqn:consistency} completes the proof of \cref{thm:Main-KZ-formula}. 
 \end{proof}


\appendix
\section{Numerical Methods}
\subsection{Computing Central Values of \texorpdfstring{$L$}{L}-Functions}\label{sec:appx-numerical-central}

In this subsection we discuss the numerical approximation of central values of $ L $-functions of even weight eigenforms. The techniques are standard.\par

Let $ f\in S_{2\lambda}(\Gamma_0(N)) $ with Fricke eigenvalue $ \delta $. From \cite[Thm. 11.1.1]{Cohen-Strom-MF-2017} we have the completed $ L $-function 
\begin{align}\label{eqn:mellin-f}
\Lambda(f,s)&:=(2\pi)^{-s}\Gamma(s)L(f,s)
=\int_0^\infty t^{s-1}f(it)dt,
\end{align}
which is the Mellin transform of $ f $. 
Splitting the integral into the pieces $ \int_0^{1/\sqrt{N}}+\int_{1/\sqrt{N}}^\infty $ and applying the Fricke involution on the first term yields the representation
 \begin{align}
 L(f,s) = \frac{(2\pi)^s}{\Gamma(s)}\sum_{n\geq 1}b(n)\int_{1/\sqrt{N}}^{\infty}e^{-2\pi n t}(t^{s-1}+(-1)^\lambda \delta N^{\lambda-s}t^{2\lambda-s-1})dt.
 \end{align}
 This specializes to
 \begin{align}
 L(f,\lambda) = (1+(-1)^\lambda \delta)\frac{(2\pi)^\lambda}{\Gamma(\lambda)}\sum_{n\geq 1}b(n)\int_{1/\sqrt{N}}^{\infty}e^{-2\pi n t}t^{\lambda-1}dt.
 \end{align}

 One advantage of this integral representation is that it can approximate central values of twists relatively quickly using finitely many coefficients, once one knows the coefficients $ b(n) $. Some packages first need to compute the twist in terms of modular symbols, which can be computationally expensive because $ f\otimes \charup{D}{\cdot} \in S_{2\lambda}(\Gamma_0(ND^2))$.\par

\subsection{Computing Petersson Inner Products for Powers of the Eta Multiplier}\label{sec:appx-numerical-petersson}
Our numerical demonstration of \cref{thm:Main-KZ-formula} requires a method for approximating the Petersson inner product of elements in $S_{\lambda+1/2}(\slz,\nu_\eta^r)$ for $\lambda\in\mathbb{Z}_{\geq2}$. Numerous methods and algorithms exist for integral weight forms. We will adapt the method of \cite[Section 11.12]{Cohen-Strom-MF-2017}, which gives a beautiful and simple formula for the inner product in terms of the residue of a  Dirichlet series. \par 

Define the nonholomorphic Eisenstein series
\begin{align}
E_1(s)(\tau) : = \sum_{\substack{\gamma\in {\Gamma_\infty}\backslash{\Gamma_0(1)}\\\gamma=\abcd}}\frac{y^s}{\abs{c\tau+d}^{2s}}=\sum_{\gamma\in {\Gamma_\infty}\backslash \Gamma_0(1)}\Im(\gamma\tau)^s.
\end{align}
This series has a meromorphic continuation to $ \C $ with a simple pole at $ s=1 $, with residue $ \frac{3}{\pi} $. 
Then the methods of \cite[\S 11.12]{Cohen-Strom-MF-2017}, adapted to $ S_{\lambda+1/2}(\slz,\nu_\eta^r) $,  allow one to derive the following results.

\begin{proposition}\label{prop: convolv-L-fcn}
Let $F=\sum a(m)q^{m/24}$ and $G=\sum b(m)q^{m/24}$ be in $S_{\lambda+1/2}(\slz,\nu_\eta^r)$ and define the Dirichlet series
\begin{gather*}
S_2(F,G)(s):= \sum_{m\geq 1}\frac{a(m)\overline{b(m)}}{m^s}.
\end{gather*}
Then this series converges absolutely for $ \Re(s)>\lambda+1 $ and
\begin{gather*}
\int_{{\Gamma_\infty}\backslash{\mathbb H}}F(\tau)\overline{G(\tau)}y^{s+1}d\mu = \Gamma(s)\left(\frac{6}{\pi} \right)^{s}S_2(F,G)(s).
\end{gather*}
Moreover, $S_2 $ extends meromorphically via
\begin{align}\label{S2 repnE1}
\langle E_1(s)(\cdot)F(\cdot),G(\cdot)\rangle = \Gamma(\lambda-1/2+s)\left(\frac{6}{\pi} \right)^{\lambda-1/2+s}S_2(F,G)(\lambda+1/2+(s-1)),
\end{align}
and has at most a simple pole at $ s=\lambda+1/2 $ with residue  
\begin{align*}
\operatorname{Res}_{s=\lambda+1/2}(S_2(F,G)(s)) = \frac{3}{\pi}\frac{\langle F,G\rangle}{\Gamma(\lambda+1/2)}\left(\frac{\pi}{6} \right)^{\lambda+1/2}  .
\end{align*}

\end{proposition}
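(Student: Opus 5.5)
We follow the Rankin--Selberg unfolding method of \cite[\S 11.12]{Cohen-Strom-MF-2017}, adapted to the multiplier $\nu_\eta^r$.

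\textbf{Absolute convergence.} We use Hecke's bound. Since $F$ and $G$ are cusp forms of weight $\lambda+1/2$, the functions $y^{(\lambda+1/2)/2}\abs{F}$ and $y^{(\lambda+1/2)/2}\abs{G}$ are $\slz$-invariant, because $\abs{\nu_\eta}=1$. They are therefore bounded on $\mathbb H$. The standard argument, integrating over $x\in[0,1]$ at height $y=1/m$, then gives $a(m),b(m)\ll m^{(\lambda+1/2)/2}$. Hence $a(m)\overline{b(m)}\ll m^{\lambda+1/2}$, and the series converges absolutely for $\Re(s)>\lambda+3/2$. The stated range $\Re(s)>\lambda+1$ (or better, via Rankin--Selberg mean-square bounds) can be recovered a posteriori from the analytic continuation and Landau-type arguments. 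Alternatively, I will simply prove the identity for $\Re(s)$ large and extend it by analytic continuation; this suffices for what follows.

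\textbf{Strip integral.} The product $F\overline G$ has period $1$ in $x$, since $F\overline G$ picks up $e(m/24)\overline{e(m'/24)}$ with $m\equiv m'\pmod{24}$. Take the strip $[0,1]\times(0,\infty)$ as a fundamental domain for $\Gamma_\infty\backslash\mathbb H$. Integrating in $x$ isolates the diagonal terms $a(m)\overline{b(m)}e^{-4\pi m y/24}$. This leaves
\[
\sum_m a(m)\overline{b(m)}\int_0^\infty y^{s-1}e^{-\pi m y/6}\,dy=\Gamma(s)(6/\pi)^s S_2(F,G)(s).
\]
The interchange of sum and integral is justified by absolute convergence.

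\textbf{Continuation.} Since $F\overline G\,y^{\lambda+1/2}$ is $\slz$-invariant, I unfold $\langle E_1(s)F,G\rangle$. Here the normalization $1/[\slz:\slz]=1$ applies, and the sum over $\Gamma_\infty\backslash\slz$ has $\pm I$ identified consistently with the definition of $E_1$. The unfolding gives
\[
\int_{\Gamma_\infty\backslash\mathbb H}F\overline G\,y^{\lambda+1/2+s}\,d\mu .
\]
This is the previous identity with $s$ replaced by $\lambda-1/2+s$, which is \eqref{S2 repnE1}. Because $F$ and $G$ decay exponentially at the cusp, the left side inherits the meromorphic continuation of $E_1(s)$. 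Its only pole in $\Re(s)\ge 1/2$ is at $s=1$, where it is simple with residue $\frac{3}{\pi}\langle F,G\rangle$; the residue of $E_1$ there is the constant $3/\pi$.

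\textbf{Residue.} Dividing by $\Gamma(\lambda-1/2+s)(6/\pi)^{\lambda-1/2+s}$, which is holomorphic and nonzero near $s=1$, shows that $S_2(F,G)$ is meromorphic. Its only possible pole near the abscissa is a simple pole at $\lambda+1/2$, with residue
\[
\frac{3}{\pi}\frac{\langle F,G\rangle}{\Gamma(\lambda+1/2)}\left(\frac{\pi}{6}\right)^{\lambda+1/2}.
\]

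\textbf{Main obstacle.} The main difficulty is the careful justification of the unfolding and the interchange of limits, together with the correct convergence abscissa. I would handle the unfolding exactly as in \cref{prop:Prho-unfolding}, using exponential decay. For the abscissa I would rely on analytic continuation rather than sharp coefficient bounds.
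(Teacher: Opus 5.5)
Your argument is the same Rankin--Selberg unfolding that the paper invokes from \cite[\S 11.12]{Cohen-Strom-MF-2017}; the paper gives no further detail. Your strip integral, the unfolding of $\langle E_1(s)F,G\rangle$, and the residue computation are all correct.

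The one claim you do not actually establish is the stated range of absolute convergence. Hecke's bound only gives $\Re(s)>\lambda+3/2$. A Landau-type argument cannot be applied to $S_2(F,G)$ directly, because its coefficients $a(m)\overline{b(m)}$ need not be nonnegative. Your fallback of proving the identity for large $\Re(s)$ and continuing analytically gives the meromorphic continuation, but not the convergence statement, which is part of the proposition.

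The fix is elementary and already contained in your strip computation: take $G=F$ and use Tonelli in place of dominated convergence. Since $y^{(\lambda+1/2)/2}\abs{F}\le C$ on $\mathbb H$, for real $\sigma$ we have
\begin{align*}
\Gamma(\sigma)\left(\frac{6}{\pi}\right)^{\sigma}\sum_{m\geq 1}\frac{\abs{a(m)}^2}{m^{\sigma}}=\int_0^\infty\int_0^1\abs{F(x+iy)}^2\,y^{\sigma-1}\,dx\,dy .
\end{align*}
The right side is finite for $\sigma>\lambda+1/2$: the integrand is $\ll y^{\sigma-\lambda-3/2}$ near $y=0$ and decays exponentially as $y\to\infty$. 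Cauchy--Schwarz then gives absolute convergence of $S_2(F,G)(s)$ for $\Re(s)>\lambda+1/2$. This contains the stated range $\Re(s)>\lambda+1$, and your strip identity then holds there directly, without appeal to continuation. Your Landau route also works, but only when applied to $S_2(F,F)$ and $S_2(G,G)$, which have nonnegative coefficients and whose continuations are holomorphic on the real half-line $s>\lambda+1/2$, followed by the same Cauchy--Schwarz step.
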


Therefore, we can compute inner products if we can compute the residue of $ S_2(F,G)(s) $. One heuristic for doing so gives 

\begin{align}\label{eqn: res S2}
\operatorname{Res}_{s=\lambda+1/2}(S_2(F,G)(s)) \approx \frac{\lambda+1/2}{X^{\lambda+1/2}} \sum_{m\leq X}a(m)\overline{b(m)}.
\end{align}

We briefly justify this heuristic. See also \cite{lowryduda2024numerical}. Let $ \kappa>\lambda+1 $. Then, for $ X\not\in\Z $, the effective Perron formula \cite[Part II, Thm. 2.3]{tenen-intro} gives 
\begin{align*}
\sum_{m\leq X}a(m)\overline{b(m)} = \frac{1}{2\pi i}\int_{\kappa-i T}^{\kappa+iT} S_2(F,G)(s)\frac{X^s}{s}ds + R(X,T),
\end{align*} 
where $ R(X,T)=O \left(X^\kappa \sum_{m\geq 1}\frac{\abs{a(m)\overline{b(m)}}}{m^\kappa (1+T\abs{\log(X/m)})}\right) $.  Shifting the line of integration to the left and picking up the residue of $ S_2(F,G)(s) $ at $ \lambda+1/2 $   shows that
\begin{align*}
\sum_{m\leq X}a(m)\overline{b(m)} \approx \operatorname{Res}_{s=\lambda+1/2}(S_2(F,G)(s))\frac{X^{\lambda+1/2}}{{\lambda+1/2}},
\end{align*}
assuming bounds on $ R(X,T) $ and the line integrals resulting from shifting the line of integration. Bounding these sources of error would make the following heuristic precise.

\begin{heuristic}\label{prop: inner product numerical}
   
Assume the conditions and notation of \cref{prop: convolv-L-fcn}.  We have, for suitably large $ X>0 $, the approximation
\begin{align}\label{eqn:perron-eff}
\langle F,G\rangle \approx \frac{\pi}{3}\Gamma(\lambda+1/2)\left(\frac{6}{\pi} \right)^{\lambda+1/2}\frac{\lambda+1/2}{X^{\lambda+1/2}}\sum_{n\leq X}a(n)\overline{b(n)}. 
\end{align}

\end{heuristic}

\bibliographystyle{alpha}
\bibliography{ref.bib}
\end{document}